\theoremstyle{definition}
\newtheorem* {theorem*}{Theorem}
\newtheorem{theorem}{Theorem}[section]
\theoremstyle{definition}
\newtheorem* {example*}{Example}
\newtheorem{lemma}[theorem]{Lemma}
\theoremstyle{definition}
\theoremstyle{definition}
\newtheorem* {notation}{Notation}
\newtheorem{proposition}[theorem]{Proposition}
\newtheorem{corollary}[theorem]{Corollary}
\newtheorem* {remark}{Remark}
\theoremstyle{definition}
\newtheorem {example}[theorem]{Example}
\theoremstyle{definition}
\theoremstyle{definition}
\theoremstyle{definition}
\numberwithin{equation}{section}
\def\modu{\ (\mathrm{mod}\ }
\def\({\left(}
\def\){\right)}
       \newcommand{\QQ}{\mathbb{Q}}
\newcommand{\cD}{\mathcal{D}}
\def\NN{\mathbb{N}}
    \def\ZZ{\mathbb{Z}} \def\Aut{\mathrm{Aut}}
\DeclareSymbolFont{bbold}{U}{bbold}{m}{n}
\DeclareSymbolFontAlphabet{\mathbbold}{bbold}
\def\One{\mathbbold{1}}
\def\barr{\begin{array}}
\def\earr{\end{array}}
\def\ba{\begin{aligned}}
\def\ea{\end{aligned}}
\def\be{\begin{equation}}
\def\ee{\end{equation}}
\def\qquand{\qquad\text{and}\qquad}
\def\I{\mathbf{I}}
\def\omdef{\overset{\mathrm{def}}}
\def\ds{\displaystyle}
\newcommand{\id}{\operatorname{id}}
\def\ben{\begin{enumerate}}
\def\een{\end{enumerate}}
\def\inv{\mathrm{Inv}}
\def\Imin{\Omega}
\newcommand{\ellinv}[1]{\ell^{#1}}
\def\ellprime{\ell' }
\renewcommand{\@makefnmark}{\mbox{\textsuperscript{}}}
\begin{document}
\title{Variations of the Poincar\'e series for affine Weyl groups and $q$-analogues of Chebyshev polynomials}

\author{
   Eric Marberg\footnote{This author was supported through a fellowship from the National Science Foundation.}
\\
Department of Mathematics \\
Stanford University \\
{\tt emarberg@stanford.edu}
    \and
   Graham White
   \\
Department of Mathematics \\
Stanford University \\
{\tt grwhite@math.stanford.edu}
}

\date{}

\maketitle

\begin{abstract}
Let $(W,S)$ be a Coxeter system and write $P_W(q)$ for its Poincar\'e series.  Lusztig has shown that  the quotient $P_W(q^2)/P_W(q)$  is equal to a certain power series  $L_{W}(q)$,   defined by specializing one variable in the    generating function recording the lengths and  absolute lengths of the involutions in $W$.
The simplest inductive method of proving this result for finite Coxeter groups  suggests 
a  natural bivariate generalization $L^J_W(s,q) \in \ZZ[[s,q]]$  depending on a subset  $J\subset S$.
This new power series specializes to $L_W(q)$ when $s=-1$ and is given explicitly by a   sum of rational functions over the involutions which are minimal length representatives of the double cosets  of the  parabolic subgroup $W_J$ in  $W$.
When $ W$ is an affine Weyl group, we consider the renormalized power series  $T_{ W}(s,q) = L^J_W(s,q) / L_W(q)$ with $J$ given by the generating set of the corresponding finite Weyl group.
We show that when  $W$ is an affine Weyl group of type $A$, the power series $T_W(s,q)$  is actually a polynomial in $s$ and $q$ with nonnegative coefficients, which turns out to be a $q$-analogue recently studied by Cigler of the Chebyshev polynomials of the first kind, arising in a completely different context. 
\end{abstract}

\setcounter{tocdepth}{2}
\tableofcontents

\section{Introduction}

\subsection{Background and motivation}\label{sect1} 
 
Let $(W,S)$ be a Coxeter system with length function $\ell : W \to \NN$.  The \emph{Poincar\'e series} of $(W,S)$ is  the formal power series (in an indeterminate $q$) given by
\[ P_W(q)  = \sum_{w \in W} q^{\ell(w)} \in \ZZ[[q]].\]
This power series is well-defined if and only if the rank  of $(W,S)$ is finite, and in this work, therefore, we require all Coxeter systems  $(W,S)$ to have $|S| < \infty$.
If $W$ is finite then $P_W(q)$ is obviously a polynomial, and in general  $P_W(q)$ is always a rational power series; see 
 \cite{Hu,Steinberg}.

Lusztig \cite{LV2,L6} has introduced an interesting analogue of the Poincar\'e series defined in terms of the twisted involutions in a Coxeter group, and our main object of study here
is a natural bivariate generalization of this power series. To motivate its definition, we review some relevant  information from \cite{LV2,L6}.

To begin, 
let $\Aut(W,S)$ denote the group of automorphisms of $W$ preserving $S$, and fix an involution (that is, a self-inverse automorphism) $* \in \Aut(W,S)$. We denote the action of $*$ on  elements $w \in W$ by $w^*$,
and write
\[ \I_* = \I_*(W) \omdef= \{ w \in W : w^{-1} = w^*\}\]
for the corresponding set of \emph{twisted involutions} in $W$.
The ``twisted'' analogue of $P_W(q)$ is  the formal power series
\be
\label{Ldef} L_{W,*}(q) = \sum_{w \in \I_*} q^{\ell(w)} \(\tfrac{q-1}{q+1}\)^{\ellinv{*}(w)} \in \ZZ[[q]]
\ee
where on the right  side 
 $\ellinv{*}$ 
 denotes the \emph{twisted absolute length function} defined by Hultman in \cite{H1}, which is characterized explicitly as the unique map $\I_* \to \NN$
 such that
 \ben
\item[(a)] $\ellinv{*}(1) = 0$;
\item[(b)] $\ellinv{*}$ is constant on $*$-twisted conjugacy classes, i.e., $\ellinv{*}(sws^*) = \ellinv{*}(w)$ for all $s \in S$;
\item[(c)] $\ellinv{*}(ws) - \ellinv{*}(w) = \ell(ws)-\ell(w)$ whenever $s \in S$ and $w \in \I_*$ are such that $ws \in \I_*$.
\een
Note  in (c) that $ws \in \I_*$ if and only if $ws=s^*w$. The function $\ellinv{*}$ 
is the same as the map denoted   $\phi$  in \cite{LV2,L6}.
Lusztig's paper \cite[\S5.8]{LV2}  appears to be the first place in the literature where the power series \eqref{Ldef} is considered, and for this reason we denote it by the letter $L$.

\begin{notation}
When $*$ is the identity automorphism (that is, $*=\id$) we abbreviate by setting 
\[
\I(W) = \I_{\id}(W) 
\qquand
\ellprime = \ellinv{\id}
\qquand
L_W(q) = L_{W,\id}(q).
\]
We will repeat this convention with a few subsequent definitions. 
 \end{notation}
 
The map $\ellprime : \I(W) \to \NN$   is  often called the \emph{absolute length function} of $W$. The value of $\ellprime$ at $w$ gives
 the minimum number of reflections whose product is $w$, and is also (when $w^2=1$)  the dimension of the $-1$-eigenspace of $w$ in the geometric representation of $(W,S)$; see \cite{Dyer}. 
 In the case when $W$ is a classical Weyl group, Incitti \cite{Incitti1,Incitti2} has derived explicit  formulas for $\ellprime$.

\begin{example}
The symmetric group $S_3 =\langle s_1,s_2\rangle$ is a Coxeter group with simple generators given by the transpositions $s_1 = (1,2)$ and $s_2=(2,3)$, and contains four involutions given by $1$, $s_1$, $s_2$, and $s_1s_2s_1 = s_2s_1s_2$, on which  $\ell$ and $\ellprime$ have values $0,1,1,3$ and $0,1,1,1$. Thus
\[
L_{S_3}(q) = 1 +2 q\cdot \tfrac{q-1}{q+1}  + q^3\cdot \tfrac{q-1}{q+1} = \tfrac{(1+q^2)(1-q+q^2)}{(1+q)}.
\]
\end{example}

\begin{example}
The  product $W\times W$ is a Coxeter group
relative to the generators $ (S\times \{1\}) \cup (\{ 1\} \times S)$.
For the automorphism $\tau \in \Aut(W\times W)$ with $(x,y) \mapsto (y,x)$, it holds that
 \[L_{W\times W,\tau}(q) = P_W(q^2)\]
since
$\I_\tau(W\times W) = \{ (w,w^{-1}) : w \in W\}$
is the $\tau$-twisted conjugacy class of $(1,1)\in W\times W$.
\end{example}

There is a common generalization of the formulas in these examples.
Define $F_{W,*}(q)$ as the length generating function of the set of fixed points of $*$ in $W$, so that
\[ F_{W,*}(q) = \sum_{\substack{w \in W\\ w=w^*}}q^{\ell(w)} \in \ZZ[[q]].\]
This power series may be realized as a special case of a multivariate generalization of the Poincar\'e series introduced by MacDonald, and so is always rational; see \cite[\S1.2]{MacDonald}.
Lusztig proves the following in \cite{L6}, which shows that $L_{W,*}(q)$  is   also rational:

\begin{theorem}[Lusztig \cite{L6}] \label{L6-thm} If $W$ is any Coxeter group then 
$L_{W,*}(q) = P_{W}(q^2) / F_{W,*}(q)$. 
\end{theorem}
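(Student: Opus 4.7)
The plan is to prove the equivalent identity
\[
L_{W,*}(q)\cdot F_{W,*}(q) = P_W(q^2)
\]
by induction on the rank $|S|$. Since, for any $N$, the $q^N$-coefficient on either side depends only on elements of length at most $N$ and hence only on a finite-rank $*$-stable parabolic subgroup of $W$, the general case reduces to the case in which $W$ is finite. The base case $|S|=0$ is trivial, with both sides equal to $1$.

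For the inductive step, I would pick a $*$-stable proper subset $J\subsetneq S$, which always exists (take $J$ to be the complement of a single $*$-orbit on $S$). The parabolic subgroup $W_J$ is then $*$-stable and of strictly smaller rank. The standard parabolic factorization $P_W(q) = P_{W_J}(q)\cdot \sum_{u\in W^J}q^{\ell(u)}$, where $W^J$ denotes the set of minimal coset representatives of $W_J\backslash W$, gives after substituting $q\mapsto q^2$ the identity
\[
P_W(q^2) = P_{W_J}(q^2)\cdot \sum_{u\in W^J}q^{2\ell(u)}.
\]
Since $J=J^*$ forces $W^J$ to be $*$-stable, and the decomposition $w=uv$ with $u\in W^J$, $v\in W_J$ is unique, one has $w=w^*$ if and only if $u=u^*$ and $v=v^*$, which yields the matching factorization
\[
F_{W,*}(q) = F_{W_J,*}(q)\cdot \sum_{u\in W^J,\,u=u^*}q^{\ell(u)}.
\]
Combined with the inductive hypothesis $P_{W_J}(q^2) = L_{W_J,*}(q)\,F_{W_J,*}(q)$, the theorem reduces to the single identity
\[
L_{W,*}(q) \;=\; L_{W_J,*}(q)\cdot \frac{\sum_{u\in W^J}q^{2\ell(u)}}{\sum_{u\in W^J,\,u=u^*}q^{\ell(u)}}.
\]

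The main obstacle is proving this last factorization, which is where the structure of twisted involutions genuinely enters. The natural strategy is to partition $\I_*(W)$ by the $(W_J,W_J)$-double cosets: because $J=J^*$, a double coset $W_J d W_J$ meets $\I_*(W)$ precisely when its distinguished minimal-length representative $d$ is compatible with $*$ (in the sense that $d^{-1}=d^*$ at the level of double cosets), and in that case the members of $\I_*(W)\cap W_J dW_J$ are parametrized by twisted involutions in a smaller Coxeter system attached to $d$ (the stabilizer of $d$ under the twisted $W_J$-action $v\cdot x = v\, x\, v^{*-1}$). The key technical step is to show that both $\ell$ and Hultman's twisted absolute length $\ellinv{*}$ are additive along this parametrization, using the parabolic length formula $\ell(uv)=\ell(u)+\ell(v)$ for $u\in W^J$, $v\in W_J$ together with Hultman's axiomatic characterization (properties (a)--(c) in the excerpt). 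Once additivity is established, summing the weight $q^{\ell(w)}((q-1)/(q+1))^{\ellinv{*}(w)}$ first internally within each double coset and then over the distinguished representatives produces exactly the ratio displayed above. This parabolic decomposition of $\I_*(W)$ over double cosets is precisely what motivates the bivariate generalization $L^J_W(s,q)$ introduced later in the paper.
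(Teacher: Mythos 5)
First, a point of comparison: the paper does not prove this theorem at all --- it is quoted from Lusztig's \cite{L6}, and the only related material in the paper is Lemma \ref{induct-lem} (the double-coset decomposition you invoke) together with the Section \ref{results-sect} computations, which the authors note yield an independent proof only in the single case $W=\tilde S_n$, $*=\id$. So your proposal must stand on its own, and it has two problems. The first is the claimed reduction to finite $W$: the paper already assumes $|S|<\infty$, and for an infinite Coxeter group of finite rank (the infinite dihedral group, or any affine Weyl group) the elements of length at most $N$ do \emph{not} lie in any proper parabolic subgroup, so the coefficient-of-$q^N$ argument reduces nothing. This misstep happens to be harmless only because your induction on $|S|$ never actually uses finiteness of $W$.

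The fatal gap is the last displayed identity, $L_{W,*}(q) = L_{W_J,*}(q)\cdot \bigl(\sum_{u\in W^J}q^{2\ell(u)}\bigr)/\bigl(\sum_{u\in W^J,\,u=u^*}q^{\ell(u)}\bigr)$, which you assert but do not prove. The double-coset decomposition you describe is exactly Lemma \ref{induct-lem}, and what it yields is
\[
L_{W,*}(q)=\sum_{\cD} q^{\ell(w_\cD)}\Bigl(\tfrac{q-1}{q+1}\Bigr)^{\ellinv{*}(w_\cD)}\tfrac{P_{J}(q^2)}{P_{K}(q^2)}\,L_{K,\diamond}(q),
\]
a sum over the double cosets $\cD$ meeting $\I_*$. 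Even after applying the inductive hypothesis to each $L_{K,\diamond}(q)=P_K(q^2)/F_{K,\diamond}(q)$ (legitimate, since $K\subset J\subsetneq S$), you are left needing
\[
\sum_{\cD} q^{\ell(w_\cD)}\Bigl(\tfrac{q-1}{q+1}\Bigr)^{\ellinv{*}(w_\cD)}\tfrac{1}{F_{K_\cD,\diamond_\cD}(q)}
=\frac{\sum_{u\in W^J}q^{2\ell(u)}}{F_{J,*}(q)\cdot\sum_{u\in W^J,\,u=u^*}q^{\ell(u)}},
\]
where the left side runs over the minimal-length twisted involutions $w_\cD\in\I_*\cap W^J$ and still carries the weight $\bigl(\tfrac{q-1}{q+1}\bigr)^{\ellinv{*}(w_\cD)}$. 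Relating a $\ellinv{*}$-weighted sum over twisted involutions to the two unweighted coset sums on the right is not an instance of the inductive hypothesis for any parabolic subgroup --- it is the whole content of the theorem resurfacing one level down, and "summing internally and then over representatives" does not make the absolute-length weights disappear. The paper's Sections 2.2--2.5 are a measure of how nontrivial this step is: establishing it for the single case $W=\tilde S_n$, $J=S$, $*=\id$ requires the explicit parametrization of $\Imin_n$ by antisymmetric sequences, the length and absolute-length formulas of Propositions \ref{ell*-prop} and \ref{ellOmega-prop}, and the $q$-series identity of Proposition \ref{csumfixed-prop}. Lusztig's actual proof in \cite{L6} proceeds by an entirely different inductive mechanism internal to $\I_*$, not by parabolic descent.
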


\begin{remark} The results in this paper provide an independent, self-contained proof of this theorem in the special case when $W$ is an affine Weyl group of type $A$ and $*=\id$.
\end{remark}

This theorem leads to  explicit formulas for $L_W(q)$ when $W$ is a finite or affine Weyl group, once we recall the well-known factorization of the Poincar\'e series in these cases.
Assume that $W$ is finite with rank $n=|S|$.  If $V$ is the geometric representation of $W$ (see, e.g.,  \cite[\S2.4]{Carter}) then the ring of $W$-invariants in the polynomial algebra $\mathrm{Sym}(V^*)$ is itself a polynomial ring, which is minimally generated by $n$ homogeneous polynomials, whose degrees are uniquely determined up to permutation. 
These numbers are the \emph{degrees} of the basic polynomial invariants of $W,$ which we denote  as $d_1,d_2,\dots,d_n \in \NN$. If $(W,S)$ is a Coxeter system of type $A_n$ (so that $W=S_{n+1}$), then $d_i = i+1$.
The following statement was first established for Weyl groups by Chevalley, and later given a uniform proof for all finite Coxeter groups by Solomon.

\begin{theorem}[Chevalley \cite{Chevalley3}, Solomon \cite{Solomon3,Solomon4}] \label{Pfactor-thm}
If $W$ is finite then $P_W(q) = \prod_{i=1}^n \frac{1-q^{d_i}}{1-q}$.
\end{theorem}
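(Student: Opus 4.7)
The plan is to apply the Chevalley-Shephard-Todd theorem on invariants of finite reflection groups to the geometric representation $V$ of $W$, and then compare Hilbert series in the polynomial ring $R = \mathrm{Sym}(V^*)$.

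By the definition of the integers $d_i$, the invariant subring $R^W$ is a polynomial ring on $n$ algebraically independent homogeneous generators $f_1,\ldots,f_n$ of degrees $d_1,\ldots,d_n$, so
\[
\mathrm{Hilb}(R;q) = \frac{1}{(1-q)^n} \qquand \mathrm{Hilb}(R^W;q) = \prod_{i=1}^n \frac{1}{1-q^{d_i}}.
\]
The remainder of the Shephard-Todd theorem asserts that $R$ is a free graded $R^W$-module of rank $|W|$; consequently the coinvariant algebra $R_W := R/(f_1,\ldots,f_n)R$ has Hilbert series
\[
\mathrm{Hilb}(R_W;q) \;=\; \frac{\mathrm{Hilb}(R;q)}{\mathrm{Hilb}(R^W;q)} \;=\; \prod_{i=1}^n \frac{1-q^{d_i}}{1-q}.
\]

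It remains to establish the independent identity $\mathrm{Hilb}(R_W;q) = P_W(q)$. When $W$ is a Weyl group, this follows from Borel's isomorphism $R_W \cong H^*(G/B;\mathbb{C})$ combined with the Bruhat cell decomposition of the flag variety $G/B$: the Schubert cell indexed by $w\in W$ has complex dimension $\ell(w)$ and contributes to $H^{2\ell(w)}(G/B;\mathbb{C})$, so the polynomial-degree Hilbert series of $R_W$ agrees term-by-term with $\sum_{w\in W} q^{\ell(w)}$. For arbitrary finite Coxeter groups one instead invokes Solomon's graded-character theorem for the coinvariant algebra; extracting the multiplicity of the trivial representation in each homogeneous piece and simplifying the resulting Frobenius-type identity yields $\mathrm{Hilb}(R_W;q) = P_W(q)$.

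The main obstacle is precisely this final identification. The Hilbert-series computation above is routine once the Chevalley-Shephard-Todd structure theorem is granted, but matching polynomial degrees in $R_W$ with Coxeter lengths in $W$ is the conceptually deeper ingredient and requires either Lie-theoretic geometry (for Weyl groups) or Solomon's more delicate invariant-theoretic analysis (for the general case).
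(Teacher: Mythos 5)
First, a point of reference: the paper offers no proof of this statement at all. It is quoted as a classical theorem with citations to Chevalley and Solomon, so there is no internal argument to compare yours against; your proposal has to stand on its own.

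Your reduction via Chevalley--Shephard--Todd is correct as far as it goes: the Hilbert series computation and the identification $\mathrm{Hilb}(R_W;q)=\mathrm{Hilb}(R;q)/\mathrm{Hilb}(R^W;q)$ from the freeness of $R$ over $R^W$ are standard, and the Borel/Bruhat-cell argument is a legitimate (if heavy) proof of $\mathrm{Hilb}(R_W;q)=P_W(q)$ when $W$ is a Weyl group. The genuine gap is in the general case. The theorem is asserted for every finite Coxeter group, including the non-crystallographic ones $H_3$, $H_4$, $I_2(m)$, where there is no flag variety, and the fallback you describe does not work: the graded multiplicity of the trivial representation in the coinvariant algebra is concentrated in degree $0$. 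Indeed, from the graded $W$-module isomorphism $R\cong R^W\otimes R_W$ one gets $\mathrm{Hilb}(R^W;q)=\mathrm{Hilb}(R^W;q)\cdot\sum_j \dim\bigl((R_W)_j^W\bigr)q^j$, so $\sum_j \dim\bigl((R_W)_j^W\bigr)q^j=1$; extracting trivial-isotypic multiplicities therefore yields the constant $1$, not $P_W(q)$. More fundamentally, no manipulation of the graded character of $R_W$ alone can produce the length statistic, since Coxeter length is not visible in the invariant theory without further input. A uniform proof of $\mathrm{Hilb}(R_W;q)=P_W(q)$ (equivalently of the theorem) requires a different ingredient, e.g.\ Solomon's inductive argument combining the identity $\sum_{J\subseteq S}(-1)^{|J|}P_W(q)/P_{W_J}(q)=q^{\ell(w_0)}$ with the analogous identity for the products $\prod_{i}[d_i]_q$ obtained from Molien's formula, and induction on the rank. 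As written, your argument establishes the theorem only in the crystallographic case.
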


If $W$ is a finite Weyl group, then 
there is an associated \emph{affine Weyl group}  $\tilde W$,
which is a semidirect product of $W$ with the group of translations corresponding to the coroot lattice in the geometric representation $V$ of $W$; see  \cite[\S4.2]{Hu} for the precise definition.
 When $(W,S)$ is the Coxeter system of an irreducible Weyl group,  a Coxeter system  for the associated affine Weyl group is given by $(\tilde W ,\tilde S)$, where $\tilde S  = S \cup \{ s_0\}$ with $s_0$ denoting the affine reflection in $V$ through a hyperplane normal to the highest root for $W$.
  There is a factorization of $P_{\tilde W}(q)$ analogous to Theorem \ref{Pfactor-thm}, due to Bott \cite{Bott} (see also \cite{Eriksson,Reiner}). 

\begin{theorem}[Bott \cite{Bott}] \label{bott-thm} If $W$ is a finite Weyl group then $ P_{\tilde W}(q) =  \prod_{i=1}^n \frac{1-q^{d_i}}{(1-q)(1-q^{d_i-1})}$.
\end{theorem}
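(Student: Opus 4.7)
The plan is to reduce Bott's formula to an identity about the coroot lattice by exploiting the semidirect product structure of $\tilde W$. Every affine Weyl group decomposes as $\tilde W \cong Q^\vee \rtimes W$, where $Q^\vee$ denotes the coroot lattice of $W$, acting on itself by translations $t_\lambda$. Thus each element of $\tilde W$ has a unique expression as $t_\lambda \cdot w$ with $\lambda \in Q^\vee$ and $w \in W$, and the left cosets $\tilde W / W$ are naturally indexed by $\lambda \in Q^\vee$.

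First I would establish the key structural lemma: each coset $t_\lambda W$ contains a unique minimum length representative $m_\lambda$, and this representative satisfies
\[
\ell(m_\lambda \cdot w) = \ell(m_\lambda) + \ell(w) \qquad \text{for all } w \in W.
\]
This is standard from the fact that $S \subset \tilde S$, so $W$ is a standard parabolic of $\tilde W$ and the general theory of minimum-length coset representatives applies. Granted this, summing $q^{\ell(x)}$ first over $w \in W$ and then over $\lambda \in Q^\vee$ factors the Poincar\'e series as
\[
P_{\tilde W}(q) \;=\; M(q) \cdot P_W(q), \qquad M(q) \;\omdef\; \sum_{\lambda \in Q^\vee} q^{\ell(m_\lambda)}.
\]
By Theorem \ref{Pfactor-thm}, the desired identity then reduces to showing
\[
M(q) \;=\; \prod_{i=1}^n \frac{1}{1 - q^{d_i - 1}},
\]
i.e., that the ``affine length'' generating function on $Q^\vee$ is controlled by the exponents $e_i = d_i - 1$ of $W$.

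For the evaluation of $M(q)$, my approach would be to decompose $Q^\vee$ into $W$-orbits, each represented by a unique dominant coweight $\mu$. For $\lambda$ in the orbit $W\mu$, the minimum-length representative $m_\lambda$ has a computable length in terms of $\mu$ and the minimum length element in the $W_\mu$-coset carrying $\mu$ to $\lambda$. One then derives an orbit-summed identity
\[
\sum_{\lambda \in W\mu} q^{\ell(m_\lambda)} \;=\; q^{\langle \mu,\, 2\rho\rangle} \cdot \frac{P_W(q)}{P_{W_\mu}(q)},
\]
where $W_\mu$ is the stabilizer of $\mu$ (a standard parabolic of $W$), and sums over dominant $\mu \in Q^\vee$.

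The main obstacle is the resulting identity
\[
\sum_{\mu \in Q^\vee \cap \overline{C}_+} q^{\langle \mu,\, 2\rho\rangle} \cdot \frac{P_W(q)}{P_{W_\mu}(q)} \;=\; \prod_{i=1}^n \frac{1}{1 - q^{d_i - 1}},
\]
which encodes how the dominant coweight lattice, graded by $\langle \cdot,\, 2\rho\rangle$, interacts with the parabolic subgroup structure to reproduce the exponents. This is the deepest step: it can be handled either by a case-by-case check in each irreducible type, or, more conceptually, by interpreting $M(q)$ as the graded dimension of a coinvariant-type algebra for the action of $W$ on $Q^\vee$ and invoking Chevalley's theorem (which is essentially how Bott originally obtained the formula as a by-product of his computation of the Poincar\'e series of the loop space of a compact Lie group). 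A cleaner combinatorial route is the Macdonald constant term identity, which directly yields this sum in closed form.
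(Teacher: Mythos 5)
First, note that the paper does not prove this statement at all: it is quoted as a classical theorem of Bott with a citation to \cite{Bott} (and to \cite{Eriksson,Reiner} for other proofs), so there is no in-paper argument to compare yours against. Your overall reduction is the standard one and is sound as far as it goes: since $W=W_S$ is a standard parabolic of $\tilde W$ and $\tilde W/W$ is indexed by the coroot lattice $Q^\vee$, the factorization $P_{\tilde W}(q)=M(q)\cdot P_W(q)$ with $M(q)=\sum_{\lambda\in Q^\vee}q^{\ell(m_\lambda)}$ is immediate, and by Theorem \ref{Pfactor-thm} everything reduces to $M(q)=\prod_{i=1}^n(1-q^{d_i-1})^{-1}$.

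However, there are two genuine problems with the rest. First, your orbit-sum formula is wrong as stated: already for the infinite dihedral group $\tilde S_2$ (type $A_1$), the orbit $\{k\alpha^\vee,-k\alpha^\vee\}$ contributes minimal coset representatives of lengths $2k-1$ and $2k$, so the orbit sum is $q^{2k-1}(1+q)$, whereas $q^{\langle \mu,2\rho\rangle}\cdot P_W(q)/P_{W_\mu}(q)=q^{2k}(1+q)$. The exponent needs a correction of the form $\langle\mu,2\rho\rangle-\bigl(\ell(w_0)-\ell(w_{0,\mu})\bigr)$ (longest elements of $W$ and of the stabilizer $W_\mu$), and since this correction depends on $\mu$ through its stabilizer it cannot be absorbed into a global normalization; consequently the ``main obstacle'' identity you write down is actually false (in type $A_1$ its left side is $\tfrac{1-q+q^2}{1-q}$, not $\tfrac{1}{1-q}$). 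Second, even after fixing that, the step that carries all the content --- that the corrected sum over dominant coweights equals $\prod_i(1-q^{e_i})^{-1}$ with $e_i=d_i-1$ the exponents --- is not proved but deferred to a case-by-case check, to Macdonald's constant term identity, or to Bott's loop-space computation, each of which is at least as deep as the theorem itself. So the proposal is a reasonable roadmap to the standard proof, but it contains an incorrect intermediate formula and does not actually close the argument.
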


These classical formulas imply the following corollaries of Theorem \ref{L6-thm}. 
It becomes clear that there is some interesting analogy between $P_W(q)$ and $L_{W,*}(q)$ on seeing these similar factorizations.

\begin{corollary}
\label{+cor} 
If $W$ is finite then $L_W(q) =\prod_{i=1}^n \frac{1+q^{d_i}}{1+q}$.
\end{corollary}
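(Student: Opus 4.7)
The plan is to derive this as an immediate consequence of Theorem \ref{L6-thm} and Theorem \ref{Pfactor-thm}, with no additional ingredients needed beyond the elementary factorization $1-q^{2d_i} = (1-q^{d_i})(1+q^{d_i})$.

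First I would specialize Theorem \ref{L6-thm} to the case $*=\id$. Since every element of $W$ is fixed by the identity automorphism, the power series $F_{W,\id}(q)$ is literally the Poincar\'e series $P_W(q)$. Thus Theorem \ref{L6-thm} collapses to the identity $L_W(q) = P_W(q^2)/P_W(q)$, which is the ``simplest inductive case'' alluded to in the abstract and recovers the statement attributed to Lusztig at the very beginning of the introduction.

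Next I would substitute the Chevalley--Solomon factorization from Theorem \ref{Pfactor-thm} into both numerator and denominator:
\[
\frac{P_W(q^2)}{P_W(q)} = \prod_{i=1}^n \frac{(1-q^{2d_i})/(1-q^2)}{(1-q^{d_i})/(1-q)} = \prod_{i=1}^n \frac{(1-q^{2d_i})(1-q)}{(1-q^2)(1-q^{d_i})}.
\]
Factoring $1-q^{2d_i} = (1-q^{d_i})(1+q^{d_i})$ and $1-q^2 = (1-q)(1+q)$ and cancelling yields the desired product $\prod_{i=1}^n \frac{1+q^{d_i}}{1+q}$.

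There is no real obstacle here, as both required theorems appear verbatim in the excerpt and the remaining manipulation is a one-line cancellation. The only minor point worth flagging is that one must verify $F_{W,\id} = P_W$, but this is immediate from the definition since the fixed-point set of the identity is all of $W$.
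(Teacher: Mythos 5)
Your proposal is correct and is exactly the derivation the paper intends: the corollary is stated as a consequence of Theorem \ref{L6-thm} with $*=\id$ (so that $F_{W,\id}(q)=P_W(q)$) combined with the Chevalley--Solomon factorization of Theorem \ref{Pfactor-thm}, followed by the cancellation $\frac{1-q^{2d_i}}{1-q^{d_i}}=1+q^{d_i}$. No further comment is needed.
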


\begin{corollary} \label{Wtilde-cor}
If $W$ is a finite Weyl group then $ L_{\tilde W}(q) =   \prod_{i=1}^n \frac{1+q^{d_i}}{(1+q)(1+q^{d_i-1})}$.
\end{corollary}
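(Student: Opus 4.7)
The plan is to deduce this directly from Theorem~\ref{L6-thm} and Bott's formula (Theorem~\ref{bott-thm}), just as Corollary~\ref{+cor} should follow from Theorem~\ref{L6-thm} and Theorem~\ref{Pfactor-thm}.

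First, I specialize Theorem~\ref{L6-thm} to $* = \id$. Since every element of any Coxeter group is fixed by the identity automorphism, $F_{\tilde W,\id}(q) = P_{\tilde W}(q)$. Hence
\[
L_{\tilde W}(q) \;=\; \frac{P_{\tilde W}(q^2)}{P_{\tilde W}(q)}.
\]
Next, I substitute Bott's factorization, applied both at $q$ and at $q^2$, which yields
\[
L_{\tilde W}(q) \;=\; \prod_{i=1}^n \frac{1-q^{2 d_i}}{(1-q^2)(1-q^{2(d_i-1)})}\cdot \frac{(1-q)(1-q^{d_i-1})}{1-q^{d_i}}.
\]
Finally, I simplify each factor using $1 - x^2 = (1-x)(1+x)$ applied to $x = q$, $x = q^{d_i}$, and $x = q^{d_i-1}$. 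Every factor of the form $1-q^{d_i-1}$, $1-q^{d_i}$, and $1-q$ cancels, leaving exactly
\[
L_{\tilde W}(q) \;=\; \prod_{i=1}^n \frac{1+q^{d_i}}{(1+q)(1+q^{d_i-1})},
\]
as desired.

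There is essentially no obstacle: the only thing to check is that no $d_i$ equals $1$, so that the $(1-q^{d_i-1})$ denominator in Bott's formula is nonzero and the cancellation makes sense. This is immediate because the smallest degree $d_i$ of any finite irreducible Weyl group is $2$. The argument extends to reducible finite Weyl groups by taking direct products of the corresponding affine Weyl groups, under which both sides are multiplicative.
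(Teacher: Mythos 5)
Your proof is correct and follows exactly the route the paper intends: the paper derives Corollary~\ref{Wtilde-cor} by combining Theorem~\ref{L6-thm} (with $*=\id$, so $F_{\tilde W,\id}(q)=P_{\tilde W}(q)$ and $L_{\tilde W}(q)=P_{\tilde W}(q^2)/P_{\tilde W}(q)$) with Bott's factorization in Theorem~\ref{bott-thm}, precisely as you do. Your added remarks about $d_i\geq 2$ and multiplicativity over irreducible factors are sensible bookkeeping that the paper leaves implicit.
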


Lusztig proves the first of these corollaries as \cite[Proposition 5.9]{LV2} by a direct inductive argument. This method, when adapted to  infinite Coxeter groups, suggests the definition of  a bivariate analogue of $L_{W,*}(q)$ 
which turns out to be an interesting object on its own. 
To introduce this, 
let   $J\subset S$ be any subset, write  $W_J = \langle J\rangle$ for the corresponding standard parabolic subgroup of $W$, and let 
$W^J = \{ w \in W : \ell(ws)> \ell(w) \text{ for all }s \in J\}$
denote the associated set of minimal left coset representatives. If $K \subset  J$ then we write $W_J^K $ for $(W_J)^K$.

Now, writing $P_J(q)$, $F_{J,*}(q)$, and $L_{J,*}(q)$ in place of $P_{W_J}(q)$, $F_{W_J,*}(q)$, and $L_{W_J,*}(q)$ to avoid excessive subscripts,
we define 
\be\label{LJ-def} L^J_{W,*}(s,q) = \sum_{ (w,\diamond,K)  } \(q^{\ell(w)} \cdot \(s \cdot \tfrac{1-q}{1+q}\)^{\ellinv{*}(w)} \cdot \tfrac{P_{J}(q^2)}{P_{K}(q^2)} \cdot L_{K,\diamond}(q)\) \in \ZZ[[s,q]] \ee
where the sum is over all triples $(w,\diamond,K) \in W \times \Aut(W) \times 2^S$ 
with
\be\label{cosetdata} w \in \I_* \cap W^J \qquand \diamond : x \mapsto  w\cdot x^*\cdot w^{-1} \qquand K = J \cap J^\diamond.\ee
We say that a triple $(w,\diamond,K)$ of this form is  a \emph{$(W,J,*)$-double coset datum}. 
Such data are obviously in bijection with $\I_* \cap W^J$, as well as with the set of  $(W_J,W_{J^*})$-double cosets in $W$  whose intersection with $\I_*$ is nontrivial.

As usual we write $L^J_W(s,q) = L^J_{W,\id}(s,q)$. We comment that it follows from \cite[Proposition 3.3(a)]{S} that the image of $\ellinv{*} : \I_*\to \NN$ is always finite, and so $L^J_{W,*}(s,q)$ is actually a polynomial in $s$ with coefficients in $\ZZ[[q]]$.
This power series is a generalization of $L_{W,*}(q)$ in the sense of the following lemma, which can be extracted from the discussion in \cite[\S5.8]{LV2}.

\begin{lemma}[Lusztig \cite{LV2}] \label{induct-lem} 
For any subset $J\subset S$ it   holds that
$L^J_{W,*}(-1,q) = L_{W,*}(q)$.
\end{lemma}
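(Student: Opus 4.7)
The plan is to reorganize the defining sum for $L_{W,*}(q)$ by partitioning $\I_*$ according to the $(W_J, W_{J^*})$-double coset of each twisted involution. Using the paper's bijection between $\I_* \cap W^J$ and the double cosets that meet $\I_*$, and noting that $s \cdot \tfrac{1-q}{1+q}$ specializes to $\tfrac{q-1}{q+1}$ at $s = -1$, it suffices to show that for each datum $(w, \diamond, K)$,
\[
\sum_{y \in \I_* \cap (W_J w W_{J^*})} q^{\ell(y)}\left(\tfrac{q-1}{q+1}\right)^{\ellinv{*}(y)} = q^{\ell(w)}\!\left(\tfrac{q-1}{q+1}\right)^{\ellinv{*}(w)} \cdot \tfrac{P_J(q^2)}{P_K(q^2)} \cdot L_{K,\diamond}(q).
\]

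The central step is to exhibit a bijection
\[
W_J^K \times \I_\diamond(W_K) \;\xrightarrow{\ \sim\ }\; \I_* \cap (W_J w W_{J^*}), \qquad (u, z) \mapsto y := u \cdot z w \cdot u^{-*},
\]
together with the identities $\ell(y) = 2\ell(u) + \ell(w) + \ell(z)$ and $\ellinv{*}(y) = \ellinv{*}(w) + \ellinv{\diamond}(z)$. Well-definedness of the map into $\I_*$ follows directly from $w^* = w^{-1}$ and $\diamond(z) = wz^*w^{-1} = z^{-1}$. For bijectivity I would invoke Kilmoyer's theorem: since $K = J \cap wJ^*w^{-1} = J \cap J^\diamond$, every $y \in W_J w W_{J^*}$ factors uniquely as $y = uwv$ with $u \in W_J^K$, $v \in W_{J^*}$, and $\ell(y) = \ell(u) + \ell(w) + \ell(v)$. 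The equation $yy^* = 1$ then rearranges to $wvu^*w^{-1} = u^{-1}v^{-*}$, and setting $z := u^{-1}v^{-*}$ one checks that $z \in W_K$, $\diamond(z) = z^{-1}$, and $y = uzwu^{-*}$, recovering the inverse of the map. The length identity follows from the resulting decomposition $v = z^{-*}u^{-*}$ and the standard additivity $\ell(z^{-1}u^{-1}) = \ell(z) + \ell(u^{-1})$ for $z \in W_K$ and $u^{-1} \in {}^K W_J$, using that $\diamond(K) = J^\diamond \cap J = K$ so $\diamond$ restricts to a length-preserving automorphism of $(W_K, K)$.

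The main obstacle is the absolute length identity. Since $y$ is the $u$-twisted $*$-conjugate of $zw$, property (b) of $\ellinv{*}$ reduces the claim to
\[
\ellinv{*}(zw) = \ellinv{*}(w) + \ellinv{\diamond}(z) \qquad \text{for all } z \in \I_\diamond(W_K),
\]
which I would prove by induction on $\ell(z)$, the base case $z = 1$ being immediate. For the inductive step, choose a descent $s \in K$ of $z$ and invoke the standard dichotomy for $\diamond$-twisted involutions: either $sz \ne z\diamond(s)$, in which case $z' := s z \diamond(s)$ is a shorter $\diamond$-twisted involution with $z'w = s(zw)s^*$ (using the identity $\diamond(s)w = ws^*$ arising from $\diamond(s) = ws^*w^{-1}$), so property (b) applied to both $\ellinv{\diamond}$ and $\ellinv{*}$ gives $\ellinv{\diamond}(z') = \ellinv{\diamond}(z)$ and $\ellinv{*}(z'w) = \ellinv{*}(zw)$; or $sz = z\diamond(s)$, in which case $sz \in \I_\diamond$ with $\ell(sz) = \ell(z) - 1$ and $szw = zws^* \in \I_*$, so property (c) of $\ellinv{*}$ applied to $(zw, zws^*)$ yields $\ellinv{*}(szw) - \ellinv{*}(zw) = \ell(szw) - \ell(zw)$. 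In either case the inductive hypothesis applied to the shorter element ($z'$ or $sz$, respectively) closes the step. Assembling the bijection with the two length identities, the inner sum factors as $q^{\ell(w)}\bigl(\tfrac{q-1}{q+1}\bigr)^{\ellinv{*}(w)} \bigl(\sum_{u \in W_J^K} q^{2\ell(u)}\bigr) L_{K,\diamond}(q)$, and recognizing the $u$-sum as $P_J(q^2)/P_K(q^2)$ yields the lemma.
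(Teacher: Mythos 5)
Your proposal is correct and follows essentially the same route as the paper's proof: partition $\I_*$ into $(W_J,W_{J^*})$-double cosets and parametrize the twisted involutions in each coset by pairs $(u,z) \in W_J^K \times \I_\diamond(W_K)$ via $y = u\cdot zw\cdot (u^*)^{-1}$, with the same two length identities. The only difference is that the paper cites Geck--Pfeiffer and Lusztig for these facts while you supply the proofs (Kilmoyer's theorem for the factorization, and the standard descent dichotomy plus properties (b) and (c) of $\ellinv{*}$ for the absolute-length additivity), all of which checks out.
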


We include a sketch of the proof for completeness. 

\begin{proof}
If $\cD \subset W$ is a $(W_J,W_{J^*})$-double coset,
then 
 $\cD$ contains a unique element $w_\cD$ of minimal length, and  $\I_*\cap \cD  \neq \varnothing$ if and only if $w_\cD \in \I_* \cap W^J$; see  \cite[\S2.1]{GP}. In this case, if $\diamond$ and $K$ are  as in \eqref{cosetdata} for $w=w_\cD$, then it follows from \cite[Proposition 2.1.1 and Lemma 2.1.9]{GP} that
(1) for each $y \in \I_*\cap \cD $  there is a unique pair $(u,z) \in W_J^K \times \I_\diamond(W_K) $ with 
 $ y= u \cdot z \cdot w_\cD \cdot (u^*)^{-1}$, and  (2)  the map $y\mapsto  (u,z)$ is  a bijection $ \I_* \cap \cD \to W_J^K \times \I_\diamond(W_K)$ satisfying
\[ \ell(y) = 2\ell(u) + \ell(z) + \ell(w_\cD) \qquand \ellinv{*}(y) =  \ellinv{\diamond}(z) + \ellinv{*}(w_\cD).\]
From these facts, it is straightforward to deduce that 
\[
  \sum_{w \in \cD \cap \I_*} q^{\ell(w)} \(\tfrac{q-1}{q+1}\)^{\ellinv{*}(w)}
  =
q^{\ell(w_\cD)}\cdot   \(\tfrac{q-1}{q+1}\)^{\ellinv{*}(w_\cD)}
\cdot   \tfrac{P_{J}(q^2)}{P_{K}(q^2)} \cdot  L_{K,\diamond}(q)
\]
and summing this formula over all double cosets $\cD$ intersecting $\I_*$ gives the desired identity.
\end{proof}

If $W$ is finite then the sum defining $L^J_{W,*}(s,q)$  often has only a few terms, and so setting $s=-1$ translates the definition \eqref{LJ-def} to a simple recurrence for $L_{W,*}(q)$ by Lemma \ref{induct-lem}.  Such finite order recurrences allow one to check Corollary \ref{+cor} in a case-by-case fashion using induction on rank, which was Lusztig's original strategy for proving this result in \cite{LV2}.

\begin{example}
Let $W=S_{n+1}$, viewed as a Coxeter group relative to $S = \{s_1,\dots,s_{n}\}$ where $s_i = (i,i+1)$, and set   $J = \{s_1,\dots,s_{n-1}\}$. Then
$ \I(W)\cap W^J  = \{ 1 ,s_{n}\}$ so the right side of \eqref{LJ-def} has two terms, which one can check are given by
\[ L^{J}_{S_{n+1}}(s,q) = L_{S_{n}}(q) + s \cdot q \cdot \tfrac{1-q^{n}}{1+q}\cdot \tfrac{1+q^{n}}{1+q} \cdot L_{S_{n-1}}(q).\]
The recurrence given by   setting $s=-1$ implies   $L_{S_n}(q) = \prod_{i=1}^n \frac{1+q^i}{1+q}$, as predicted by Corollary \ref{+cor}.
\end{example}

In $W$ is an infinite Coxeter group and $W^J$ is also infinite (as occurs, for example, if $W_J$ is finite) then the sum defining  $L^J_{W,*}(s,q)$ has an infinite number of terms  and so is more interesting to consider, as well as more difficult to compute. Our results about this power series become nicer if we renormalize it to specialize to the identity when $s=-1$, and  we therefore define
\be\label{TJ-def} T^J_{W,*}(s,q) =  L^J_{W,*}(s,q) / L_{W,*}(q) .
\ee
When $(W,S)$ is a finite Weyl group and $(\tilde W ,\tilde S)$ is the Coxeter system of the corresponding affine Weyl group, 
there is a particularly  natural choice of the set $J$, namely, the generating set $S \subset \tilde S$ of the finite subgroup $W$. We abbreviate in this special case by setting
\[ T_{\tilde W}(s,q) =   T^S_{\tilde W,\id}(s,q).
\]
More explicitly, we have
\be
\label{Tdef} T_{\tilde W}(s,q) =
   \tfrac{ P_{\tilde W}(q)} { P_{\tilde W}(q^2)}
 \cdot  \sum_{(w,\diamond,K)} \( q^{\ell(w)} \cdot \( s \cdot \tfrac{1-q}{1+q}\)^{\ellprime (w)} \cdot \tfrac{P_W(q^2)}{F_{K,\diamond}(q)}\)
\ee
where the sum is over all triples $(w,\diamond,K)$ such that $w$ is an involution which is a minimal length left (equivalently, right) coset representative of $W$ in $\tilde W$,
 $\diamond  \in \Aut(\tilde W)$ is the inner automorphism $x\mapsto wxw$, and $K = S \cap wSw  $. With this definition, we may now discuss our results.

\subsection{Outline of results}

We write $S_n$ and $\tilde S_n$ for the Coxeter groups of rank $n-1$ and $n$ 
with the respective Coxeter diagrams
\be\label{diagrams}\barr{c} 
 \xymatrix  @=18pt{ \\
 {s_1}  \ar @{-} [r]   &  {s_2}  \ar @{-} [r]   &  {\cdots} \ar @{-} [r]     &  {s_{n-1}}    
 } 
 \\[-10pt]\\
 S_n\text{ (for $n\geq 1$)} \earr
 \qquad\qquad
\barr{c}
  \xymatrix  @=6pt{ 
 & && {s_0}\ar @{-} [dlll] \ar @{-} [drrr]
 \\
 {s_1}  \ar @{-} [rr]&   &  {s_2}  \ar @{-} [rr]  & &  {\cdots} \ar @{-} [rr] &        &  {s_{n-1}}  
 }
 \\[-10pt]\\
 \tilde S_n\text{ (for $n\geq 3$)}
 \earr
\ee
In the  degenerate cases when $n \in\{1, 2\}$, we define $\tilde S_2$ to be the infinite dihedral group and set $\tilde S_1 = S_1 = \{1\}$.
Then $S_n$ is  the Weyl group of type $A_{n-1}$ for all positive integers $n$, and $\tilde S_n$ is the corresponding affine Weyl group.
The group $S_n$ is isomorphic to the symmetric group of permutations of $\{1,2,\dots,n\}$, while $\tilde S_n$ is isomorphic to the \emph{affine symmetric group} whose definition we will review in Section \ref{affSn-sect}. 

Our main result is  an explicit formula for $T_{\tilde S_n}(s,q)$.
In stating this, we adopt the usual notational conventions 
 for $q$-factorials, $q$-binomial coefficients, and $q$-Pochhammer symbols; that is, we set
\[
[n]_q  =\tfrac{1-q^n}{1-q}
\qquand  [n]_q! = \prod_{1\leq k \leq n} [k]_q
\qquand  \tbinom{n}{k}_q = \tfrac{[n]_q!}{[k]_q! [n-k]_q!}\text{ for $0\leq k\leq n$}
\]
and define 
$(a;q)_n = (1-a)(1-aq)(1-aq^2)\cdots (1-aq^{n-1})$, with $(a;q)_0=1$.
The proof of the following theorem occupies most of Section \ref{results-sect}.

\begin{theorem}\label{main-thm}
For all $n > 0$ it holds that 
\[ T_{\tilde S_n}(s,q) = \ds \sum_{k=0}^{ \lfloor n/2\rfloor} \( s^k \cdot  q^{k^2} \cdot \tfrac{[n]_q}{[2n-2k]_q} \cdot \tbinom{n-k}{k}_q \cdot
 (-q^{k+1};q)_{n-2k}\)
 .  \]
\end{theorem}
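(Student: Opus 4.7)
My plan is to evaluate the sum \eqref{Tdef} defining $T_{\tilde S_n}(s,q)$ directly by parametrizing the involution minimal coset representatives of $S_n$ in $\tilde S_n$ and computing each factor explicitly. I work in the standard affine permutation realization, where $\tilde S_n$ consists of bijections $w\colon\ZZ\to\ZZ$ with $w(i+n)=w(i)+n$ and $\sum_{i=1}^n w(i)=\binom{n+1}{2}$, so the minimal coset representatives of $W=S_n$ are precisely those $w$ with $w(1)<\cdots<w(n)$ and the sum in \eqref{Tdef} ranges over the involutions of this form.

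The first step is to parametrize these involutions. Since the $2$-cycles of such a $w$ are stable under translation by $n\ZZ$, each involution is determined by an \emph{orbit pattern}---a partition of $\{1,\dots,n\}$ into $n-2k$ fixed residues and $k$ unordered pairs of residues, one per $n\ZZ$-orbit of $2$-cycles---together with $k$ nonnegative \emph{shift parameters} recording the distance between paired window entries; the condition $w(1)<\cdots<w(n)$ gives explicit linear constraints that select the admissible data. The second step is to compute $\ell(w)$, $\ell'(w)=k$, and the subgroup $K=S\cap wSw$ together with its twisting automorphism $\diamond\colon x\mapsto wxw$ and its generating function $F_{K,\diamond}(q)$. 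Here $\ell(w)$ splits as a baseline depending only on the orbit pattern plus a linear function of the shift parameters (computed via counting affine inversions), and $s_i\in K$ precisely when $w(i+1)=w(i)+1$; the latter condition picks out simple reflections coming from ``runs'' of consecutive fixed residues and from matched non-fixed positions whose window values happen to be consecutive. Consequently $F_{K,\diamond}(q)$ factors as a product of standard $q$-Poincar\'e-type series indexed by these runs.

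Using Theorems \ref{Pfactor-thm} and \ref{bott-thm}, a short calculation yields
\[
\frac{P_{\tilde S_n}(q)\,P_{S_n}(q^2)}{P_{\tilde S_n}(q^2)}\;=\;\frac{(1+q)^n\,[n]_{q^2}!}{1+q^n},
\]
and Theorem \ref{L6-thm} rewrites each local factor $L_{K,\diamond}(q)$ as $P_K(q^2)/F_{K,\diamond}(q)$, reducing the entire expression to a concrete multivariate sum indexed by patterns and shifts. Grouping by $k=\ell'(w)$, the sum over shift parameters is a multi-geometric series producing $q^{k^2}$ divided by a product of $(1-q^a)$ factors, and the sum over admissible orbit patterns---after absorbing the factors of $F_{K,\diamond}(q)$---yields the $q$-binomial $\binom{n-k}{k}_q$ (via a bijection with partitions in a $k\times(n-2k)$ box) together with the $q$-Pochhammer factor $(-q^{k+1};q)_{n-2k}$ coming from the fixed residues. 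The leftover denominators should telescope against $(1+q)^n/(1+q^n)$ to produce the ratio $[n]_q/[2n-2k]_q$, completing the identification with the stated formula.

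The main obstacle lies in the combinatorial analysis of the first two steps: accurately identifying which orbit patterns survive the minimal-coset condition (several natural-looking configurations contribute nothing), and correctly computing $K$ and $F_{K,\diamond}(q)$ in the cases where $K\ne\varnothing$ for a non-identity involution---which do occur and whose contributions are essential for the final cancellations. Once this explicit combinatorial data is in hand, the final $q$-series identity should reduce to a formal manipulation recognizing the sum as the $q$-analogue of a Chebyshev polynomial of the first kind alluded to in the abstract.
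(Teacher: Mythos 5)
Your plan follows the same overall route as the paper: parametrize the involutions in $\I(\tilde S_n)\cap \tilde S_n^S$ explicitly as affine permutations, compute $\ell$, $\ellprime$, $K$ and $F_{K,\diamond}(q)$ for each, and then evaluate the resulting sum. The prefactor computation $P_{\tilde S_n}(q)P_{S_n}(q^2)/P_{\tilde S_n}(q^2)=(1+q)^n[n]_{q^2}!/(1+q^n)$ is correct, and the use of Theorem \ref{L6-thm} for the finite parabolics $W_K$ is legitimate. However, as written the proposal has two genuine gaps. First, the parameter space you describe (an arbitrary partition of the $n$ residues into $n-2k$ fixed points and $k$ unordered pairs, plus shifts) is far larger than what actually survives the minimal-coset condition: the condition $w(1)<\cdots<w(n)$ together with $w^2=1$ forces $\lambda(w)=(\lfloor(w(i)-1)/n\rfloor)_i$ to be weakly increasing \emph{and antisymmetric}, so the only admissible pairings are the fully nested symmetric ones (position blocks at the two ends matched in reverse order), parametrized by a composition $c=(c_1,\dots,c_m)$ of $k$ recording block multiplicities together with $m$ \emph{positive} (not merely nonnegative) gap parameters. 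Pinning this down, together with the identification $K=\bigcup_i A_i\cup B_i\cup F$ and $F_{K,\diamond}(q)=\prod_i[c_i]_{q^2}!\cdot[z]_q!$ (where the $\diamond$-twist pairs the end blocks, turning their contribution into $P_{S_{c_i}}(q^2)$ rather than $P_{S_{c_i}}(q)^2$, and where block-boundary generators must be excluded from $K$ because $ws_iw\notin S$ there), is the content of the paper's Theorem \ref{Omega-thm} and Propositions \ref{ellOmega-prop}--\ref{FOmega-prop}, and none of it is routine.

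Second, and more seriously, the final summation is not ``a formal manipulation.'' After the geometric series over the gap parameters, one is left with a sum over all compositions $c$ of $k$ of $\prod_{i=1}^m\bigl(q^{2b_i(n-b_i)}/(1-q^{2b_i(n-b_i)})\bigr)\cdot q^{2\binom{c_i}{2}}/[c_i]_{q^2}!$ with $b_i=c_1+\cdots+c_i$; the paper must prove the nontrivial identity $\sum_{c}\Pi(c;x,y)=\frac{1}{[k]_x!}\prod_{i=1}^k\frac{1}{x^iy-1}$ (Proposition \ref{csumfixed-prop}) by induction on $k$, resting on the $q$-Vandermonde-type Lemma \ref{tech-lem}. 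Your proposed alternative --- obtaining $\binom{n-k}{k}_q$ directly from a bijection between orbit patterns and partitions in a $k\times(n-2k)$ box --- does not match the actual structure of the sum: the patterns are indexed by compositions of $k$ (of which there are $2^{k-1}$), weighted by denominators $1-q^{2b_i(n-b_i)}$ that do not factor over such a bijection, and in the paper the $q$-binomial only appears afterwards, from the purely algebraic identity $[n-k]_{q^2}!/([n-2k]_q!\,[k]_{q^2}!)=\binom{n-k}{k}_q(-q^{k+1};q)_{n-2k}(1+q)^{2k-n}$. Without a proof of the composition-sum identity (or a genuinely new bijective substitute for it), the argument does not close.
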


This result will imply a number of remarkable properties of the power series $T_{\tilde S_n}(s,q)$.
For small values of $n$, the theorem gives the following formulas:
\[
\ba
T_{\tilde S_1}(s,q) &= 1
\\
T_{\tilde S_2}(s,q) &= (1+q)+qs
\\
T_{\tilde S_3}(s,q) &= (1+q +q^2 + q^3) + (q+q^2+q^3)s 
\\
T_{\tilde S_4}(s,q) &= 
(1 + q + q^2 + 2q^3 + q^4 + q^5 + q^6) + 
(q + q^2 + 2q^3 + 2q^4 + q^5 + q^6)s + q^4 s^2.
\ea
\]
Setting $q=1$ here leads to the following surprising observation. Write $T_n(x)$ to denote the $n^{\mathrm{th}}$ Chebyshev polynomial of the first kind, which we recall 
is
 the unique polynomial over $\ZZ$ satisfying $T_n(\cos( x)) = \cos(nx)$ for all $x \in \mathbb{R}$.
 
\begin{corollary}\label{cor1}
If $t_{n,k}(q) \in \QQ(q)$ are   rational functions such that $T_{\tilde S_n}(s,q) = \sum_{k\geq 0} t_{n,k}(q) (-s)^k$,
then $T_n(x)=\sum_{k\geq 0}   t_{n,k}(1) x^{n-2k}$.
\end{corollary}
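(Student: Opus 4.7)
The plan is to derive an explicit closed form for $t_{n,k}(1)$ directly from the formula in Theorem \ref{main-thm} and then match it against the standard explicit expansion of the Chebyshev polynomial $T_n(x)$.

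First I would extract the coefficients. Writing $T_{\tilde S_n}(s,q) = \sum_k t_{n,k}(q)(-s)^k$ and comparing with Theorem \ref{main-thm}, we see that the coefficient of $s^k$ on each side must agree, so
\[
t_{n,k}(q) \;=\; (-1)^k \cdot q^{k^2} \cdot \tfrac{[n]_q}{[2n-2k]_q} \cdot \tbinom{n-k}{k}_q \cdot (-q^{k+1};q)_{n-2k}.
\]
Next I would specialize each factor at $q=1$: clearly $q^{k^2}\to 1$ and $\binom{n-k}{k}_q \to \binom{n-k}{k}$, while $\frac{[n]_q}{[2n-2k]_q} \to \frac{n}{2(n-k)}$, and each of the $n-2k$ factors $(1+q^{k+i})$ in the Pochhammer symbol $(-q^{k+1};q)_{n-2k}$ becomes $2$, giving $2^{n-2k}$. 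Combining these yields
\[
t_{n,k}(1) \;=\; (-1)^k \cdot \tfrac{n}{2(n-k)} \cdot \tbinom{n-k}{k} \cdot 2^{n-2k}.
\]

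Finally, I would invoke the classical explicit formula for the Chebyshev polynomials of the first kind,
\[
T_n(x) \;=\; \tfrac{n}{2}\sum_{k=0}^{\lfloor n/2\rfloor} \tfrac{(-1)^k}{n-k} \tbinom{n-k}{k}(2x)^{n-2k},
\]
and observe that the $k$-th summand equals exactly $t_{n,k}(1)\cdot x^{n-2k}$. Summing over $k$ gives the claimed identity.

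There is really no hard step here: once Theorem \ref{main-thm} is in hand, the corollary is a direct term-by-term comparison, and the only thing requiring a bit of care is the sign convention implicit in writing the expansion in powers of $-s$ rather than $s$, and the observation that the apparently intimidating $q$-Pochhammer factor collapses to a pure power of $2$ at $q=1$.
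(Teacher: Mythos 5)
Your proof is correct and follows exactly the route the paper takes: the paper's own proof is a one-line remark that the corollary ``follows by comparing the right side of Theorem \ref{main-thm} with $q=1$ with a well-known explicit sum formula for $T_n(x)$,'' and you have simply carried out that comparison in full, with the correct specializations $\frac{[n]_q}{[2n-2k]_q}\to\frac{n}{2(n-k)}$, $\binom{n-k}{k}_q\to\binom{n-k}{k}$, and $(-q^{k+1};q)_{n-2k}\to 2^{n-2k}$, and the correct handling of the sign coming from the $(-s)^k$ convention. Nothing is missing.
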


\begin{proof}
The corollary follows by comparing the right side of Theorem \ref{main-thm} with $q=1$ with a well-known explicit sum formula for $T_n(x)$,
available for example as \cite[Eq.\ (14)]{Weisstein}.
\end{proof}

Thus, in a certain sense the power series $T_{\tilde S_n}(s,q)$ are $q$-analogues of the Chebyshev polynomials of the first kind. 
Amazingly, these $q$-analogues are essentially the same as ones which have appeared in recent work of Cigler in an entirely unrelated context.

In the preprints
 \cite{Cigler1,Cigler2,Cigler3}, Cigler defines and studies certain trivariate polynomials $T_n(x,s,q)$ and $U_n(x,s,q)$ 
 which he calls  the \emph{$q$-Chebyshev polynomials} of the first and second kind. These polynomials reduce to the classical Chebyschev polynomials $T_n(x)$ and $U_n(x)$ of the first and second kind on setting $s=-1$ and $q=1$, and exhibit many $q$-analogues of well-known identities for those much-studied orthogonal polynomials. 
Cigler notes  that his $q$-Chebyshev polynomials of the second kind $U_n(x,s,q)$  
are closely related to the \emph{Al-Salam and Ismail polynomials} previously studied in 
\cite{AI,IPS}; see \cite[Eq.\ (5.3)]{Cigler1}. 
The polynomials  $T_n(x,s,q)$ and $U_n(x,s,q)$ are also connected to the  bivariate $q$-analogues $p_n^{(T)}(x|q)$ and $p_n^{(U)}(x|q)$ studied by Atakishiyeva and  Atakishiyev in \cite{AA}; explicitly, the latter polynomials are obtained by  rescaling $T_n(x,-1,q)$ and $U_n(x,-q^{-1},q)$.
Finally, as Koornwinder observes in \cite[\S14.5]{Koornwinder}, all of these polynomials can be expressed as special cases of  \emph{big $q$-Jacobi polynomials}, about which we will  say more  following Corollary \ref{main2-cor}.

It is Cigler's $q$-Chebyshev polynomials of the first kind  which connect most directly to our  discussion, and so we only provide their definition.
Following \cite[Definition 2.1]{Cigler2},  we let $T_n(x,s,q)$ denote the unique element of $ \ZZ[x,s,q]$ 
 satisfying the recurrence  
\[  T_n(x,s,q) = (1+q^{n-1})  \cdot x \cdot  T_{n-1}(x,s,q) + q^{n-1}\cdot  s\cdot   T_{n-2}(x,s,q)
\qquad\text{(for $n\geq 2$)}\]
with the initial conditions $T_0(x,s,q)= 1$ and $T_1(x,s,q) = x$.
Comparing Theorem \ref{main-thm} with \cite[Theorem 2.5]{Cigler2} implies the following:

\begin{theorem}\label{main2-thm}
For all $n> 0$ it holds that $T_{\tilde S_n}(s,q) = T_n(1,s,q)$. 
\end{theorem}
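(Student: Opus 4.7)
The plan is to prove the identity $T_{\tilde S_n}(s,q) = T_n(1,s,q)$ by direct comparison of two explicit formulas: the closed form for $T_{\tilde S_n}(s,q)$ established in Theorem \ref{main-thm}, and Cigler's explicit expansion of $T_n(x,s,q)$ in \cite[Theorem 2.5]{Cigler2}, specialized to $x=1$. Both sides are polynomials in $s$ and $q$, so the equality can be checked coefficient by coefficient.

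First I would write out Cigler's formula with $x=1$ and identify, for each $k$, the coefficient of $s^k$. Since Cigler's polynomials are $q$-analogues of the classical identity $T_n(x) = \tfrac{n}{2}\sum_{k=0}^{\lfloor n/2\rfloor} \tfrac{(-1)^k}{n-k}\binom{n-k}{k}(2x)^{n-2k}$, the natural $q$-analogue of the rational prefactor $\tfrac{n}{2(n-k)}\binom{n-k}{k}$ is $\tfrac{[n]_q}{[2n-2k]_q}\binom{n-k}{k}_q$, which is exactly the factor appearing in Theorem \ref{main-thm}. Matching the remaining factor $q^{k^2}\cdot(-q^{k+1};q)_{n-2k}$ to Cigler's $q$-analogue of $(2x)^{n-2k}$ evaluated at $x=1$ is then a routine application of standard $q$-Pochhammer identities, such as an appropriate form of the $q$-binomial theorem.

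A fallback, if Cigler's conventions do not yield a literal match, is to verify instead that the right-hand side of Theorem \ref{main-thm} satisfies Cigler's defining recurrence $T_n(1,s,q) = (1+q^{n-1})\, T_{n-1}(1,s,q) + q^{n-1}\, s\, T_{n-2}(1,s,q)$ for $n\geq 2$, together with the initial conditions $T_0(1,s,q) = T_1(1,s,q) = 1$. The initial cases are immediate from the small-$n$ evaluations tabulated after Theorem \ref{main-thm}. For the inductive step, one would extract the coefficient of $s^k$ on both sides and factor out the common prefactor $q^{k^2}\cdot(-q^{k+1};q)_{n-2k-1}$; applying the relations $(-q^{k+1};q)_{n-2k} = (1+q^{n-k})(-q^{k+1};q)_{n-2k-1}$ and $(-q^{k};q)_{n-2k} = (1+q^{k})(-q^{k+1};q)_{n-2k-1}$, together with $[2m]_q = (1+q^m)[m]_q$ to simplify $[2n-2k]_q$, reduces the required equality to a Pascal-type recurrence for $\binom{n-k}{k}_q$.

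The main obstacle is the bookkeeping with $q$-powers. The prefactor $q^{k^2}$, which encodes the twisted absolute length statistic $\ellprime$ responsible for $T_{\tilde S_n}(s,q)$, must be reconciled with Cigler's $q$-weights across the index shift $k \mapsto k-1$ introduced by the second term of the recurrence. This computation is routine but detail-heavy, and isolating the cleanest compact $q$-identity is the substance of the proof.
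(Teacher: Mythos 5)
Your primary plan --- comparing the closed form from Theorem \ref{main-thm} directly with Cigler's explicit expansion in \cite[Theorem 2.5]{Cigler2} at $x=1$ --- is exactly the paper's proof, which consists of precisely this comparison (and in fact the match is essentially literal term by term, so less $q$-Pochhammer manipulation is needed than you anticipate). The recurrence-based fallback is a sound alternative but is not needed.
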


\begin{remark}
From this result, we see that $T_{\tilde S_n}(s,q)$ may be defined by a  simple second-order recurrence. 
We do not know of any elementary way of proving this recurrence directly from \eqref{Tdef}. 
\end{remark}
%

The polynomials $x^n \cdot T_n(1,s/x^2,q)$ satisfy the same recurrence as $T_n(x,s,q)$, and so,
 conversely, Cigler's $q$-Chebyshev polynomials are actually determined by $T_{\tilde S_n}(s,q)$:

\begin{corollary}\label{main2-cor}
For all $n> 0$ it holds that 
$  T_{n}(x,s,q) = x^n \cdot T_{\tilde S_n}( s/x^2,q)$.
\end{corollary}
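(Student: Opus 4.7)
The plan is to reduce the corollary to Theorem \ref{main2-thm} by verifying the homogeneity statement already hinted at in the text: namely, that the polynomial $P_n(x,s,q) := x^n \cdot T_n(1, s/x^2, q)$ coincides with Cigler's $q$-Chebyshev polynomial $T_n(x,s,q)$. Once this is established, applying Theorem \ref{main2-thm} to the inner factor gives $T_n(x,s,q) = x^n \cdot T_n(1, s/x^2, q) = x^n \cdot T_{\tilde S_n}(s/x^2, q)$, which is exactly the claim.

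To show $P_n = T_n$, I would argue by strong induction on $n$, using the defining recurrence
\[
T_n(x,s,q) = (1+q^{n-1}) \cdot x \cdot T_{n-1}(x,s,q) + q^{n-1} \cdot s \cdot T_{n-2}(x,s,q)
\qquad (n \geq 2)
\]
together with the base cases $T_0(x,s,q)=1$ and $T_1(x,s,q)=x$. The base cases match, since $P_0 = 1 \cdot T_0(1, s/x^2, q) = 1$ and $P_1 = x \cdot T_1(1, s/x^2, q) = x \cdot 1 = x$. For the inductive step, I would specialize the defining recurrence for $T_n$ at $x=1$ with $s$ replaced by $s/x^2$, obtaining
\[
T_n(1, s/x^2, q) = (1+q^{n-1}) \cdot T_{n-1}(1, s/x^2, q) + q^{n-1} \cdot \tfrac{s}{x^2} \cdot T_{n-2}(1, s/x^2, q),
\]
and then multiply through by $x^n$. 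The factors of $x$ align so that the right side becomes $(1+q^{n-1}) \cdot x \cdot P_{n-1}(x,s,q) + q^{n-1} \cdot s \cdot P_{n-2}(x,s,q)$, which is the recurrence for $T_n(x,s,q)$. Hence $P_n$ and $T_n$ agree.

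There is essentially no obstacle here; the computation is a one-line verification that Cigler's recurrence respects the substitution $s \mapsto s/x^2$ after rescaling by $x^n$, and the entire corollary is a formal consequence of Theorem \ref{main2-thm} together with this homogeneity. The only subtle point worth flagging is that the identity $T_n(x,s,q) = x^n T_n(1, s/x^2, q)$ should be interpreted as an equality in $\ZZ[x, x^{-1}, s, q]$ (or equivalently as a polynomial identity after clearing denominators in $x$), and one may wish to note that $T_n(x,s,q)$ is in fact a polynomial whose monomials $x^a s^b$ satisfy $a + 2b = n$, which is precisely what makes the substitution well-defined as a polynomial identity.
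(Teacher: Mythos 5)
Your proposal is correct and is essentially the paper's own argument: the paper's entire justification is the sentence preceding the corollary, asserting that $x^n\cdot T_n(1,s/x^2,q)$ satisfies the same recurrence (and initial conditions) as $T_n(x,s,q)$, and then invoking Theorem \ref{main2-thm}. You have simply spelled out the one-line recurrence check, and your remark about the homogeneity $a+2b=n$ of the monomials $x^a s^b$ correctly explains why the substitution is a genuine polynomial identity.
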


 Koornwinder \cite[\S14.5]{Koornwinder} notes that the $q$-Chebyshev polynomials $T_n(x,s,q)$ satisfy 
\be\label{k} 
T_n(x,s,q) = (-s)^{\frac{1}{2}n}\cdot P_n\((-qs)^{-\frac{1}{2}}x;q^{-\frac{1}{2}},q^{-\frac{1}{2}},1,1;q\)
\ee
where $P_n(x;a,b,c,d;q) $ denotes the $n^{\mathrm{th}}$ \emph{big $q$-Jacobi polynomial}, which may be defined in terms of $q$-hypergeometric functions (see \cite[Eq.\ (93)]{Koornwinder}) as  
\[P_n(x;a,b,c,d;q) \omdef= {_3\phi_2}\( {\barr{c} q^{-n},\ q^{n+1}ab,\ qac^{-1}x  \\  qa,\ -qac^{-1}d \earr};q,q\).
\]
By the theorem,  $T_{\tilde S_n}(s,q)$ 
 therefore has this connection to the  big $q$-Jacobi polynomials:
\begin{corollary}
For all $n>0$ it holds that 
\[ (-s)^{-\frac{1}{2}n} \cdot T_{\tilde S_n}(s,q) = P_n\((-qs)^{-\frac{1}{2}};q^{-\frac{1}{2}},q^{-\frac{1}{2}},1,1;q\)
=
{_3\phi_2}
\( {\barr{c} q^{-n},\ q^{n},\ (-s)^{-1/2} \\  q^{1/2},\ -q^{1/2} \earr};q,q\).
\]
\end{corollary}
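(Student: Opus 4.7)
The plan is to derive this corollary by composing three inputs: Theorem \ref{main2-thm} (which identifies $T_{\tilde S_n}(s,q)$ with Cigler's polynomial $T_n(1,s,q)$), Koornwinder's identity \eqref{k} (which rewrites $T_n(x,s,q)$ as a rescaled big $q$-Jacobi polynomial), and the explicit $_3\phi_2$ formula for $P_n(x;a,b,c,d;q)$ recalled just before the corollary. No new combinatorics is required; the whole statement reduces to tracking substitutions.

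First, specialize \eqref{k} at $x=1$ to get
\[ T_n(1,s,q) \;=\; (-s)^{n/2}\cdot P_n\((-qs)^{-1/2};\, q^{-1/2},q^{-1/2},1,1;q\). \]
Combining this with Theorem \ref{main2-thm} and multiplying by $(-s)^{-n/2}$ immediately yields the first equality of the corollary. This step is purely a rewriting of earlier results and contains no computation beyond clearing the prefactor.

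The second equality follows by plugging the parameters $a=b=q^{-1/2}$, $c=d=1$, and $x=(-qs)^{-1/2}$ into the displayed $_3\phi_2$ representation of $P_n(x;a,b,c,d;q)$. The required simplifications are
\[ q^{n+1}ab = q^{n},\qquad qac^{-1}x = q^{1/2}\cdot(-qs)^{-1/2} = (-s)^{-1/2},\qquad qa = q^{1/2},\qquad -qac^{-1}d = -q^{1/2}, \]
which convert the upper row $(q^{-n},\,q^{n+1}ab,\,qac^{-1}x)$ into $(q^{-n},\,q^n,\,(-s)^{-1/2})$ and the lower row $(qa,\,-qac^{-1}d)$ into $(q^{1/2},\,-q^{1/2})$. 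Reading off the resulting $_3\phi_2$ gives exactly the right-hand side of the corollary.

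Since both equalities are essentially substitutions into formulas that have already been cited, the only thing to watch out for is the branch choice implicit in the fractional powers $(-s)^{n/2}$ and $(-qs)^{-1/2}$ appearing in \eqref{k}. The cleanest way to handle this is to regard both sides as formal expressions in $q$ and in the indeterminate $u = (-s)^{1/2}$, in which case $u^{2n}$ on one side matches $u^{2n}$ on the other and no ambiguity arises; this formal interpretation is the only subtlety in the argument, and once it is addressed the proof is a one-line citation of Theorem \ref{main2-thm}, identity \eqref{k}, and the definition of $P_n(x;a,b,c,d;q)$.
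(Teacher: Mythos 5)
Your proposal is correct and matches the paper's (implicit) argument exactly: the corollary is stated as an immediate consequence of Theorem \ref{main2-thm} combined with Koornwinder's identity \eqref{k} at $x=1$ and the displayed ${}_3\phi_2$ definition of $P_n(x;a,b,c,d;q)$, and your parameter substitutions $q^{n+1}ab=q^n$, $qac^{-1}x=(-s)^{-1/2}$, $qa=q^{1/2}$, $-qac^{-1}d=-q^{1/2}$ all check out. Your remark about treating the fractional powers formally is a reasonable extra precaution, though the paper does not belabor it.
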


Surprisingly, despite the close connection between  $T_n(x,s,q)$ and $T_{\tilde S_n}(s,q)$, 
Cigler's motivations for defining the $q$-Chebyshev polynomials in \cite{Cigler1,Cigler2,Cigler3}, as well as the analogous motivations in the antecedent works \cite{AI,AA,IPS}, are essentially   disjoint from the ones which led us to define the power series $T_{\tilde W}(s,q)$.
Cigler originally was   interested in finding  $q$-analogues of bivariate Fibonacci and Lucas polynomials having  simple formulas and  also satisfying simple recurrences; see the discussion in \cite[\S1]{Cigler1}. Cigler found that these aims could be accomplished  simultaneously by introducing two additional parameters rather than just one, and the $q$-Chebyshev polynomials of the first (respectively, second) kind arise as  natural special cases 
of his quadrivariate analogues of the Lucas (respectively, Fibonacci) polynomials.

It is clear from the recurrence defining $T_n(x,s,q)$ that $q$-Chebyshev polynomials have nonnegative integer coefficients.
Thus, from Theorem \ref{main2-thm} we obtain this additional corollary:

\begin{corollary}\label{cor2}
For all $n>0$ it holds that $T_{\tilde S_n}(s,q) \in \NN[s,q]$; that is, the power series $T_{\tilde S_n}(s,q)$ is   a polynomial in $s$ and $q$ with nonnegative integer coefficients.
\end{corollary}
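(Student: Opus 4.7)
The plan is to deduce this immediately from Theorem \ref{main2-thm} together with the recurrence defining Cigler's polynomials. By Theorem \ref{main2-thm}, we have the identity $T_{\tilde S_n}(s,q) = T_n(1,s,q)$, so it suffices to show that $T_n(1,s,q) \in \NN[s,q]$.

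To do this, I would specialize the three-term recurrence for $T_n(x,s,q)$ given in the paragraph preceding Theorem \ref{main2-thm} at $x=1$, yielding
\[
T_n(1,s,q) = (1+q^{n-1})\, T_{n-1}(1,s,q) + q^{n-1}\, s\, T_{n-2}(1,s,q) \qquad \text{for } n \geq 2,
\]
with initial conditions $T_0(1,s,q) = 1$ and $T_1(1,s,q) = 1$. Both coefficients $(1+q^{n-1})$ and $q^{n-1}$ lie in $\NN[q]$, and $s$ appears only with a plus sign. An immediate induction on $n$ then shows $T_n(1,s,q) \in \NN[s,q]$, since the recurrence preserves the property of having nonnegative integer coefficients.

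There is no real obstacle here, as the argument is entirely formal once Theorem \ref{main2-thm} is in hand; the substantive content is packaged into the identification with $T_n(1,s,q)$. I note that the same argument does \emph{not} give nonnegativity directly from Theorem \ref{main-thm}, because the explicit formula there involves the factor $[n]_q/[2n-2k]_q$, which is not manifestly in $\NN[q]$; indeed, proving positivity straight from the closed form would require a combinatorial interpretation of this rational function, whereas routing through Cigler's recurrence bypasses this issue completely.
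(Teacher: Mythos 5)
Your argument is correct and is essentially identical to the paper's: the authors likewise note that nonnegativity of the coefficients of $T_n(x,s,q)$ is immediate from the defining recurrence and then invoke Theorem \ref{main2-thm}. Your induction at $x=1$ just spells out the detail the paper leaves implicit.
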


\begin{remark}
The more general power series $T^J_{W,*}(s,q)$ is not always a polynomial; see Proposition \ref{notpol-prop}.
 It would also be interesting to find some interpretation of the    coefficients of $T_{\tilde S_n}(s,q)$ in terms of  group-theoretic or geometric information attached to $\tilde S_n$.
Cigler has given a combinatorial interpretation of the   coefficients of $T_n(x,s,q)$: these count (certain) tilings of an $n$-set with a fixed ``weight,'' where a \emph{tiling} of an $n$-set is a way of forming an $(n\times 1)$-rectangle from some combination of white $(1\times 1)$-squares, black $(1\times 1)$-squares, and $(2\times 1)$-dominoes; see  \cite[Theorem 2.4]{Cigler2}.
\end{remark}

As a final comment, we note that $T_{\tilde S_n}(s,q)$ has a well-defined limit as $n\to \infty$.
Our notion of convergence in this statement is the usual one for formal power series; that is,  a sequence of power series converges to a limit if the sequence of coefficients of any fixed degree eventually stabilizes. 
\begin{corollary}\label{limit-cor}
It holds that
\[
 \lim_{n\to \infty} T_{\tilde S_n}(s,q) 
=
\sum_{k=0}^\infty \(s^k  \cdot q^{k^2} \cdot (q;q)_k^{-1}\cdot  (-q^{k+1};q)_\infty \)
= \prod_{k=1}^\infty (1+q^k)(1+sq^{2k-1}).
\]
\end{corollary}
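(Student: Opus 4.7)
The plan is to combine Theorem \ref{main-thm} with a classical $q$-series identity due to Euler. The argument splits naturally into two steps: first justify passing to the limit termwise, and then recognize the resulting sum as an infinite product.

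For the first equality, I would fix $k \geq 0$ and examine the limit of the summand in Theorem \ref{main-thm} as $n \to \infty$ in the formal power series topology on $\ZZ[[q]]$, in which $q^n \to 0$. Using the factorization $1-q^{2n-2k} = (1-q^{n-k})(1+q^{n-k})$, the ratio $[n]_q/[2n-2k]_q = (1-q^n)/((1-q^{n-k})(1+q^{n-k}))$ tends to $1$. Writing
\[\tbinom{n-k}{k}_q = \tfrac{1}{(q;q)_k}\prod_{j=1}^{k}(1-q^{n-2k+j}),\]
the $q$-binomial tends to $1/(q;q)_k$, and obviously $(-q^{k+1};q)_{n-2k} \to (-q^{k+1};q)_\infty$ since the extra factors $1+q^{k+1+j}$ are $1 + O(q^{k+1+j})$. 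The coefficient of any monomial $s^a q^b$ in $T_{\tilde S_n}(s,q)$ involves only the terms with $k=a$ (of which there is at most one for each $n$) and stabilizes once $n$ is large enough that the truncations of the three factors above agree with their limits modulo $q^{b+1}$. This gives the first equality.

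For the product form, I would rewrite the sum using the identity $(-q^{k+1};q)_\infty = (-q;q)_\infty/(-q;q)_k$ together with the factorization $(q;q)_k(-q;q)_k = (q^2;q^2)_k$, obtaining
\[\sum_{k=0}^\infty \frac{s^k q^{k^2}(-q^{k+1};q)_\infty}{(q;q)_k} = (-q;q)_\infty \sum_{k=0}^\infty \frac{s^k q^{k^2}}{(q^2;q^2)_k}.\]
Now apply Euler's identity $\sum_{k \geq 0} z^k q^{k(k-1)/2}/(q;q)_k = (-z;q)_\infty$ with $q$ replaced by $q^2$ and $z$ by $sq$; since $(sq)^k (q^2)^{k(k-1)/2} = s^k q^{k^2}$, the inner sum equals $(-sq;q^2)_\infty = \prod_{k \geq 1}(1+sq^{2k-1})$, while $(-q;q)_\infty = \prod_{k \geq 1}(1+q^k)$, giving the product form.

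No serious obstacle is anticipated; the only real content is the recognition that after the substitution $q \mapsto q^2$ the inner sum matches Euler's $q$-exponential series. The step that requires a moment of care is the justification of termwise passage to the limit, which is routine in the $(q)$-adic topology on $\ZZ[[s,q]]$ since the formula of Theorem \ref{main-thm} is polynomial in $s$ of degree $\lfloor n/2 \rfloor$ and each $s^k$-coefficient stabilizes as an element of $\ZZ[[q]]$.
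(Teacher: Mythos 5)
Your proof is correct and follows essentially the same route as the paper: the first equality is obtained by the same termwise passage to the limit (the ratio $[n]_q/[2n-2k]_q\to 1$ and $\tbinom{n-k}{k}_q\to (q;q)_k^{-1}$), and the second equality rests on Euler's identity $\sum_k q^{\binom{k}{2}}z^k/(q;q)_k=(-z;q)_\infty$, which is exactly the identity the paper cites as \cite[Eq.\ (1.3.16)]{GR}. The only difference is that you spell out the reduction to that identity (via $(-q^{k+1};q)_\infty=(-q;q)_\infty/(-q;q)_k$ and $(q;q)_k(-q;q)_k=(q^2;q^2)_k$) where the paper simply refers to the literature.
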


\begin{proof}
The first equality follows on noting
that in $\ZZ[[s,q]]$, we have
$\ds\lim_{n\to \infty} [n]_q = \tfrac{1}{1-q}$ and $\ds\lim_{n\to \infty} \tbinom{n-k}{k}_q = \tfrac{1}{[k]_q! (1-q)^k}$.  The second equality follows (by a combinatorial argument) from  \cite[Eq.\ (19.5.1)]{HW} or (using the theory of hypergeometric series) from \cite[Eq.\ (1.3.16)]{GR}.
\end{proof}

\begin{remark}
One may also write 
$\lim_{n\to \infty} T_{\tilde S_n}(s,q)= {_0 \phi_1}(-q;q,sq) \cdot (-q;q)_\infty $.
The product formula in the corollary has a natural interpretation as the generating  function of a certain class of integer partitions; see the discussion in \cite[Chapter 19]{HW}.
\end{remark}

It seems natural to conjecture that the phenomena we have identified  in connection with the power series $T_{\tilde S_n}(s,q)$ should fit into some larger picture.
 To investigate this possibility, the obvious next step is to compute $T_{\tilde W}(s,q)$ for the other (classical) affine Weyl groups $\tilde W$.
Outside of type $A$, are these power series still polynomials with nonnegative coefficients?
In view of Corollary \ref{main2-cor}, it would be especially interesting to know whether   the trivariate power series $x^{|\tilde S|} \cdot T_{\tilde W}(s/x^2,q)$ 
is a $q$-analogue of some  known family of orthogonal polynomials.

\subsection*{Acknowledgements}

We thank Megan Bernstein, Dan Bump, Persi Diaconis,  Angela Hicks, and Tom Koornwinder for many helpful discussions and suggestions in the course of the development of this paper.

\section{Results}\label{results-sect}

Throughout, we let $[n] = \{ i \in \ZZ : 0 < i \leq n \}$ for $n \in \ZZ$, so that $[0] = \varnothing$.

\subsection{Universal Coxeter systems}\label{ex-sect}

Let $U_n$ be the \emph{universal Coxeter group} of rank $n$, i.e., the Coxeter group generated by a set of $n$ simple generators subject to no braid relations (so that the product of any two distinct simple generators has infinite order). 
Each permutation of the simple generating set extends to an automorphism of the group, and every automorphism of $U_n$ preserving the set of simple generators arises in this way. 

The power series defined  in the introduction may all be given explicit formulas for universal Coxeter groups. In this section we derive such formulas, which are useful as examples.

\begin{proposition}\label{Uprop}
Suppose $*$ is an involution of $U_n$ which preserves the  group's simple generating set, and which fixes exactly $f$ simple generators.
Then
\[ L_{U_n,*}(q) = \tfrac{1+q^2}{1+q} \cdot \tfrac{1-(f-1)q}{1-(n-1)q^2}
\qquand
P_{U_n}(q) = \tfrac{1+q}{1-(n-1)q}
\qquand
F_{U_n,*}(q) = \tfrac{1+q}{1-(f-1)q}.
\]
\end{proposition}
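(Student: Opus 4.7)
The plan is to establish the three formulas in sequence, computing the Poincar\'e series and fixed‐point series directly from the combinatorics of reduced words in $U_n$, then deducing the formula for $L_{U_n,*}(q)$ from Lusztig's theorem (Theorem \ref{L6-thm}).

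First I would treat $P_{U_n}(q)$. The defining property of $U_n$ is that every element has a unique reduced expression, namely any word in the simple generators with no two consecutive letters equal. Thus the number of elements of length $\ell \geq 1$ is $n(n-1)^{\ell-1}$, and the number of length $0$ is $1$. Summing the resulting geometric series gives
\[
P_{U_n}(q) = 1 + \sum_{\ell \geq 1} n(n-1)^{\ell-1} q^\ell = 1 + \tfrac{nq}{1-(n-1)q} = \tfrac{1+q}{1-(n-1)q}.
\]

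Next I would handle $F_{U_n,*}(q)$. The key observation is that if $w = s_{i_1}s_{i_2}\cdots s_{i_\ell}$ is the unique reduced word of an element of $U_n$, then $w^* = s_{i_1^*}s_{i_2^*}\cdots s_{i_\ell^*}$ is also a reduced word (since $*$ permutes the generators and preserves the ``no consecutive repeat'' condition). By uniqueness of reduced expressions, $w = w^*$ if and only if $i_k^* = i_k$ for every $k$, i.e., every letter of the reduced word is among the $f$ generators fixed by $*$. Hence the fixed subgroup of $*$ in $U_n$ is exactly the standard parabolic subgroup generated by those $f$ generators, which is itself a universal Coxeter group $U_f$. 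Therefore
\[
F_{U_n,*}(q) = P_{U_f}(q) = \tfrac{1+q}{1-(f-1)q}
\]
by the formula already established.

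Finally, the expression for $L_{U_n,*}(q)$ is obtained by invoking Theorem \ref{L6-thm}, which gives $L_{U_n,*}(q) = P_{U_n}(q^2)/F_{U_n,*}(q)$; substituting the two formulas above and rearranging yields
\[
L_{U_n,*}(q) = \tfrac{1+q^2}{1-(n-1)q^2} \cdot \tfrac{1-(f-1)q}{1+q} = \tfrac{1+q^2}{1+q} \cdot \tfrac{1-(f-1)q}{1-(n-1)q^2}.
\]
There is no significant obstacle: all three formulas reduce to straightforward manipulations once the uniqueness of reduced words in $U_n$ is exploited to identify the fixed subgroup with a smaller universal Coxeter group. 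If one prefers to avoid citing Theorem \ref{L6-thm}, an alternative approach to $L_{U_n,*}(q)$ would be to enumerate twisted involutions in $U_n$ directly (they correspond bijectively to palindromic reduced words after applying $*$), compute $\ellinv{*}$ on them using its characterization via conditions (a)--(c), and sum the resulting series; this is slightly more work but entirely elementary in this setting.
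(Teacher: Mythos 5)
Your computations of $P_{U_n}(q)$ and $F_{U_n,*}(q)$ are correct and essentially identical to the paper's (unique reduced words with no adjacent repeats; the fixed subgroup of $*$ is the standard parabolic $U_f$). Where you genuinely diverge is in the third formula: you obtain $L_{U_n,*}(q)$ by quoting Lusztig's identity $L_{U_n,*}(q)=P_{U_n}(q^2)/F_{U_n,*}(q)$ (Theorem \ref{L6-thm}), whereas the paper computes it directly, by classifying the twisted involutions of $U_n$ as palindromic-type words $w=(ab\cdots z)r(z\cdots ba)^*$ with $r=r^*$ (so $\ellinv{*}(w)=1$, contributing $f(n-1)^k$ elements of length $2k+1$) or $w'=(ab\cdots zs)(sz\cdots ba)^*$ with $s\neq s^*$ (so $\ellinv{*}(w')=0$, contributing $(n-f)(n-1)^{k-1}$ elements of length $2k$), and summing the two geometric series. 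Your shortcut is legitimate here --- Proposition \ref{Uprop} lives in the examples section and is not an ingredient in the paper's independent proof of Theorem \ref{L6-thm} for type $A$ affine Weyl groups, so no circularity arises --- and your algebraic verification of the quotient is correct. What the paper's direct enumeration buys is self-containedness and an explicit check of Lusztig's theorem in the universal Coxeter case, including the values of $\ellinv{*}$ on $\I_*(U_n)$, which your sketch only gestures at in the final sentence; if you wanted to carry out that alternative you would need to actually justify that $\ellinv{*}$ takes only the values $0$ and $1$ on these palindromes, which is the one nontrivial point the paper's proof settles.
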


\begin{proof}
Elements of length $k$ in $U_n$ are in bijection with  $k$-letter words in an alphabet of size $n$ with no equal adjacent letters.
Hence when $k \geq 1$ there are $n(n-1)^{k-1}$ elements $w \in U_n$ with $\ell(w) = k$,
which gives the formula for $P_{U_n}(q)$.
Next, 
observe that 
each 
 element in $\I_*(U_n)$ has a (unique) reduced word of the form 
\[w  = (abc\cdots z) r (z\cdots cba)^*\qquad\text{or}\qquad w' = (abc\cdots zs)(sz\cdots cba)^*\] where $a,b,c,\dots,z,r,s \in S$ and $r=r^*$ and $s\neq s^*$. 
We have $\ellinv{*}(w) = 1$ and $\ellinv{*}(w') =0$, and  there are $f\cdot (n-1)^k$ elements $w \in \I_*$ with $\ell(w) = 2k+1$ (when $k\geq 0$) 
and $(n-f)(n-1)^{k-1}$ elements $w' \in \I_*$ with $\ell(w') = 2k$ (when $k\geq 1$).
It follows that 
$ L_{U_n,*}(q) =  \tfrac{q-1}{q+1} \cdot \tfrac{fx}{1-(n-1)q^2} +  \tfrac{1-(f-1)q^2}{1-(n-1)q^2}
$
which simplifies to the given expression.
Finally, it is clear that $F_{U_n,*}(q) = P_{U_f}(q)$.
\end{proof}

We give a formula for $T^{J}_{U_{n}}$ to show that this power series is not always a polynomial in $s$ and $q$. 
\begin{proposition}\label{notpol-prop}
Let $J$ be a set of $j$ simple generators in a universal Coxeter group $U_n$. Then
\[
\ds T^{J}_{U_{n}}(s,q) = 1 + \frac{ (n-j)  q (1-q) (1+s)}{(1-(j-1)q^2) (1-(n-1)q)}.
\]
\end{proposition}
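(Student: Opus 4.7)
The plan is to enumerate the triples $(w,\diamond,K)$ in the defining sum \eqref{LJ-def} for $W=U_n$ and $*=\id$, and then reduce everything to the formulas of Proposition~\ref{Uprop}.

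First, because $U_n$ has no braid relations and no reduced word admits cancellation, an element is an involution if and only if its reduced word is a palindrome, and no nontrivial even-length palindrome is reduced. So $\I(U_n)$ consists of $1$ together with the elements of odd length $2k+1$ whose reduced word has the form $a_1a_2\cdots a_k\,r\,a_k\cdots a_2a_1$ with consecutive letters distinct. Such an involution lies in $W^J$ precisely when $a_1\notin J$, giving exactly $(n-j)(n-1)^k$ elements of length $2k+1$ for each $k\ge 0$. Moreover every non-identity involution in $U_n$ is conjugate to its middle reflection $r$, so has absolute length $\ellprime(w)=1$.

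Next I would determine $(\diamond,K)$. For $w=1$ the associated triple has $K=J$ and contributes $L_{U_j}(q)$ to \eqref{LJ-def}. For $w\ne 1$ and any $s\in J$, a direct inspection of the word $wsw$ in $U_n$ (treating the cases $s\ne a_1$, $s=a_1$, and $k=0$ separately) shows that the reduced form of $wsw$ always has length at least three, so $wsw\notin S$. Consequently $J^\diamond\cap S=\varnothing$ and $K=\varnothing$ for all such $w$. Substituting this data into \eqref{LJ-def} and summing the resulting geometric series $\sum_{k\ge 0}(n-1)^kq^{2k}=1/(1-(n-1)q^2)$ yields
\[
L^J_{U_n}(s,q) \;=\; L_{U_j}(q) \;+\; \frac{s\,(n-j)\,q\,(1-q)\,P_{U_j}(q^2)}{(1+q)\bigl(1-(n-1)q^2\bigr)}.
\]

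Finally, dividing by $L_{U_n}(q)$ and inserting the closed forms from Proposition~\ref{Uprop} should collapse everything to the stated identity. The only nontrivial simplification I expect to encounter is the equality
\[
L_{U_j}(q)-L_{U_n}(q) \;=\; \frac{(n-j)\,q\,(1-q)\,(1+q^2)}{(1+q)\bigl(1-(j-1)q^2\bigr)\bigl(1-(n-1)q^2\bigr)},
\]
which after clearing denominators reduces to the polynomial identity $(1-(j-1)q)(1-(n-1)q^2)-(1-(n-1)q)(1-(j-1)q^2)=(n-j)q(1-q)$; this is verified by direct expansion. Combining this with the $(1+s)$ that appears after adding the $w=1$ and $w\neq 1$ contributions then gives the claimed formula for $T^J_{U_n}(s,q)$.
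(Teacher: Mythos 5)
Your proof is correct and follows essentially the same route as the paper: both enumerate the double coset data---the identity element with $K=J$, and the $(n-j)(n-1)^k$ palindromic involutions of length $2k+1$ with $\ellprime(w)=1$ and $K=\varnothing$---then sum the geometric series and simplify using Proposition \ref{Uprop}. The only difference is organizational: the paper factors out $P_{U_j}(q^2)$ via $L_{U_j}(q)=P_{U_j}(q^2)/P_{U_j}(q)$ before simplifying, whereas you arrange the algebra around the difference $L_{U_j}(q)-L_{U_n}(q)$; both computations check out.
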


\begin{proof}
Let $W = U_n$ and write $S$ for the set of simple generators in $W$.
Assume $w \in W$ is an involution which is also a minimal length $(W_J,W_J)$-double coset representative,
and 
define $\diamond \in \Aut(W)$ and $K \subset J$ so that $(w,\diamond,K)$ is a $(W,J,\id)$-double coset datum.
Since $W$ is a universal Coxeter group, either $w=1$ or $w=s_k\cdots s_2s_1s_2 \cdots s_k$ for some generators $s_i \in S$ such that $s_1s_2\cdots s_k$ is a reduced word with $s_k \notin J$.
In the first case $\diamond = \id$ and $K = J$. In the second case, there are $(n-j)(n-1)^{k-1}$ choices for the simple generators $s_i$ and for each choice we have $\ell(w) = 2k-1$ and $\ellprime(w) = 1$ and $K = \varnothing$.
It follows that 
\[ T_{U_n}^{J}(q) = P_{U_n}(q) \cdot \tfrac{ P_{U_{j}}(q^2)}{P_{U_n}(q^2)} \cdot \( \tfrac{1}{P_{U_j}(q)} + \sum_{k=1}^\infty (n-j)(n-1)^{k-1} \cdot q^{2k-1} \cdot s \cdot \tfrac{1-q}{1+q} \).
\]
By substituting the identity $ \sum_{k=1}^\infty (n-1)^{k-1}  q^{2k-1}
= \tfrac{q}{1-(n-1)q^2}$
and
the formula for $P_{U_n}(q)$ in Proposition \ref{Uprop},
we deduce that 
\[ T_{U_n}^{J}(q) =
  \tfrac{1+q}{1-(n-1)q}
  \cdot
    \tfrac{1+q^2}{1-(j-1)q^2}
    \cdot
      \tfrac{1-(n-1)q^2}{1+q^2}
\cdot
  \(
\tfrac{1-(j-1)q}{1+q}
+
 \tfrac{1-q}{1+q} \cdot \tfrac{(n-j)qs}{1-nq^2} 
 \)
 \]
 which simplifies to the desired equation.
\end{proof}

\subsection{Affine symmetric groups}\label{affSn-sect}

Everywhere in this section, $n$ denotes a positive integer with $n\geq 2$.
Recall  our abstract definition of $\tilde S_{n}$ from the introduction: this is  the Coxeter group with the Coxeter diagram \eqref{diagrams} when $n\geq 3$, or the universal Coxeter group $ U_2$  described in Section \ref{ex-sect} when $n=2$. (While our main results hold with $\tilde S_1 $ defined to be the  group $\{1\}$,  not all statements in this section will make sense in this trivial case.)

As  discussed in  \cite[\S1.12]{Lu}, one may
 construct $\tilde S_n$ 
as a group of permutations in the following way.
Consider  the subgroup of bijections $w: \ZZ \to \ZZ$ satisfying the following two conditions: 
\[w(i+n) = w(i)+n\text{ for all $i \in \ZZ$}\qquand \sum_{i \in [n]} w(i) = \sum_{i \in [n]} i.\]
We call the group of such permutations the \emph{affine symmetric group} (of rank $n$).
When $n\geq 2$ there is a unique isomorphism from $\tilde S_n$ to the affine symmetric group which maps the simple generators  $s_i \in \tilde S_n$ for $i \in \{0,1,\dots,n-1\}$ to the 
permutations of $\ZZ$ given by 
\be\label{zperm-eq}  j \mapsto \begin{cases} j+1 & \text{if }j\equiv i \modu n) \\ j-1 &\text{if }j \equiv i+1 \modu n) \\ j &\text{otherwise.}\end{cases}
\ee
We  identify $\tilde S_n$ with its image under this isomorphism, and let $s_i \in \tilde S_n$ 
for $i \in \{0,1,\dots,n-1\}$ 
denote the permutation \eqref{zperm-eq}.

Given $w \in \tilde S_n$, define 
$\inv(w)  $ as the set 
\[ \inv(w)  = \left\{ (i,j) \in \ZZ\times \ZZ : i <j\text{ and }w(i)>w(j)\right\}.\]
There is now this description of the length function of $\tilde S_n$:

\begin{proposition}[Lusztig \cite{Lu}]
If  $w \in \tilde S_n$ then $\ell(w)$ is the number of equivalence classes in $\inv(w)$, under the equivalence relation  $\sim$ on $\ZZ\times \ZZ$ generated by setting $(i,j) \sim (i+n,j+n)$ for all $i,j$.
\end{proposition}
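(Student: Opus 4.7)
The plan is to convert the count of equivalence classes into a finite sum and then verify the resulting formula by induction, using standard properties of the length function on a Coxeter group. First, I would observe that since $w(i+n)=w(i)+n$, the inversion set $\inv(w)$ is stable under the $\ZZ$-action on $\ZZ\times\ZZ$ given by $(i,j)\mapsto(i+n,j+n)$. This action is free, and $[n]\times\ZZ$ is a fundamental domain, so every $\sim$-class contains a unique representative whose first coordinate lies in $[n]$. Consequently
\[
\#(\inv(w)/\sim)\;=\;N(w)\;:=\;\#\bigl\{(a,b)\in\ZZ\times\ZZ : a\in[n],\ a<b,\ w(a)>w(b)\bigr\},
\]
and it suffices to show $N(w)=\ell(w)$ for every $w\in\tilde S_n$.

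I would then argue by induction on $\ell(w)$. The base case $w=1$ is immediate from $N(1)=0=\ell(1)$. For the inductive step, it is enough to establish that right multiplication by any simple generator $s_i$ changes $N$ by $\pm 1$, with the sign agreeing with that of $\ell(ws_i)-\ell(w)$; since $\ell$ is characterized on $\tilde S_n$ by these two properties (the exchange condition plus $\ell(1)=0$), this forces $N=\ell$ everywhere. The computation of $N(ws_i)-N(w)$ uses that $(ws_i)(j)=w(s_i(j))$, so right multiplication by $s_i$ interchanges the values $w(j)$ and $w(j+1)$ whenever $j\equiv i\pmod n$. For $i\in\{1,\dots,n-1\}$ the swap occurs only at the single pair of positions $(i,i+1)\in[n]^2$, and a direct count of the inversions involving position $i$ or $i+1$ yields $N(ws_i)-N(w)=+1$ when $w(i)<w(i+1)$ and $-1$ otherwise, matching the standard descent criterion.

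The genuinely affine case $i=0$ is the technical heart of the proof. Here right multiplication by $s_0$ simultaneously swaps $w(kn)$ and $w(kn+1)$ for every $k\in\ZZ$, so we must examine the effect on all pairs $(a,b)$ with $a\in[n]$ and with either $a=n$ or $b\in n\ZZ\cup(n\ZZ+1)$. I would organize the comparison as follows: for each $k\neq 0$, the contributions from pairs $(n,kn)$ and $(n,kn+1)$ can be paired off with those from $(1, kn+1)$-style pairs after the swap, and they cancel in a telescoping fashion because of the shift relation $w(j+n)=w(j)+n$. After all such cancellations, one is left with a single uncancelled inversion (the pair that migrates between $\inv(w)$ and $\inv(ws_0)$ inside the fundamental domain), giving $|N(ws_0)-N(w)|=1$ with the sign determined by the sign of $w(n+1)-w(n)$, which is precisely the descent condition at $s_0$.

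The main obstacle is executing this bookkeeping for $i=0$ cleanly, since infinitely many pairs of positions are swapped at once and one must confirm that all the induced changes in the inversion count cancel except for one. Granted that analysis, the induction closes and the proposition follows.
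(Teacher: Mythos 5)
The paper offers no proof of this proposition to compare against: it is quoted directly from Lusztig's book \cite{Lu}. Judged on its own terms, your strategy is the standard one and most of it is sound: $\inv(w)$ is indeed stable under the simultaneous shift, each $\sim$-class has a unique representative with first coordinate in $[n]$, and the key computation --- that right multiplication by $s_i$ changes the class count $N$ by exactly $\pm 1$, with the class of the swapped adjacent pair being the unique one that migrates in or out, so that the sign is $+1$ precisely when $w(i)<w(i+1)$ --- is correct. One remark on economy: the case $i=0$ is only ``the technical heart'' because you normalize the first coordinate into $[n]$, a fundamental domain that $s_0$ does not preserve. If you instead compare equivalence classes directly, the map $(a,b)\mapsto(s_i(a),s_i(b))$ carries $\inv(ws_i)$ to $\inv(w)$ up to reordering, respects $\sim$, and fails to preserve the order condition only on the single class of pairs $\{(kn+i,\,kn+i+1)\}_{k\in\ZZ}$; this makes all $i$, including $i=0$, uniform and eliminates the telescoping entirely.

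The genuine gap is in how you close the induction. You deduce $N=\ell$ from the claim that $N(ws_i)-N(w)$ has the same sign as $\ell(ws_i)-\ell(w)$, and you identify the latter sign via ``the standard descent criterion'' $\ell(ws_i)<\ell(w)\Leftrightarrow w(i)>w(i+1)$. But in this paper that criterion is precisely the corollary deduced \emph{from} the present proposition, so taking it as an input is circular, and proving it independently essentially amounts to redoing the proposition. The standard repair uses exactly your $\pm1$ computation but splits the conclusion into two inequalities. First, $N(w)\le\ell(w)$ follows by telescoping $|N(ws_i)-N(w)|\le 1$ along a reduced word. Second, $\ell(w)\le N(w)$ follows by induction on $N(w)$: if $N(w)=0$ then $w$ is an order-preserving bijection of $\ZZ$, hence a translation, and the normalization $\sum_{i\in[n]}w(i)=\sum_{i\in[n]}i$ forces $w=1$; if $N(w)>0$ then some adjacent descent $w(j)>w(j+1)$ exists, and for $i\equiv j \modu n)$ your computation gives $N(ws_i)=N(w)-1$, whence $\ell(w)\le\ell(ws_i)+1\le N(ws_i)+1=N(w)$ by the inductive hypothesis. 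This yields $N=\ell$ and the descent criterion simultaneously, with no circularity. The $i=0$ bookkeeping you defer is real but routine once the argument is organized this way.
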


It is straightforward to derive the following statements from this proposition:

\begin{corollary}\label{ell-cor}
If $w \in \tilde S_n$, then
$
 \ell(w) = \# \{ (i,j) \in \ZZ\times [n] : i<j\text{ and }w(i)>w(j)\}.
 $
\end{corollary}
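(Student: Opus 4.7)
The plan is to show that the set
\[
\{(i,j) \in \ZZ \times [n] : i < j \text{ and } w(i) > w(j)\}
\]
is in bijection with the set of $\sim$-equivalence classes on $\inv(w)$ described by Lusztig's proposition, whence the equality of cardinalities gives $\ell(w)$.

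First I would verify that $\inv(w)$ is closed under the equivalence relation $\sim$. This is immediate from the affine condition $w(i+n) = w(i) + n$: if $(i,j) \in \inv(w)$, then $i+n < j+n$ and $w(i+n) = w(i)+n > w(j)+n = w(j+n)$, so $(i+n, j+n) \in \inv(w)$.

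Next I would establish that every $\sim$-equivalence class in $\inv(w)$ contains exactly one representative $(i,j)$ with $j \in [n] = \{1,2,\dots,n\}$. For existence, given any $(i,j) \in \inv(w)$, there is a unique $k \in \ZZ$ with $j - kn \in [n]$, and then $(i-kn,\, j-kn) \sim (i,j)$ lies in $\inv(w)$ by the previous step (and still satisfies $i-kn < j-kn$). For uniqueness, if $(i,j)$ and $(i+mn, j+mn)$ both have second coordinate in $[n]$, then $j$ and $j+mn$ both lie in $[n]$, forcing $m=0$.

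Selecting the canonical representative $(i,j)$ with $j \in [n]$ from each class therefore defines a bijection between the equivalence classes on $\inv(w)$ and the set $\{(i,j) \in \ZZ \times [n] : i<j,\ w(i) > w(j)\}$. By the cited proposition of Lusztig, the number of equivalence classes equals $\ell(w)$, which yields the corollary. There is no real obstacle here; the only minor point requiring care is choosing the normalizing coordinate (we pick $j$ rather than $i$) so that each class has a unique representative in the prescribed strip.
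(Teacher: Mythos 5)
Your argument is correct and is exactly the ``straightforward derivation'' the paper has in mind (the paper omits the proof entirely, stating only that the corollary follows from Lusztig's proposition): you verify that $\inv(w)$ is a union of full $\sim$-classes and that each class has a unique representative with second coordinate in $[n]$, so the class count equals the stated cardinality. Nothing is missing.
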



\begin{corollary}\label{des-cor}
If $w \in \tilde S_n$ and $0 \leq i < n$, then 
 $\ell(ws_i) < \ell(w)$ if and only if $w(i) > w(i+1)$.
\end{corollary}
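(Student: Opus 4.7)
The plan is to compute $\ell(ws_i) - \ell(w)$ directly from the inversion description of length, showing that the difference equals $\pm 1$ with sign determined by whether $w(i) > w(i+1)$. I would use the formulation $\ell(w) = |\inv(w)/{\sim}|$ from the proposition preceding Corollary~\ref{ell-cor}, working in the quotient rather than with $\ZZ \times [n]$ representatives; this avoids having to special-case $i = 0$, where $s_0$ does not preserve $[n]$.

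The main tool is the map $\phi \colon \ZZ^2/{\sim} \to \ZZ^2/{\sim}$ sending the class of $(a, b)$ to the class of $(s_i(a), s_i(b))$, which is well-defined because $s_i$ commutes with the shift $j \mapsto j + n$. I would show that $\phi$ restricts to a bijection between $\inv(w)/{\sim} \setminus \{\omega\}$ and $\inv(ws_i)/{\sim} \setminus \{\omega\}$, where $\omega$ denotes the ``swap class'' of $(i, i+1)$. The key observation is that for $(a, b)$ with $a < b$ not of the form $(i+kn, i+1+kn)$, the pair $(s_i(a), s_i(b))$ still satisfies $s_i(a) < s_i(b)$, because $s_i$ only moves each integer by $\pm 1$; meanwhile the inversion condition transports correctly through the identity $(ws_i)(j) = w(s_i(j))$, since $(ws_i)(s_i(a)) = w(a)$ and $(ws_i)(s_i(b)) = w(b)$.

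Finally, the class $\omega$ lies in $\inv(w)/{\sim}$ iff $w(i) > w(i+1)$, and lies in $\inv(ws_i)/{\sim}$ iff $(ws_i)(i) > (ws_i)(i+1)$, i.e., iff $w(i+1) > w(i)$. Since $w$ is a bijection, exactly one of these holds, so $\ell(ws_i) - \ell(w) = +1$ when $w(i) < w(i+1)$ and $-1$ when $w(i) > w(i+1)$, giving the claim. The main technical obstacle is the order-preservation verification: one must check that the only ordered pairs $(a, b)$ with $a < b$ on which $s_i$ reverses order are the swap pairs $(i+kn, i+1+kn)$, which reduces to a short case analysis based on the residues of $a$ and $b$ modulo $n$.
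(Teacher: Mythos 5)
Your proof is correct and is essentially the derivation the paper intends: the paper gives no explicit argument, stating only that the corollary is ``straightforward to derive'' from Lusztig's description of $\ell(w)$ as the number of $\sim$-classes of inversions, and your bijection $\phi$ induced by $(a,b)\mapsto(s_i(a),s_i(b))$, together with the observation that the only order-reversing pairs are the swap pairs $(i+kn,\,i+1+kn)$ forming the single class $\omega$, is exactly that derivation carried out in full. The key verifications (well-definedness on classes, order preservation off the swap class, and that $\omega$ lies in exactly one of $\inv(w)/{\sim}$ and $\inv(ws_i)/{\sim}$) all check out.
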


Recall the definition of twisted absolute length function $\ellinv{*}$ from the introduction, and note that we write $\ellprime = \ellinv{\id}$.
As the first new result in this section, we  establish a formula for $\ellprime $ on 
$\tilde S_n$.

\begin{proposition}\label{ell*-prop} If $w \in \tilde S_n$ is an involution, then
$ \ellprime (w) = \frac{1}{2}(n-\#\{ i \in [n] : w(i)=i\})$.
\end{proposition}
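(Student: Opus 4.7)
The plan is to invoke the uniqueness of $\ellprime$ as the function on $\I(\tilde S_n)$ satisfying axioms (a)--(c) in the introduction, and to show that the candidate $f(w) := \tfrac12(n - \#\{i \in [n] : w(i)=i\})$ satisfies those three axioms.

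The key preliminary observation is that for any involution $w \in \tilde S_n$, the fixed point set $F_w := \{i \in \ZZ : w(i) = i\}$ is a union of complete residue classes modulo $n$. Indeed, if $w(i) = i + kn$, then the affine relation $w(i+kn) = w(i) + kn$ combined with $w^2 = \id$ gives $i = w^2(i) = w(i) + kn = i + 2kn$, forcing $k = 0$; so $w(i) \equiv i \pmod{n}$ already implies $w(i) = i$. Consequently $w$ descends to a bijective involution $\bar w$ on $\ZZ/n\ZZ$ whose fixed points correspond exactly to the residue classes in $F_w$, which shows that $\#\{i \in [n] : w(i)=i\}$ and $n$ have the same parity, so that $f(w) \in \NN$.

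With this in hand, axiom (a), $f(1)=0$, is immediate. For (b), the fixed point set of $s_i w s_i$ equals $s_i(F_w)$, and by \eqref{zperm-eq} the reflection $s_i$ permutes the residue classes of $\ZZ/n\ZZ$ (swapping $i$ and $i+1$, fixing the rest), so $s_i(F_w)$ contains the same number of residue classes as $F_w$. For (c), if $w$ and $ws_i$ both lie in $\I(\tilde S_n)$ then $s_i$ commutes with $w$, and evaluating $w(i+1) = w(s_i(i)) = s_i(w(i))$ in view of \eqref{zperm-eq} and the preliminary observation leaves exactly two cases: either $w(i)=i$ and $w(i+1)=i+1$, or $w(i)=i+1$ and $w(i+1)=i$. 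In the first case Corollary \ref{des-cor} gives $\ell(ws_i)=\ell(w)+1$, and one checks that $ws_i$ swaps the residue classes $i, i+1$ that were pointwise fixed by $w$, so $f(ws_i)=f(w)+1$; in the second case $\ell(ws_i)=\ell(w)-1$, and $ws_i$ pointwise fixes the two residue classes that $w$ had swapped, so $f(ws_i)=f(w)-1$. In both situations $f(ws_i) - f(w) = \ell(ws_i)-\ell(w)$.

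The main obstacle is the case analysis in axiom (c): one has to pin down precisely how a simple reflection $s_i$ can commute with an involution $w$ in affine-permutation terms, and then verify that the corresponding combinatorial change in the number of fixed residue classes matches the change in length with the correct sign. The preliminary observation that $F_w$ is a union of complete residue classes is the bookkeeping device that makes these computations transparent; once (a)--(c) are verified for $f$, uniqueness of the axiomatic characterization yields $f = \ellprime$.
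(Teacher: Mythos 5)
Your proof is correct and follows essentially the same route as the paper: both verify that the candidate function satisfies the three defining axioms of $\ellprime$, with the crux being the case analysis showing that a simple reflection commuting with an involution either swaps two pointwise-fixed residue classes or pointwise fixes two previously swapped ones. Your preliminary observation that the fixed-point set of an involution is a union of complete residue classes (so $w(i) \equiv i \pmod{n}$ forces $w(i)=i$) is the same fact the paper extracts via the computation $i+1 = i+1+2mn$, just packaged up front; the only point to make explicit is that in the second case of axiom (c) you must apply that observation to the involution $ws_i$ (or repeat the direct computation) in order to upgrade $w(i) \equiv i+1 \pmod{n}$ to the exact equality $w(i)=i+1$.
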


\begin{proof}
Let $f(w) =  \frac{1}{2}(n-\#\{ i \in [n] : w(i)=i\})$ for involutions $w \in \tilde S_n$. To show that $f = \ellprime $, it suffices to check that the function $f$ has the three defining properties of $\ellprime $ given in the introduction. The first  property, asserting that $f(1) = 0$, clearly holds.

To show the second property,   that $f$ is constant on each conjugacy class of involutions, note that 
the size of $\{ i \in [n] : w(i) = i\}$ is equal to the number of equivalence classes in the set $\{ i \in \ZZ : w(i) = i\}$ under the equivalence relation $\sim$ on $\ZZ$ generated by setting $i \sim i+n$ for all $i$.
The number of such equivalence classes is unchanged if we replace $w$ by $xwx^{-1}$ for some $x \in \tilde S_n$, since $w(i+kn) = i+kn$ for all $k$ if and only if $xwx^{-1}(x(i)+kn) = x(i)+kn$ for all $k$.
Thus $f(xwx^{-1}) = f(w)$ for all $x \in \tilde S_n$, as required.

For the third property, suppose $w \in \tilde S_n$ is an involution and $i \in \{0,1,\dots,n-1\}$ is such that $s_iw =ws_i$. Then $ws_i$ is also an involution, and we must show that $f(ws_i) - f(w) = \ell(ws_i) - \ell(w)$.
Without loss of generality we may assume that $\ell(ws_i) < \ell(w)$, so that
$w(i) > w(i+1)$ by Corollary \ref{des-cor}.
Let $j = w(i)$ and $k=w(i+1)$. Then 
\[j = ws_i(i+1) = s_iw(i+1) = s_i(k)>k,\]   so
we must have   $j=k+1$ and $k = i+mn$ for some $m \in \ZZ$.
We deduce that in fact $m=0$, since
\[ i+1 = w^2(i+1)  =w(k) = w(i+mn) = w(i) + mn = j+mn = i+1+2mn,\]
so $w(i) = i+1$ and $w(i+1) = i$. It follows from this that if $t \in \ZZ$ then $ws_i(t) = t$ if and only if 
\[ w(t) = t \qquad\text{or}\qquad t \equiv i \modu n) \qquad\text{or}\qquad t \equiv i+1\modu n).\]
Consequently, the number of equivalence classes in $\{ t \in \ZZ : ws_i(t) = t\}$ is exactly two greater than 
the number of equivalence classes in $\{t \in \ZZ : w(t) = t\}$,
so $f(ws_i) - f(w) = - 1 = \ell(ws_i)-\ell(w)$ and we conclude that $f = \ellprime $.
\end{proof}

Let $\tau_n : \ZZ \to \ZZ$ be the map given by  
$
\tau_n(i) = n+1-i$ for $i \in \ZZ$.
Although $\tau_n$ is not itself an element of the affine symmetric group,
conjugation by $\tau_n$ defines an automorphism of $\tilde S_n$, which we denote in this and the next section by $* \in \Aut(\tilde S_n)$; that is, we let
\be
\label{*def}
 w^* = \tau_n \cdot w \cdot \tau_n\qquad\text{for }w \in \tilde S_n.
 \ee
Observe that $s_0^* = s_0$ while $s_i^* = s_{n+1-i}$ for each $i \in [n-1]$. Thus, $*$ acts to flip the Coxeter diagram of $\tilde S_n$ given in \eqref{diagrams} about its vertical axis of symmetry, 
and in particular
 $*$ 
 preserves the set  of simple generators in $\tilde S_n$.
A permutation  $w \in \tilde S_n$ belongs to the set of twisted involutions $\I_*(\tilde S_n)$ if and only if $(w \tau_n)^2 = 1$ or, equivalently, $ (\tau_n w)^2 = 1$.

While most of our results will only concern the ordinary involutions in $\tilde S_n$, twisted involutions relative to the automorphism $*$ will arise naturally in the next section. 
For completeness, we derive here a formula for  the twisted absolute length function attached to this involution.

\begin{proposition}\label{ell**-prop} If $w \in \I_*(\tilde S_n)$ then
$\ds  \ellinv{*}(w) = \left\lfloor \frac{\#\{ i \in [n] :  w(i) \equiv 1-i \modu n)\}}{2} \right\rfloor.$
\end{proposition}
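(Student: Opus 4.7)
My strategy follows the proof of Proposition~\ref{ell*-prop}: I will define
\[ g(w) = \left\lfloor \tfrac{1}{2} \cdot \#\{i \in [n] : w(i) \equiv 1-i \modu n)\}\right\rfloor \]
for $w \in \I_*(\tilde S_n)$ and verify that $g$ coincides with $\ellinv{*}$ by checking the three defining axioms of $\ellinv{*}$ listed in the introduction. The key reformulation is to set $\sigma := w\tau_n$. Since $w \in \I_*$ gives $w^{-1} = \tau_n w \tau_n$, the map $\sigma$ is an involution of $\ZZ$ satisfying the antiperiodicity $\sigma(j+n) = \sigma(j) - n$. Consequently $\sigma$ descends to an involution $\bar\sigma$ of $\ZZ/n\ZZ$, and via the substitution $j = n+1-i$ the count defining $g(w)$ equals exactly the number of fixed points of $\bar\sigma$.

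Axiom (a) is immediate: for $w = 1$ the count reduces to the number of $i \in [n]$ with $2i \equiv 1 \modu n)$, which is $1$ if $n$ is odd and $0$ if $n$ is even; in either case $\lfloor \cdot/2\rfloor = 0$. For axiom (b), the identity $s^*\tau_n = \tau_n s$ (immediate from $s^* = \tau_n s \tau_n$ and $\tau_n^2 = 1$) implies $\sigma \mapsto s\sigma s$ under $w \mapsto sws^*$, so $\bar\sigma$ is replaced by its conjugate $\bar s\bar\sigma\bar s$, with the same number of fixed points.

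For axiom (c), the hypothesis $ws \in \I_*$ is equivalent to $s^*$ commuting with $\sigma$, so that $\sigma' := (ws)\tau_n = \sigma s^*$ is again an involution. Setting $s = s_i$, the induced map $\bar{s^*}$ is a transposition of two adjacent residues $a, a+1 \in \ZZ/n\ZZ$ determined by the index of $s^*$. Since $\bar\sigma$ is an involution commuting with this transposition, either (A) $\bar\sigma$ fixes both $a$ and $a+1$, or (B) $\bar\sigma$ swaps them; since $\bar\sigma$ and $\bar\sigma' = \bar\sigma\bar{s^*}$ agree on all other residues, Case (A) decreases the fixed-point count by $2$ while Case (B) increases it by $2$. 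The crux is to show Case (A) holds precisely when $\ell(ws) < \ell(w)$: assuming this inequality, Corollary~\ref{des-cor} gives $w(i) > w(i+1)$, and the identity $s^*w = ws$ forces $s^*$ to swap $w(i)$ and $w(i+1)$. Writing $s^* = s_{i'}$ and noting that $s_{i'}$ swaps the consecutive pairs $(mn+i',mn+i'+1)$, this pins down $w(i) \equiv i'+1$ and $w(i+1) \equiv i' \modu n)$. Substituting into $\sigma = w\tau_n$ translates these conditions directly into $\bar\sigma(a) = a$ and $\bar\sigma(a+1) = a+1$ for $a = i'$---Case (A). Hence $g(ws) - g(w) = -1 = \ell(ws) - \ell(w)$ in this case, and the opposite inequality follows symmetrically.

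The main technical obstacle will be the indexing bookkeeping in the last step: one must separately handle the sub-cases $s^* = s$ (occurring when $s = s_0$, and also when $s = s_{n/2}$ for even $n$) and $s^* \neq s$, in each identifying the correct pair of residues $a, a+1$ swapped by $\bar{s^*}$ and verifying that the forced residues of $w(i)$ and $w(i+1)$ give the required fixed points of $\bar\sigma$. The verification is otherwise structurally identical across sub-cases.
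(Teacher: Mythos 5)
Your proof is correct and follows essentially the same route as the paper's: both verify the three defining axioms of $\ellinv{*}$, reinterpreting the count as the number of fixed residues of the antiperiodic involution obtained by composing $w$ with $\tau_n$ (you use $w\tau_n$ where the paper uses $\tau_n w$), and both handle axiom (c) by pinning down the residues of $w(i)$ and $w(i+1)$ modulo $n$ to show the fixed-point count changes by exactly $2$. Your packaging of the last step as a dichotomy for an involution commuting with the transposition $\bar{s^*}$ is a clean equivalent of the paper's reduction to the argument of Proposition \ref{ell*-prop}, and the sub-case analysis you flag as a remaining obstacle in fact goes through uniformly.
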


\begin{proof}
Let $f(w) =  \left\lfloor \#\{ i \in [n] :  w(i) \equiv 1-i \modu n)\} / 2 \right\rfloor$ for twisted involutions  $w \in \I_*(\tilde S_n)$. We argue that  $f=\ellinv{*}$ as in the proof of Proposition \ref{ell*-prop}, by showing that $f$ has the three properties given in the introduction which uniquely determine $\ellinv{*}$.

It holds that $f(1) = 0$ since the set $\{ i \in [n] :  i \equiv 1-i \modu n)\}$ has at most one element.
Let $g(w)$ for $w \in \I_*(\tilde S_n)$ denote the number of equivalence classes in the set $\{ i \in \ZZ : (\tau_n w)(i) \equiv i \modu n)\}$ under the equivalence relation on $\ZZ$ generated by setting $i \sim i+n$ for all $i$,
and observe that $f(w) = \lfloor g(w)/2 \rfloor$.
Since $\tau_n( x^*wx^{-1})= x(\tau_n w) x^{-1}$ for all $x \in \tilde S_n$ and $w \in \I_*(\tilde S_n)$,
it follows as in the proof of Proposition \ref{ell*-prop} that the function $g$ is constant on $*$-twisted conjugacy classes, so the same is true of $f$. 

Finally, suppose $w \in \I_*(\tilde S_n)$  and $i \in \{0,1,\dots,n-1\}$ are such that $s_i^*w =ws_i \in \I_*(\tilde S_n)$. To show that $f=\ellinv{*}$, it suffices to check that $f(ws_i) - f(w) = \ell(ws_i) - \ell(w)$.
Without loss of generality assume $\ell(ws_i) > \ell(w)$, so that
$w(i) < w(i+1)$ by Corollary \ref{des-cor}. In this case we have $(\tau_nw)(i) > (\tau_n w)(i+1)$,
so since $(\tau_n w)^2=1$,
it 
follows exactly as in the proof of Proposition \ref{ell*-prop} that $(\tau_nw)(i) \equiv i+1 \modu n)$ and $(\tau_nw)(i+1) \equiv i \modu n)$. One checks that consequently $g(ws_i) = g(w)+2$, which implies that $f(ws_i) -f(w) = 1 =\ell(ws_i) - \ell(w)$ as required.
\end{proof}

\begin{remark} Results of MacDonald  \cite{MacDonald} (see also \cite[Theorem 3.10]{Steinberg}) show how to compute the power series $F_{W,*}(q)$ when $W$ is any finite or affine Weyl group. For example, if  $* \in \Aut(\tilde S_n)$ is given by \eqref{*def}, 
then one can check using \cite{MacDonald} that
\[
 F_{ \tilde S_n,*}(q) 
 =
 \tilde W_{C_l}(q^2,q^e,q)
 =
\prod_{k=1}^{n-1} \tfrac{1-(-q)^{k+1}}{(1+(-1)^k q)(1+(-q)^k)}
\]
where in the second expression  
we use the notation of  \cite[\S3]{MacDonald}
and
define 
$ l = \lfloor \tfrac{n}{2}\rfloor $
and
$ e=2 - (-1)^n
$.
Theorem \ref{L6-thm} then implies
\[
L_{\tilde S_{n},*}(q) 
=
\prod_{k=1}^{n-1} \tfrac{1+(-q)^{k+1}}{(1-(-1)^kq)(1-(-q)^{k})}
.\] 
It would interesting to know the bivariate analogue of this formula, that is, for power series $T_{\tilde S_n,*}^J(s,q)$ with $J = \{ s_i : i\in [n-1]\}$ and $*$ as in \eqref{*def}.
Many of the details required to compute this are provided by  results in the next section, but we will not   undertake this calculation.
 \end{remark}

\subsection{Combinatorics of  coset representatives}

Continue to let $n\geq 2$ be an integer.
For each permutation $w\in S_n$, there exists a unique element $\tilde w \in \tilde S_n$ whose restriction to $[n]$ coincides with $w$.
The map $w \mapsto \tilde w$ is a group isomorphism from $S_n$ to  
 the standard parabolic subgroup of $\tilde S_n$ generated by $\{s_1,s_2,\dots,s_{n-1}\}$,
 and we identify $S_n$ with its image under this map. This identification makes consistent our convention of writing $s_i$ for the simple generators of both $S_n$ and $\tilde S_n$,
and allows us to
speak 
of
 $(S_n,S_n)$-double cosets in $\tilde S_n$. 
 
Each $(S_n,S_n)$-double coset contains a unique element of minimal length (see   \cite[\S2.1]{GP}), which by Corollary \ref{des-cor} may be characterized by the following condition.

\begin{proposition}\label{minlen-prop} A permutation $w \in \tilde S_n$ is the unique element of minimal length in its $(S_n,S_n)$-double coset if and only if 
$w(1) < w(2) < \dots < w(n)$ and  
$w^{-1}(1) < w^{-1}(2) < \dots < w^{-1}(n).$
\end{proposition}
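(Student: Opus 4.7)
The plan is to reduce the double-coset minimality condition to two one-sided conditions and then apply Corollary \ref{des-cor} twice.

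By the standard theory of parabolic double cosets in Coxeter groups (see e.g.\ \cite[\S2.1]{GP}), an element $w \in \tilde S_n$ is the unique minimal length representative of its $(S_n,S_n)$-double coset if and only if it is simultaneously the minimal length element of its left coset $wS_n$ and of its right coset $S_nw$. Equivalently, $\ell(ws_i) > \ell(w)$ and $\ell(s_iw) > \ell(w)$ for every simple generator $s_i \in S_n = \langle s_1,\dots,s_{n-1}\rangle$.

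For the first family of inequalities, Corollary \ref{des-cor} directly gives that $\ell(ws_i) > \ell(w)$ is equivalent to $w(i) < w(i+1)$. Letting $i$ range over $\{1,2,\dots,n-1\}$ yields precisely the condition $w(1) < w(2) < \cdots < w(n)$.

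For the second family, I would use the fact that $\ell(s_iw) = \ell(w^{-1}s_i)$ and $\ell(w^{-1}) = \ell(w)$, so $\ell(s_iw) > \ell(w)$ is equivalent to $\ell(w^{-1}s_i) > \ell(w^{-1})$. Applying Corollary \ref{des-cor} to $w^{-1}$ in place of $w$, this becomes $w^{-1}(i) < w^{-1}(i+1)$, and letting $i$ range over $\{1,2,\dots,n-1\}$ gives $w^{-1}(1) < w^{-1}(2) < \cdots < w^{-1}(n)$. Combining the two equivalences proves the proposition. There is no real obstacle here: the only point requiring care is to justify that Corollary \ref{des-cor} applies on the left by passing through $w^{-1}$, which is immediate from $\ell(w) = \ell(w^{-1})$.
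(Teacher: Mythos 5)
Your proof is correct and follows essentially the same route as the paper, which likewise deduces the proposition from the standard characterization of minimal $(W_J,W_K)$-double coset representatives in \cite[\S2.1]{GP} together with Corollary \ref{des-cor}. The reduction of the left-descent condition to $w^{-1}$ via $\ell(s_iw)=\ell(w^{-1}s_i)$ and $\ell(w)=\ell(w^{-1})$ is exactly the intended (and correct) way to apply that corollary on the left.
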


Throughout this section $*$ remains the involution of $\tilde S_n$ defined by \eqref{*def}.
For involutions or $*$-twisted involutions, the preceding proposition simplifies to the following:

\begin{corollary}\label{minlen-cor}
Suppose $w \in \tilde S_n$ is such that $w^{-1} \in \{ w,w^*\}$. Then $w$ is the unique element of minimal length in its $(S_n,S_n)$-double coset if and only if $w(1) < w(2) <\dots <w(n)$.
\end{corollary}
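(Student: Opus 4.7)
The plan is to derive this corollary as a direct consequence of Proposition \ref{minlen-prop}. The backward implication is immediate: if $w$ is the minimal length element in its $(S_n,S_n)$-double coset, then Proposition \ref{minlen-prop} forces $w(1) < w(2) < \dots < w(n)$ regardless of any hypothesis on $w^{-1}$. So the substance of the corollary is the forward direction, which requires showing that, under the extra assumption $w^{-1} \in \{w, w^*\}$, the single monotonicity condition $w(1) < w(2) < \dots < w(n)$ already forces the companion condition $w^{-1}(1) < w^{-1}(2) < \dots < w^{-1}(n)$ that Proposition \ref{minlen-prop} demands.

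I would split into two cases. If $w^{-1} = w$, there is nothing to prove, since $w^{-1}(i) = w(i)$ and the desired monotonicity is identical to the assumed one. If instead $w^{-1} = w^*$, I would unwind the definition \eqref{*def}: since $w^* = \tau_n w \tau_n$ and $\tau_n(i) = n+1-i$, I get the explicit formula
\[ w^{-1}(i) \;=\; w^*(i) \;=\; \tau_n\bigl(w(n+1-i)\bigr) \;=\; n+1 - w(n+1-i) \]
for all $i \in \ZZ$. Restricting to $i \in [n]$, the map $i \mapsto n+1-i$ is a decreasing bijection $[n] \to [n]$, and $w$ is increasing on $[n]$ by hypothesis; composing with the further decreasing map $x \mapsto n+1-x$ therefore yields an increasing function of $i$, which gives $w^{-1}(1) < w^{-1}(2) < \dots < w^{-1}(n)$.

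With both monotonicity conditions established in each case, Proposition \ref{minlen-prop} immediately yields that $w$ is the unique minimal length element of its double coset, completing the proof. There is no real obstacle here; the corollary is essentially a bookkeeping consequence of the symmetry built into the hypothesis $w^{-1} \in \{w,w^*\}$, combined with the fact that the diagram automorphism $*$ acts on one-line notation via the order-reversing involution $\tau_n$ on $[n]$.
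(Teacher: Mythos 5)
Your proof is correct and is exactly the intended derivation: the paper states this corollary without proof as an immediate simplification of Proposition \ref{minlen-prop}, and your case analysis (trivial when $w^{-1}=w$; when $w^{-1}=w^*$, the formula $w^{-1}(i)=n+1-w(n+1-i)$ composes two order-reversing maps with the increasing $w$ on $[n]$ to recover the second monotonicity condition) is precisely the bookkeeping the authors leave to the reader.
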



Let $\Imin_n$ (respectively, $\Imin'_n$) denote the set of involutions (respectively, $*$-twisted involutions) that are minimal length $(S_n,S_n$)-double coset representatives in $\tilde S_n$; i.e., set
\be\label{omega-def}
\ba
\Imin_n &= \left\{ w \in \tilde S_n : w^2=1 \text{ and } \ell(s_i w) = \ell(ws_i) = \ell(w)+1 \text{ for all } i \in [n-1]\right\}
\\
\Imin'_n &= \left\{ w \in \tilde S_n : (\tau_n w)^2=1\text{ and } \ell(s_i w) = \ell(ws_i) = \ell(w)+1 \text{ for all } i \in [n-1]\right\}
.
\ea
\ee
Note that requiring these sets to consist of minimal length double coset representatives is slightly redundant, as  a ($*$-twisted) involution in $\tilde S_n$ is a minimal length $(S_n,S_n)$-double coset representative if and only if it is  a minimal length left (equivalently, right) $S_n$-coset representative. Recall that $\Imin_n$ indexes the sum defining $T_{\tilde S_n}(s,q)$.

The main goal of this section is to derive from Corollary \ref{minlen-cor} a more concrete description of $\Imin_n$ and $\Imin'_n$.
In more detail,
given a permutation $w \in \tilde S_n$, define 
\[\lambda_i(w) = \lfloor \tfrac{w(i)-1}{n} \rfloor
\qquand \lambda(w) = (\lambda_1(w),\lambda_2(w),\dots,\lambda_n(w)).\]
The set of integers $\ZZ$ decomposes as a disjoint union of shifted copies of $[n]$, and the number $\lambda_i(w)$ records which copy contains the image of $i$ under $w$. In particular, $\lambda(w)$ may be alternatively defined as the sequence of numbers such that
 $w(i) \in n\cdot \lambda_i(w) + [n]$ for each $i \in [n]$. 
We will first show that the sequence $\lambda(w)$ uniquely determines the element $w \in \Imin_n\cup \Imin'_n$, and then   derive a formula for the length of $w$ in terms of $\lambda(w)$.

Given any integer sequence $a = (a_1,a_2,\dots,a_n)$, let $\beta(a)$ be the sequence of indices 
which record the final positions of the contiguous blocks of equal entries in $a$; explicitly, 
$\beta(a) = (b_1,b_2,\dots,b_m) $ is  the strictly increasing sequence of positive integers  such that, setting $b_0=0$, we have
\[  a_{b_{i-1}+1} = a_{b_{i-1}+2} =\dots = a_{b_{i}} \neq a_{b_{i}+1}\text{ for all }i \in [m-1]
\qquand b_m = n.\]
 For example, if $a = (0,0,-7,-7,-7,2)$ then $\beta(a)= (2,5,6).$
 If $a$ is the empty sequence then  we define the sequence $\beta(a)$ to be likewise  empty.
Recall that a sequence $a$ is \emph{weakly increasing} if $a_1 \leq a_2\leq \dots \leq a_n$ and \emph{antisymmetric} if $a_i + a_{n+1-i} = 0$ for all $i \in [n]$.
Observe that the central term in an antisymmetric sequence  of odd length must be zero.

 \begin{lemma}\label{a-lem}
 Fix an integer sequence $a=(a_1,a_2,\dots,a_n)$ and 
let $\beta(a) = (b_1,b_2,\dots,b_m)$. Set $b_0 = 0$ and
  for each $i \in [m]$ define $v_i = a_{b_{i-1}+1} =a_{b_{i-1}+2}=\dots = a_{b_i}$ and also define
 \[  I_i = \{ k \in [n] : b_{i-1} < k \leq b_i\}\qquand J_i  = n+1-I_i.\]
  Finally let $w_a : \ZZ \to \ZZ$ 
be the unique  map   $\ZZ \to \ZZ$ which 
 restricts to an order-preserving bijection $nk+I_i \to n(k + v_i) + J_i$ for each $i \in [m]$ and $k \in \ZZ$.
 The following properties then hold:
 \ben
 
 \item[(a)] $w_a  \in \tilde S_n$ if and only if  $\sum_{i=1}^n a_i = 0$, and in this case $\lambda(w_a)=a$.
 
 \item[(b)] $w_a \in \Imin'_n$ if and only if $a$ is weakly increasing and $\sum_{i=1}^n a_i = 0$.
 
 \item[(c)] $w_a \in \Imin_n$ if and only if $a$ is weakly increasing and antisymmetric.

 \een
 \end{lemma}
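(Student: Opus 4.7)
The map $w_a$ is well-defined as a bijection $\ZZ \to \ZZ$ because $\{nk + I_i : k \in \ZZ,\ i \in [m]\}$ and $\{n(k+v_i)+J_i : k\in\ZZ,\ i\in[m]\}$ are both partitions of $\ZZ$ (using that $\{I_i\}$ and $\{J_i\}$ each partition $[n]$), and the lemma prescribes an order-preserving bijection between the matched parts. Along the way I would record the explicit formula
\[
w_a(b_{i-1}+k) \;=\; nv_i + (n-b_i) + k \qquad\text{for } i \in [m],\ k \in [b_i - b_{i-1}],
\]
extended to all of $\ZZ$ by the relation $w_a(j+n) = w_a(j) + n$, which is immediate from the construction.

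For part (a), the first defining property of $\tilde S_n$ is the periodicity just noted. The normalization $\sum_{j\in[n]} w_a(j) = \tbinom{n+1}{2}$ is verified block-by-block: summing the explicit formula over $j \in [n]$ and using $\sum_{j \in I_i} j + \sum_{j \in J_i} j = (n+1)|I_i|$ yields $\sum_{j\in[n]} w_a(j) = n\sum_i |I_i| v_i + \tbinom{n+1}{2} = n\sum_i a_i + \tbinom{n+1}{2}$, so $w_a \in \tilde S_n$ iff $\sum a_i = 0$. In this case, for $j \in I_i$ we have $w_a(j) \in nv_i + J_i \subset nv_i + [n]$, hence $\lambda_j(w_a) = v_i = a_j$, giving $\lambda(w_a) = a$.

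For part (b), assume $\sum a_i = 0$. By Corollary~\ref{minlen-cor}, $w_a$ is a minimal-length $(S_n,S_n)$-double coset representative iff $w_a(1) < \cdots < w_a(n)$. Within each block $I_i$ this is automatic, so the issue is the comparison of $w_a(b_i)$ and $w_a(b_i+1)$, which the explicit formula shows is strict increase iff $v_i < v_{i+1}$ iff $v_i \leq v_{i+1}$ (as $v_i \neq v_{i+1}$ by the definition of $\beta$). Thus monotonicity is equivalent to $a$ being weakly increasing. It remains to show that $w_a$ is automatically a $*$-twisted involution: I would compute both $w_a^{-1}(j')$ for $j' \in J_i$ (using order-reversal of the block bijection) and $(w_a)^*(j') = n+1 - w_a(n+1-j')$ (using $n+1-J_i = I_i$), and check that both expressions equal $j' - nv_i + b_i + b_{i-1} - n$. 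This gives $(w_a)^{-1} = (w_a)^*$, completing (b).

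For part (c), given (b) and the fact that $w_a$ is always a $*$-twisted involution, $w_a$ is an ordinary involution iff $(w_a)^* = w_a$. A direct calculation from the definition of $\lambda$ gives the general identity $\lambda_j(w^*) = -\lambda_{n+1-j}(w)$; applying it to $w_a$ shows $(w_a)^* = w_a$ forces $a_j = -a_{n+1-j}$, i.e., $a$ antisymmetric. Conversely, if $a$ is antisymmetric (which subsumes $\sum a_i = 0$), the block data satisfies the symmetries $I_{m+1-i} = J_i$, $b_{m+1-i} = n - b_{i-1}$, and $v_{m+1-i} = -v_i$; plugging these into $n+1 - w_a(n+1-j)$ for $j \in I_i$ (so $n+1-j \in I_{m+1-i}$) reproduces $w_a(j)$, proving $(w_a)^* = w_a$. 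The main obstacle will be the index bookkeeping required in these two symmetric computations — verifying the explicit identities between block endpoints under reflection — but the result is intuitively clear from the built-in reflective symmetry of the construction.
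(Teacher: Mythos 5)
Your proposal is correct and follows essentially the same route as the paper: well-definedness via the two partitions of $\ZZ$, the block-by-block sum computation for (a), reduction via Corollary \ref{minlen-cor} to verifying $w_a^* = w_a^{-1}$ for (b) and $w_a = w_a^{-1}$ for (c), and the block symmetries $J_i = I_{m+1-i}$, $v_i = -v_{m+1-i}$ for antisymmetric $a$. The only (cosmetic) difference is that you carry out the verifications in explicit coordinates, whereas the paper argues via the uniqueness of the order-preserving block description of $w_a^{-1}$.
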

 
 \begin{remark}
 Before beginning the proof of this lemma, we give an example of the map $w_a$.
 Let $n=3$ and $a=(3,3,-6)$, so that $\beta(a) = (2,3)$ and $(v_1,v_2) = (3,-6)$.
 Then
 \[ I_1 = \{1,2\} \leftrightarrow J_1 = \{2,3\} \qquand  I_2 = \{3\} \leftrightarrow J_2 = \{1\}\]
 and $w_a$ is the unique  permutation of $\ZZ$ with $w_a(i+3) = w_a(i)+3$ for all $i$ and with
 \[w_a(1) =11 = 2 + nv_1 \qquand w_a(2) = 12 = 3 + nv_1 \qquand w_a(3) = -17 = 1 + nv_2.
 \]
 \end{remark}
 
 \begin{proof}
 Observe that both
$\{ nk + I_i : (i,k) \in [m]\times \ZZ\}$ and $\{ nk+J_i : (i,k)  \in [m]\times \ZZ\}$ are partitions of $\ZZ$ into disjoint subsets, and that each set in the first partition is in bijection via $w_a$ with exactly one set in the second. It follows that the map $w_a$ is in fact a permutation of $\ZZ$.

By construction $w_a(t+n) = w_a(t) + n$ for all $t \in \ZZ$,
so, in light of the definition of $\tilde S_n$ in Section \ref{affSn-sect},
part (a) follows by observing that 
\[ \sum_{i \in [n]} w_a(i) = \sum_{i \in [m]} \sum_{t \in I_i} w_a(t) = \sum_{i \in [m]}  \sum_{t \in J_i}(nv_i+t)
= \sum_{i \in[n]} (i + na_i).\] 
For the remaining parts, assume $\sum_{i =1}^n a_i = 0$ so that $w \in \tilde S_n$.
It is clear from the definition of $w_a$ that  $\lambda(w_a) = a$
and that 
 $w_a(1) < w_a(2) < \dots < w_a(n)$ if and only if 
$a$ is weakly increasing.
Therefore, by Corollary \ref{minlen-cor}, to prove (b)  it suffices to check that $w_a^* = w_a^{-1}$, and to prove (c) it suffices to check that $w_a=w_a^{-1}$ if and only if $a$ is antisymmetric.

For part (b), note that the map $\tau_n$ given before \eqref{*def} restricts to  an order-reversing bijection $I_i \to J_i$ and $J_i \to I_i$ for each $i \in [m]$. Using this observation, one checks that  $w_a^* = \tau_n \cdot w_a \cdot \tau_n$  restricts to an order-preserving bijection $n(k+v_i) + J_i \to nk + I_i$ for each $i \in [m]$ and $k \in \ZZ$.  As $w_a^{-1}$ is clearly the unique map $\ZZ \to \ZZ$ with this description, we must have $w_a^* = w_a^{-1}$ as desired.
For part (c), note that $w_a=w_a^{-1}$ if and only if 
  $v_i = -v_{m+1-i}$ and $J_i = I_{m+1-i}$ for each $i \in [m]$, which    holds precisely when $a$ is antisymmetric.
 \end{proof}

 We may now prove the main result of this section.
  
  \begin{theorem}\label{Omega-thm}
The maps
$w \mapsto \lambda(w)
$
and 
$a\mapsto w_a$ 
are inverse  bijections between the following:
\begin{itemize}
\item[(i)] $\Imin'_n \leftrightarrow \{\text{ weakly increasing sequences of $n$ integers whose terms sum to zero }\}$.
\item[(ii)] $  \Imin_n \leftrightarrow \{\text{ weakly increasing, antisymmetric  sequences of $n$ integers }\}$
\end{itemize}
\end{theorem}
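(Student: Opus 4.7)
The plan is to leverage Lemma~\ref{a-lem}, which already provides one direction of each bijection: part~(a) gives $\lambda(w_a) = a$ whenever $\sum_i a_i = 0$, and parts~(b),~(c) show that $a \mapsto w_a$ sends the two claimed target sets into $\Imin'_n$ and $\Imin_n$ respectively, so this composition is the identity on each target set. All that remains is to verify that $\lambda$ carries $\Imin'_n$ and $\Imin_n$ into the corresponding target sets, and that $w = w_{\lambda(w)}$ for every such $w$.

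For any $w \in \Imin_n \cup \Imin'_n$, Corollary~\ref{minlen-cor} gives $w(1) < w(2) < \cdots < w(n)$, so $\lambda(w)$ is weakly increasing. Writing $w(i) = n\lambda_i(w) + r_i$ with $r_i \in [n]$, the residues $(r_1, \dots, r_n)$ are a permutation of $[n]$, and the affine condition $\sum_{i \in [n]} w(i) = \sum_{i \in [n]} i$ forces $\sum_i \lambda_i(w) = 0$. When $w \in \Imin_n$ I also need $\lambda(w)$ antisymmetric: the relation $w^2 = 1$ combined with affine periodicity $w(t+n) = w(t)+n$ rewrites $w^2(i) = i$ as $w(r_i) + n\lambda_i(w) = i$, giving $\lambda_{r_i}(w) = -\lambda_i(w)$. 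Thus the multiset $\{\lambda_i(w)\}$ is closed under negation; combined with the weakly increasing property this forces $\lambda_i(w) + \lambda_{n+1-i}(w) = 0$ for all $i$.

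The main step, and the principal obstacle, is to establish $w = w_{\lambda(w)}$. Set $a = \lambda(w)$ with blocks $I_1,\dots,I_m$ and values $v_1 < \cdots < v_m$, in the notation of Lemma~\ref{a-lem}. Since both $w$ and $w_a$ lie in $\tilde S_n$ (hence are determined by their restriction to $[n]$) and both are order-preserving on each $I_i$, it suffices to show $w(I_i) = nv_i + J_i$, where $J_i = n+1-I_i$. For $t \in I_i$ write $w(t) = nv_i + r$ with $r \in [n]$. In the twisted case $w \in \Imin'_n$, the identity $w^{-1} = \tau_n w \tau_n$ yields $w(n+1-w(t)) = n+1-t$; substituting and using affine periodicity gives $w(n+1-r) = n+1-t+nv_i$, so $\lambda_{n+1-r}(w) = v_i$, placing $n+1-r$ in $I_i$ and hence $r$ in $J_i$. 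In the involution case $w \in \Imin_n$, $w^2 = 1$ with periodicity gives $w(r) = t - nv_i$, so $\lambda_r(w) = -v_i$; by antisymmetry of $a$ this places $r$ in the unique block with value $-v_i$, which is $n+1-I_i = J_i$. Either way $w(I_i) \subseteq nv_i + J_i$, and equality follows by counting; the twisted case is a direct manipulation of the defining identity, while the involution case genuinely needs the antisymmetry of $\lambda(w)$ from the previous paragraph.
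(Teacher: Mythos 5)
Your proposal is correct and follows essentially the same route as the paper's proof: both reduce the theorem to showing $w = w_{\lambda(w)}$ via Lemma \ref{a-lem}, derive antisymmetry of $\lambda(w)$ for $w \in \Imin_n$ from $w^2=1$ in the same way, and establish $w(I_i) = nv_i + J_i$ by the same manipulation of the identities $w^{-1}=\tau_n w\tau_n$ and $w^2=1$. The only cosmetic difference is that you argue elementwise and finish by counting, where the paper phrases the same containment at the level of the sets $K_i = -nv_i + w(I_i)$ and finishes by noting that $\{J_i\}$ and $\{K_i\}$ are both partitions of $[n]$.
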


\begin{proof}
Suppose $w \in \tilde S_n$ is the unique element of minimal length in its $(S_n,S_n)$-double coset,
and let $a=(a_1,a_2,\dots,a_n) = \lambda(w)$. Observe that since $w(1) < w(2) < \dots <w(n)$ by Proposition \ref{minlen-prop}, the sequence $a$ must be weakly increasing.

To prove the theorem it suffices by Lemma \ref{a-lem} to show that $w=w_a$ when $w$ belongs to $\Imin'_n$ or $\Imin_n$.
Towards this end, let $\beta(a) = (b_1,b_2,\dots,b_m)$ and for each  $i \in [m]$ define  
\[ v_i \in \ZZ \qquand I_i, J_i \subset [n]\]  exactly as in Lemma \ref{a-lem},
and additionally let $K_i = -nv_i + w(I_i)$.
Observe that 
$\tau_n(I_i) = J_i$ and 
     $[n] = K_1 \cup K_2 \cup \dots \cup K_m$, where the union is disjoint.
Since $w(i+n) = w(i)+n$ for all $i \in \ZZ$ and  since $w(1) < w(2) < \dots <w(n)$,  the map $w$  restricts to an order-preserving bijection $I_i \to nv_i + K_i$ for each $i \in [m]$.
Therefore, 
to prove that $w=w_a$ when $w \in \Imin'_n \cup \Imin_n$, we just need to show in this case that $ J_i = K_i$ for each $i \in [m]$.

For this, first suppose that $w \in \Imin'_n$ so that $w^{-1} = w^* = \tau_n \cdot w \cdot \tau_n$. Then $(w\tau_n)^2 = 1$
so
 the composition 
\[ J_i \xrightarrow{\tau_n} I_i \xrightarrow{w} nv_i + K_i \xrightarrow{\tau_n} -nv_i + \tau_n(K_i) \xrightarrow{w} -nv_i + (w\tau_n)(K_i)
\]
is the identity map and in particular $J_i = -nv_i + (w\tau_n)(K_i)$. This implies that \[ (w\tau_n)(K_i) = nv_i + J_i \subset nv_i + [n],\] so we must have $\tau_n(K_i) \subset I_i$ which implies in turn  that $K_i \subset \tau_n(I_i) = J_i$.
Thus $K_i \subset J_i$ for all $i \in [m]$, so since both $\{ J_i\}$ and $\{K_i\}$ are partitions of $[n]$ into disjoint subsets, it must hold that $J_i = K_i$ for all $i \in [m]$ as required.

Next suppose that $w \in \Imin_n$ so that $w^2=1$. Recall that  $\overline{w}$ denotes the image of $w$ under the homomorphism $\tilde S_n \to S_n$, and observe that $\overline w$ here has the formula  $ i \mapsto  w(i) - na_i$ for $i \in [n]$. Since $\overline w$ is also an involution,
 for each $i \in [n]$ it holds that
\[ i = w^2(i) = w(\overline{w}(i)+na_i) =  \overline{w}^2(i) + n(a_i + a_{\overline w(i)})= i +  n(a_i + a_{\overline w(i)}).\]
Hence $a_i = -a_{\overline{w}(i)}$, so whenever $v \in \ZZ$ appears in the sequence $a$, the number $-v$ also appears, with the same multiplicity.
As $a=(a_1,a_2,\dots,a_n)$ is already weakly increasing, this observation implies that $a$ is   antisymmetric.
Thus $a_i = -a_{n+1-i}$ for each $i \in [n]$, so (by the definition of $v_i$, $I_i$, and $J_i$ in Lemma \ref{a-lem}) it must hold that $v_i = -v_{m+1-i}$ and $J_i = I_{m+1-i}$ for each $i \in [m]$.
To now deduce that $w=w_a$, we note that 
since $w^2=1$,  we have must $I_i = nv_i + w(K_i)$ for each $i \in [m]$ so \[w(K_i) = -nv_i + I_i = nv_{m+1-i} + I_i \subset nv_{m+1-i} + [n]\] which implies $K_i \subset I_{n+1-i} = J_i$.
This containment  can hold for all $i \in [m]$ only if $J_i = K_i$, again since   $\{ J_i\}$ and $\{ K_i\}$ are both partitions of $[n]$.
\end{proof}

While not every involution in $\tilde S_n$ is a $*$-twisted involution, the preceding theorem  shows that the following inclusion does hold, and is strict when $n>2$:
\begin{corollary}
For all $n\geq 2$ it holds that $\Imin_n \subset \Imin'_n$.
\end{corollary}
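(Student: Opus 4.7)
The plan is to read both assertions directly off of Theorem~\ref{Omega-thm}. That theorem identifies $\Imin_n$ with the set of weakly increasing antisymmetric integer sequences of length $n$ (via $w\mapsto \lambda(w)$), and identifies $\Imin'_n$ with the set of weakly increasing integer sequences of length $n$ whose terms sum to zero. Hence to prove $\Imin_n\subset \Imin'_n$, it suffices to check that a weakly increasing antisymmetric sequence satisfies the weaker condition of summing to zero. This is just the observation that if $a_i+a_{n+1-i}=0$ for all $i\in[n]$, then pairing terms gives
\[ 2\sum_{i=1}^n a_i \;=\; \sum_{i=1}^n (a_i + a_{n+1-i}) \;=\; 0. \]

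For strictness when $n>2$, the plan is to produce an explicit element of $\Imin'_n\setminus \Imin_n$. Consider the sequence
\[ a \;=\; \bigl(-(n-1),\,1,\,1,\,\ldots,\,1\bigr) \]
with one entry equal to $-(n-1)$ followed by $n-1$ entries equal to $1$. This is weakly increasing for $n\geq 2$, and its terms sum to zero, so by part~(i) of Theorem~\ref{Omega-thm} the corresponding permutation $w_a$ lies in $\Imin'_n$. Antisymmetry would force $a_1=-a_n$, i.e., $-(n-1)=-1$, which holds only when $n=2$; so for $n>2$ the sequence $a$ is not antisymmetric, and $w_a\in \Imin'_n\setminus \Imin_n$.

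I do not anticipate any real obstacle: the entire content of the corollary is packaged into Theorem~\ref{Omega-thm}, and both the inclusion and its strictness reduce to an immediate combinatorial comparison of the two classes of sequences. (As a sanity check for the boundary case $n=2$, a weakly increasing sequence $(a_1,a_2)$ with $a_1+a_2=0$ automatically satisfies $a_1=-a_2$, i.e., is antisymmetric, so $\Imin_2=\Imin'_2$ and strictness genuinely fails only at $n=2$.)
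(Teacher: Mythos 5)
Your proof is correct and follows exactly the route the paper intends: the corollary is read off from Theorem \ref{Omega-thm} by noting that a weakly increasing antisymmetric sequence automatically sums to zero. Your explicit witness for strictness when $n>2$ (and the check that $\Imin_2=\Imin'_2$) is a nice bonus, since the paper asserts strictness in the surrounding prose without exhibiting an example.
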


Finally, we may give the promised length formula for $w \in \Imin'_n$ in terms of $\lambda(w)$.

  \begin{lemma}\label{ellOmega-cor} 
  If $w \in \Imin'_n$ and $\lambda(w)=(a_1,a_2,\dots,a_n)$, then 
  \[\ell(w) = \sum_{1\leq i<j \leq n} \max(a_j-a_i-1,0)
 =  \sum_{1\leq i <j \leq n} \Bigl(a_j-a_i+  \One(a_i = a_j)\Bigr) - \tbinom{n}{2}\]
where $\One(x = y)$ denotes the function which is 1 if $x=y$ and 0 otherwise.
 \end{lemma}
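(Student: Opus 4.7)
The plan is to apply Corollary~\ref{ell-cor} and reduce the computation of $\ell(w)$ to counting integer lattice points. Write each $i \in \ZZ$ uniquely as $i = kn + p$ with $p \in [n]$ and $k \in \ZZ$; then $w(i) = w(p) + kn = na_p + r_p + kn$, where I set $r_p := w(p) - na_p \in [n]$. For each ordered pair $(p,j) \in [n] \times [n]$, the number of $i$ lying in the residue class of $p$ modulo $n$ which satisfy $i < j$ and $w(i) > w(j)$ equals the number of integers $k$ in the open interval
\[
\left(\, a_j - a_p + \tfrac{r_j - r_p}{n},\ \tfrac{j-p}{n}\,\right).
\]
Since $(j-p)/n$ and $(r_j - r_p)/n$ both lie in $(-1,1)$ and are integers only when $j=p$ or $r_j = r_p$ respectively, a short case analysis on whether $p$ is less than, equal to, or greater than $j$ and on the sign of $r_j - r_p$ shows that this count vanishes unless $p > j$, in which case it equals
\[
\max(0,\ a_p - a_j - 1)\ \text{if } r_j \ge r_p, \qquad\text{and}\qquad a_p - a_j\ \text{if } r_j < r_p.
\]
Throughout I use that $a = \lambda(w)$ is weakly increasing by Theorem~\ref{Omega-thm}, so $a_p \ge a_j$ whenever $p \ge j$.

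The decisive observation is that for $w \in \Imin'_n$ the relative order of the $r$-values is forced by the $a$-values: whenever $j < p$ and $a_j < a_p$, one must have $r_j > r_p$. To prove this, recall from Lemma~\ref{a-lem} that $w = w_a$ maps each block $I_\alpha \subset [n]$, on which $a$ takes the constant value $v_\alpha$, order-preservingly onto $nv_\alpha + J_\alpha$, so $r_t \in J_\gamma$ whenever $t \in I_\gamma$. For blocks with $\alpha < \beta$, the description $J_\gamma = \{n+1-b_\gamma,\dots,n-b_{\gamma-1}\}$ gives $\min J_\alpha = n+1-b_\alpha > n - b_{\beta-1} = \max J_\beta$, using $b_\alpha \le b_{\beta-1}$. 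Similarly, within a single block the order-preserving property forces $r_j < r_p$ for $j < p$, but then $a_j = a_p$ and the contribution $\max(0,-1)=0$ is already the correct value.

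Substituting these values into the sum over ordered pairs yields
\[
\ell(w) = \sum_{1 \le j < p \le n} \max(a_p - a_j - 1,\ 0),
\]
which is the first claimed formula. The second equality is purely algebraic: since $a_j \le a_p$ for $j < p$, checking the two cases $a_j = a_p$ and $a_j < a_p$ separately shows $\max(a_p - a_j - 1, 0) = (a_p - a_j) + \One(a_j = a_p) - 1$, and summing over the $\binom{n}{2}$ pairs produces the stated identity. The main obstacle is the initial case analysis, a small but delicate bookkeeping exercise with floors and strict inequalities; once it is organized cleanly, the structural comparison $\min J_\alpha > \max J_\beta$ falls directly out of Lemma~\ref{a-lem} and everything else is routine.
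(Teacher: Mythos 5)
Your proof is correct and follows essentially the same route as the paper's: both apply Corollary~\ref{ell-cor}, group the inversions $(i,j)$ with $j\in[n]$ by the residue class of $i$ modulo $n$, reduce each group to counting integers $k$ in an interval of length governed by $a_j-a_i$, and settle the boundary case via the block structure of $w_a$ from Lemma~\ref{a-lem} (your observation that $r_j>r_p$ across blocks is exactly the paper's inequality $w_a(i)+n(a_j-a_i-1)<w_a(j)<w_a(i)+n(a_j-a_i)$). The bookkeeping differs only cosmetically, and your case analysis and the final algebraic identity are both sound.
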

 
 \begin{proof}
 We only show that $\ell(w) = \sum_{i<j} \max(a_j-a_i-1,0)$ since the second formula follows easily from this.
 Let $a=\lambda(w) = (a_1,a_2,\dots,a_n)$. By Theorem \ref{Omega-thm}, the sequence $a$ is then weakly increasing and $\sum_{i=1}^n a_i = 0$, and $w=w_a$.
 For each $i,j \in [n]$ let $E_{i,j} = \{ (j-nk,i) : k \in \NN \} \cap \inv(w_a)$. 
From Corollary \ref{ell-cor} we have $\ell(w)=\ell(w_a) = \sum_{(i,j) \in [n]\times[n]} |E_{i,j}|$,
 so 
  it suffices to show that 
 \be\label{E-eq} E_{i,j} = \{ (j-nk,i) : 0 < k < a_j-a_i\}.\ee
 This certainly holds if 
 $j\leq i$, since then  $ w_a(j) \leq w_a(i)$ so $w_a(j-nk)=w(j)-nk \leq w_a(i)$ for all $k \in \NN$, whence $E_{i,j} = \varnothing$.
Alternatively, assume $i<j$ so that $a_i \leq a_j$.
 The definition of $w_a$ in Lemma \ref{a-lem} then implies the following:
\begin{itemize}
\item 
 If $a_i = a_j$ then $w_a(j) - w_a(i) < n$ so $w_a(j-nk) = w_a(j)-nk <  w_a(i)$ for all $k\geq 1$.
 
 \item
 If   $a_i<a_j$ then
 $w_a(j-nk) > w_a(i)$ if and only if $k<a_j-a_i$
 since
  \[   w_a(i) + n(a_j-a_i-1) < w_a(j) < w_a(i) + n(a_j-a_i).\]
 \end{itemize}
 Since  $j-nk < i$ if and only if $k\geq 1$ (as we assume $1\leq i<j \leq n$),
 these observations together imply that \eqref{E-eq} holds, which is what we needed to show.
 \end{proof}

 \subsection{Some calculations}
 
Our results in the previous section all apply to the set of twisted involutions $\Imin'_n$  given by \eqref{omega-def}. Here we derive some more specific formulas which seem not to have simple analogues outside the proper subset of involutions $\Imin_n \subset \Imin'_n$. 
Throughout,  let $n \in \ZZ$  with $n\geq 2$.
  
 Fix a weakly increasing, antisymmetric integer sequence $a=(a_1,a_2,\dots,a_n)$ and  let $\beta(a) = (b_1,b_2,\dots,b_m)$. We attach three additional integer sequences to this data.
 First,  
we define $\beta^-(a)$ as  the initial subsequence of $\beta(a)$ given by truncating $\beta(a)$ at the point where its terms become nonnegative. In other words, if $m=1$ (which occurs if and only if $a= (0,0,\dots,0)$) then $\beta^-(a)$ is the empty sequence,
 while  if $m>1$ then   
\[ \beta^-(a) = (b_1,b_2,\dots,b_{m_0})\qquad\text{where $m_0 \in [m-1]$ is such that $b_{m_0} < 0 \leq b_{m_0+1}$}.
\] 
Note that if $a$ has at least one negative term, then the index $m_0$ in this definition exists and is unique since $a$ is weakly increasing and antisymmetric. 
Both the length of $\beta^-(a)$ and its final term are less than or equal to $ \lfloor \frac{n}{2}\rfloor$  for the same reason.
Next, set \[\mu^-(a) = (b_1,b_2-b_1,b_3-b_2,\dots,b_{m_0}-b_{m_0-1}).\]
Observe that the entries of this sequence are the successive multiplicities of the negative integers appearing in  $a$.
  Finally, define
\[ \delta^-(a) = (a_{b_2}-a_{b_1}, a_{b_3} - a_{b_2},\dots, a_{b_{m_0}}-a_{b_{m_0-1}},-a_{b_{m_0}}).\]
If $\Delta$  is the difference operator on sequences
given by $\Delta: (\sigma_i)_{i \in [k]} \mapsto (\sigma_{i+1} - \sigma_i)_{i \in [k-1]}$,
then $\delta^-(a)$ is what we get by applying $\Delta$ to $(a_1,a_2,\dots,a_{m_0},0)$ and then omitting all terms which are zero.
Before proceeding, let us illustrate these definitions with an example.

\begin{example}
If $a = (-6,-5,-5,-5,-2,0,2,5,5,5,6)$ then 
$ \beta(a) = (1,4,5,6,7,10,11)$
and
\[
 \beta^-(a) = (1,4,5)\qquand
 \mu^-(a) =(1,3,1)\qquand \delta^-(w) = (1,3,2).\]
If $a = (-6,-5,-5,-5,-2,2,5,5,5,6)$ then 
$ \beta(a) = (1,4,5,6,9,10)$
while $\beta^-(a)$, $\mu^-(a)$, and $\delta^-(a)$ are the same as before.
\end{example}

In view of Theorem \ref{Omega-thm}, we may transfer these statistics to  
involutions $w \in \Imin_n$ by setting
\[ \beta^-(w) = \beta^-(\lambda(w))
\qquand
\mu^-(w) = \mu^-(\lambda(w))
\qquand
\delta^-(w) = \delta^-(\lambda(w)).\]
As $w$ varies over all elements of $\Imin_w$, the sequence $\mu^-(w)$  can be any finite sequence of positive integers whose sum is at most $\lfloor \frac{n}{2}\rfloor$, while
 $\delta^-(w)$ can be any $m$-tuple of positive integers. In particular, we note the following:

\begin{lemma} \label{delta-lem}
Fix  $m \in \NN$ and  a sequence of positive integers  $c=(c_1,\dots,c_m)$ with $\sum_{i=1}^m c_i \leq \lfloor \frac{n}{2}\rfloor$.
The map 
$w \mapsto \delta^-(w)$
is then a bijection 
$ \{ w \in \Imin_n : \mu^-(w) = c \} \to \{ \text{ $m$-tuples of positive integers }\}.$
\end{lemma}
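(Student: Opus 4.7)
The plan is to exhibit an explicit inverse. By Theorem \ref{Omega-thm}, elements $w \in \Imin_n$ correspond bijectively to weakly increasing, antisymmetric integer sequences $a = (a_1,\dots,a_n)$ via $w \mapsto \lambda(w)$, so it suffices to prove the analogous statement on the level of such sequences. First I would unpack what $\mu^-(w) = c$ means: writing $\beta(a) = (b_1,\dots,b_{m'})$, this condition asserts that $\beta^-(a)$ has length exactly $m$ and its successive differences (with $b_0 = 0$) equal $c_1,c_2,\dots,c_m$. Equivalently, the negative part of $a$ consists of $c_1$ copies of some value $v_1$, then $c_2$ copies of $v_2$, and so on, ending with $c_m$ copies of $v_m$, where $v_1 < v_2 < \cdots < v_m < 0$ are uniquely determined strictly increasing negative integers. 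By antisymmetry, the positive part of $a$ is then forced to consist of $c_m$ copies of $-v_m$, followed by $\dots$, followed by $c_1$ copies of $-v_1$, and the middle contains exactly $n - 2\sum_{i=1}^m c_i$ zeros, which is nonnegative precisely because $\sum c_i \leq \lfloor n/2 \rfloor$ (and, when $n$ is odd, at least one zero is present, as required for antisymmetry).

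The upshot is that, once $\mu^-(w) = c$ is fixed, the sequence $a = \lambda(w)$ is determined by the tuple $(v_1,\dots,v_m)$ of negative values. Under this identification, $\delta^-(w) = (v_2 - v_1,\ v_3 - v_2,\ \dots,\ v_m - v_{m-1},\ -v_m)$, which is manifestly an $m$-tuple of positive integers since the $v_i$ are strictly increasing and negative.

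To finish, I would check that the map $(v_1,\dots,v_m) \mapsto (v_2 - v_1,\dots,v_m - v_{m-1},-v_m)$ is itself a bijection from strictly increasing $m$-tuples of negative integers to $m$-tuples of positive integers: given $(d_1,\dots,d_m)$ with each $d_i > 0$, the unique preimage is $v_i = -(d_i + d_{i+1} + \cdots + d_m)$, and the inequality $v_1 < v_2 < \cdots < v_m < 0$ is automatic. Composing this with the correspondence from the previous paragraph produces a two-sided inverse to $w \mapsto \delta^-(w)$ on $\{w \in \Imin_n : \mu^-(w) = c\}$, completing the proof.

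No step here looks genuinely hard; the only mild subtlety is bookkeeping for the middle block of zeros and checking that the antisymmetry condition together with $\sum c_i \leq \lfloor n/2 \rfloor$ makes the reconstruction of $a$ from $(v_1,\dots,v_m)$ well-defined and unique. Once this is in hand, the argument is a direct verification.
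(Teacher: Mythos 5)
Your proposal is correct and follows essentially the same route as the paper: injectivity because $\mu^-(w)$ and $\delta^-(w)$ determine $\lambda(w)$ (hence $w$ by Theorem \ref{Omega-thm}), and surjectivity via the explicit inverse built from the partial sums $-(d_i + d_{i+1} + \cdots + d_m)$, which is exactly the paper's construction of the preimage sequence. The only difference is presentational (you phrase it as a composition of bijections through the tuple $(v_1,\dots,v_m)$), so nothing further is needed.
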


\begin{proof}
It is clear that $\mu^-(w)$ and $\delta^-(w)$ uniquely determine $\lambda(w)$, and hence $w$ by Theorem \ref{Omega-thm}, so the given map is injective. To show surjectivity, it is enough by Theorem \ref{Omega-thm} to construct a weakly increasing, antisymmetric sequence $a \in \ZZ^n$ with $\mu^-(a) = c$ and $\mu^-(a) =d$ when given   an arbitrary $m$-tuple of positive integers 
 $d=(d_1,\dots,d_m)$.
For this let
$e_{k} =  d_k+d_{k+1}+\dots+d_m$ for  $k \in [m]$ and  set  $z=n-2\sum_{i=1}^m c_i$;
one then checks that the following $n$-tuple suffices:
\[ (\underbrace{-e_1,\dots, -e_1}_{c_1\text{ entries}},\underbrace{ -e_2,\dots,-e_2}_{c_2\text{ entries}}, \dots, \underbrace{-e_m,\dots,-e_m}_{c_m\text{ entries}}, \underbrace{0,\dots,0}_{z\text{ entries}}
,
\underbrace{e_m,\dots,e_m}_{c_m\text{ entries}},
\dots,
\underbrace{e_2,\dots,e_2}_{c_2\text{ entries}},
\underbrace{e_1,\dots,e_1}_{c_1\text{ entries}}).
\]
\end{proof}

As our first application of this new notation, we prove this sequel to Lemma \ref{ellOmega-cor}.

\begin{proposition}\label{ellOmega-prop} Let $w \in \Imin_n$, write $\beta^-(w) = (b_1,b_2,\dots,b_m)$ and $\mu^-(w) = (c_1,c_2,\dots,c_m)$ and $\delta^-(w) = (d_1,d_2,\dots,d_m)$, and  set $z = n-2\sum_{i=1}^m c_i$.
It then holds that 
\[
 \ellprime (w) = \sum_{i=1}^m c_i
 \qquand
 \ell(w) = \tbinom{z}{2}- \tbinom{n}{2}+ 2\sum_{i=1}^m \Bigl( b_i(n-b_i)   d_i  + \tbinom{c_i}{2} \Bigr) .\]
\end{proposition}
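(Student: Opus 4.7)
The plan is to handle the two formulas separately. For $\ellprime(w)$, I would apply Proposition \ref{ell*-prop} and count the fixed points of $w$ on $[n]$. By Lemma \ref{a-lem}, the permutation $w = w_a$ with $a = \lambda(w)$ restricts to an order-preserving bijection $I_i \to nv_i + J_i$ on each block. A fixed point of $w$ lying in $[n]$ forces $nv_i = 0$ and $J_i = I_i$, and for an antisymmetric weakly increasing $a$ this occurs precisely on the central block of zeros, which has size $z$. Hence $\ellprime(w) = \tfrac{1}{2}(n-z) = \sum_{i=1}^m c_i$.

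For $\ell(w)$, I would start from the second formula in Lemma \ref{ellOmega-cor}, namely
\[
\ell(w) = \sum_{1 \le i < j \le n}(a_j - a_i) + \sum_{1 \le i < j \le n}\One(a_i = a_j) - \tbinom{n}{2}.
\]
The indicator sum counts unordered pairs of positions sharing a block value, so grouping positive/negative mirror pairs with the central block gives $\binom{z}{2} + 2\sum_i \binom{c_i}{2}$, which already matches the $\binom{z}{2} - \binom{n}{2} + 2\sum_i \binom{c_i}{2}$ part of the target. Using antisymmetry of $a$ (so $\sum_k a_k = 0$), the difference sum collapses to $\sum_{i<j}(a_j - a_i) = 2\sum_{k=1}^n k\,a_k$. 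I would then split this sum according to the block structure: the central block contributes nothing, and pairing the negative block ending at position $b_i$ (value $-e_i$, positions $b_{i-1}+1$ through $b_i$) with its antisymmetric positive partner (value $e_i$, positions $n-b_i+1$ through $n-b_{i-1}$) produces the contribution $e_i c_i(n - b_i - b_{i-1})$. Thus
\[
\sum_{i<j}(a_j-a_i) = 2 \sum_{i=1}^m e_i c_i (n - b_i - b_{i-1}).
\]

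What remains is to convert the $(e_i,c_i)$-parameterization into the $(d_i,b_i)$-parameterization demanded by the statement. This is the main computational step, and I would carry it out by noting the telescoping identity
\[
(b_i - b_{i-1})(n - b_i - b_{i-1}) = b_i(n-b_i) - b_{i-1}(n - b_{i-1}),
\]
so that, writing $\phi(k) = k(n-k)$, we have $\sum_i e_i c_i(n - b_i - b_{i-1}) = \sum_i e_i(\phi(b_i) - \phi(b_{i-1}))$. An Abel summation (using $b_0 = 0$, $e_i - e_{i+1} = d_i$ for $i < m$, and $e_m = d_m$) rewrites this as $\sum_{i=1}^m d_i\, b_i(n-b_i)$, yielding precisely the required term $2\sum_i b_i(n-b_i)d_i$. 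Combining these two computations gives the desired formula for $\ell(w)$. I expect the only delicate point to be getting the boundary terms of the Abel summation correct, since the role of $d_m = e_m$ differs slightly from that of $d_i = e_i - e_{i+1}$ for $i < m$.
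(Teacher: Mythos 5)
Your proposal is correct and follows the same overall strategy as the paper's proof: the formula for $\ellprime(w)$ comes from Proposition \ref{ell*-prop} by observing that the fixed points of $w$ in $[n]$ are exactly the central zero block of $\lambda(w)$, of size $z$; and the formula for $\ell(w)$ comes from the second expression in Lemma \ref{ellOmega-cor}, with the indicator sum evaluated identically as $\binom{z}{2} + 2\sum_i \binom{c_i}{2}$. The only place you genuinely diverge is the evaluation of $\sum_{i<j}(a_j-a_i)$: the paper converts it at once to $\sum_{k=1}^{n-1} k(n-k)(a_{k+1}-a_k)$ and reads off the nonzero consecutive differences (which forces a small case split when $b_m = n/2$, where the difference is $2d_m$ at a single position rather than $d_m$ at two mirror positions), whereas you first evaluate block-by-block to get $2\sum_i e_i c_i(n-b_i-b_{i-1})$ and then pass to the $(d_i,b_i)$ parameterization via the telescoping identity $(b_i-b_{i-1})(n-b_i-b_{i-1}) = b_i(n-b_i) - b_{i-1}(n-b_{i-1})$ and Abel summation. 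Both routes are valid; yours trades the paper's case analysis at the middle position for an extra summation by parts at the end, and the boundary terms you flag as delicate do work out ($b_0=0$ gives $\phi(b_0)=0$, and $d_m = e_m$ supplies exactly the last term $d_m\,b_m(n-b_m)$).
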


\begin{remark} Observe that $ \sum_{i=1}^m c_i = b_m$ if $m>0$. When $m=0$ (which occurs only when $w=1$) we interpret the sum to be zero and set $z=n$;
our formulas then reduce to $\ellprime (w) = \ell(w) = 0$.
\end{remark}

\begin{proof}
As just noted in the remark, both formulas hold trivially when $w=1$. Assume $w \in \Imin_n\setminus \{1\}$ so that $m\geq 1$.
Since $w=w_a$ for $a = \lambda(w)$ by Theorem \ref{Omega-thm}, it follows from the definition of $w_a$ in Lemma \ref{a-lem} that if $i \in [n]$ then $w(i) = i$ 
if and only 
if
$b_m < i \leq n-b_m$. (Note that the index $m$ here is not the same as the one in Lemma \ref{a-lem}.)
 Hence $\#\{ i \in [n] : w(i) = i\} = n-2b_m$ so by Proposition \ref{ell*-prop}
we have $\ellprime (w) = b_m=\sum_{i=1}^m c_i $.

To derive the given formula for $\ell(w)$, 
let $\lambda(w) = (a_1,a_2,\dots,a_n)$
and
recall that
   $\mu^-(w) = (c_1,c_2,\dots,c_m)$ is by definition the list of the nonzero multiplicities of the negative numbers in   $\lambda(w)$. Since the latter $n$-tuple is antisymmetric, it follows that  $\mu^-(w)$ is also the list of  multiplicities of the positive numbers in $\lambda(w)$,
   and that
$z$ is the multiplicity of 0 in this $n$-tuple. Hence
\be\label{preform1}
\sum_{1\leq i < j \leq n} \One(a_i=a_j)
= \tbinom{z}{2} + 2\sum_{i=1}^m \tbinom{c_i}{2}.
\ee
Next, observe that
\[
\sum_{1\leq i <j \leq n} (a_j-a_i)
=
\sum_{1\leq i <j \leq n} \sum_{k=i}^{j-1} (a_{k+1}-a_k)
=
\sum_{k=1}^{n-1} k(n-k) (a_{k+1}-a_k).
\]
Since $ \lambda(w)$ is antisymmetric, it follows from our definition of $\beta^-(w)$ and $\delta^-(w)$ that 
\[ a_{k+1} - a_k =  \begin{cases} d_i&\text{if $k \in \{ b_i,n-b_i\}$ for some $i\in[m]$ with $b_i< \tfrac{n}{2}$}
\\
2d_m&\text{if $k = b_m = \tfrac{n}{2}$}
\\
0&\text{if $k \notin \{b_i,n-b_i\}$ for all $i \in [m]$}.\end{cases}
\]
In particular, to derive the middle case, observe that if $k=b_m = \frac{n}{2}$, then $n$ is even and $a_{k+1} - a_{k} = -2a_{n/2} = 2d_m$. (By contrast, if $k=b_m < \frac{n}{2}$, then $a_{k+1} = 0$ so $a_{k+1}-a_{k} = -a_{b_m} = d_m$.)
Combining the two preceding equations shows that 
\be\label{preform2}\sum_{1 \leq i <j \leq n} (a_j-a_i) = 2\sum_{i=1}^m b_i(n-b_i)d_i.\ee
We now just substitute \eqref{preform1} and \eqref{preform2} into the right side of the formula in Lemma \ref{ellOmega-cor}.
\end{proof}

Consulting  \eqref{TJ-def}, we see that the only term in the definition of $T_{\tilde S_n}(s,q)$ which we cannot yet evaluate is the fixed point series $F_{K,\diamond}(q)$; we deal with this in the following proposition.

\begin{proposition} \label{FOmega-prop}
Fix $w \in \Imin_n$ 
and 
define $\diamond \in \Aut(\tilde S_n)$ and $K \subset \{s_1,s_2,\dots,s_{n-1}\}$ by 
\[ \diamond : x \mapsto  x^\diamond = w x  w
\qquand K = \{ s_1,s_2,\dots, s_{n-1}\} \cap \{ s_1^\diamond, s_2^\diamond, \dots, s_{n-1}^\diamond\}.\]
If $ \mu^-(w) = (c_1,c_2,\dots,c_m)$ and $ z  =n-2(c_1+c_2+\dots+c_m)$,
then 
\[
 F_{K,\diamond}(q) =
\Bigl([c_1]_{q^2}! \cdot [c_2]_{q^2}! \cdots [c_{m}]_{q^2}!\Bigr) \cdot [z]_q!
\]
\end{proposition}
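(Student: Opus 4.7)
My plan is to first pin down exactly which simple reflections lie in $K$, then to use the antisymmetry of $\lambda(w)$ to describe $W_K$ together with the restriction of $\diamond$ as a direct product carrying a manifest involution, and finally to compute the fixed-point length generating series factor by factor.

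The key first step is to show that
\[ K = \bigcup_{k=1}^M \{s_i : b_{k-1} < i < b_k\},\]
where $[n] = I_1 \sqcup \cdots \sqcup I_M$ is the block decomposition induced by $\beta(\lambda(w))$ via Lemma~\ref{a-lem}. Since $w^2 = 1$, the element $s_i^\diamond = ws_iw$ is the reflection of $\tilde S_n$ that swaps $w(i)+kn$ with $w(i+1)+kn$ for all $k \in \ZZ$, and this coincides with a member of $\{s_1,\ldots,s_{n-1}\}$ exactly when $w(i+1)-w(i) = 1$ and $w(i) \not\equiv 0 \pmod n$. By Lemma~\ref{a-lem}, $w$ sends $I_k$ order-preservingly onto the consecutive-integer set $nv_k + J_k$; hence $w(i+1) = w(i)+1$ whenever $i,i+1$ share a block $I_k$, and in that case one checks directly that $w(i) \not\equiv 0 \pmod n$ as well (the only obstruction would be $i = b_1$ with $v_1 = 0$, which contradicts $i+1 \in I_1$). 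Conversely, for a boundary pair $i = b_k,\ i+1 = b_k + 1$, a short computation gives $w(i+1) - w(i) = n(v_{k+1}-v_k) - (b_{k+1}-b_{k-1}) + 1$, which can never equal $1$: if $M \geq 3$ then $b_{k+1}-b_{k-1} < n \leq n(v_{k+1}-v_k)$, while if $M = 2$ then antisymmetry of $\lambda(w)$ forces $v_{k+1}-v_k = 2v_2 \geq 2$.

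Once $K$ is identified, one has $W_K = \prod_{k=1}^M W_{K_k}$ with $W_{K_k} := \langle s_i : b_{k-1}<i<b_k\rangle \cong S_{|I_k|}$, the factors commuting pairwise in $\tilde S_n$. The explicit formula $s_{b_{k-1}+j}^\diamond = s_{n-b_k+j}$ (read off from Lemma~\ref{a-lem} together with the fact that $w$ sends $I_k$ order-preservingly onto $nv_k + J_k$) then shows that $\diamond$ carries $W_{K_k}$ bijectively onto $W_{K_{M+1-k}}$ via a Coxeter-graph isomorphism $\sigma_k$ that matches the two chains of simple reflections in the same order; in particular $\sigma_k$ preserves Coxeter length. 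The fixed-point series $F_{K,\diamond}(q)$ therefore factors according to the $\diamond$-orbits $\{k, M+1-k\}$ on the set of factors of $W_K$. For a genuine pair $k \neq M+1-k$, the fixed subgroup acting on $W_{K_k} \times W_{K_{M+1-k}}$ is the graph $\{(a, \sigma_k(a)) : a \in W_{K_k}\}$, in which every element has length $\ell(a) + \ell(\sigma_k(a)) = 2\ell(a)$, giving contribution $\sum_{a \in S_{|I_k|}} q^{2\ell(a)} = [|I_k|]_{q^2}!$. For the self-paired central factor (present exactly when $M$ is odd, equivalently $z>0$), $v_{(M+1)/2}=0$ forces $w$ to fix $I_{(M+1)/2}$ pointwise, so $\diamond$ is the identity on $W_{K_{(M+1)/2}} \cong S_z$, contributing $[z]_q!$. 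Since $|I_k| = c_k$ for $k \leq m$ and $|I_{m+1}| = z$ in the antisymmetric block structure, multiplying these contributions yields the stated formula (with $[0]_q!=1$ covering the case $z=0$).

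The main obstacle I foresee lies in the first step, namely ruling out that a boundary pair $(b_k, b_k+1)$ could accidentally satisfy $w(b_k+1)-w(b_k)=1$; this requires the antisymmetry-driven inequalities comparing $n(v_{k+1}-v_k)$ to $b_{k+1}-b_{k-1}$ sketched above, separately treating the degenerate case $M=2$ (where $v_{k+1}-v_k \geq 2$) from the generic case $M\geq 3$ (where $b_{k+1}-b_{k-1} < n$).
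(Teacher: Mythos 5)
Your proposal is correct and follows essentially the same route as the paper's proof: identify $K$ as the union of the interiors of the blocks determined by $\beta(\lambda(w))$, observe that $\diamond$ permutes the resulting symmetric-group factors of $W_K$ by pairing each block with its antisymmetric partner while fixing the central factor, and sum $q^{\ell}$ over the fixed diagonal elements to get $\prod_i [c_i]_{q^2}!\cdot [z]_q!$. The only difference is one of detail: where the paper asserts that the boundary generators $s_{b_k}$ are excluded from $K$ as an easy consequence of the definition of $w_a$, you supply the explicit computation $w(b_k+1)-w(b_k)=n(v_{k+1}-v_k)-(b_{k+1}-b_{k-1})+1$ and the antisymmetry inequalities ruling out the value $1$, which is a valid (and slightly more careful) verification of the same fact.
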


\begin{remark}
If $\mu^-(w)=()$  then we interpret the given formula to mean $F_{K,\diamond}(q) = [n]_q!$.
\end{remark}

\begin{proof}
Write $\lambda(w) = a=(a_1,a_2,\dots,a_n)$ 
and $\beta^-(w) = (b_1,b_2,\dots,b_{m})$ and note that  
\[ \beta(w)=\beta(\lambda(w)) = (b_1,\ b_2,\ \dots,\ b_{m-1},\ b_m,\ n-b_m,\ n-b_{m-1},\ \dots,\ n-b_2,\ n-b_1,\ n)\]
since $\lambda(w)$ is antisymmetric.  
Set $b_0 = 0$ and for each $i \in [m]$ define 
$
A_i = \{ s_{k}: b_{i-1} < k < b_i\}
$ and 
$
B_i = \{ s_{k} : n-b_i < k < n-b_{i-1}\},
$
and also let
$F = \{ s_k : b_m < k < n-b_m\}$. Note that these $2m+1$ subsets of simple generators are pairwise disjoint, and that some of the sets may be empty; in particular, we have $|A_i| = |B_{m+1-i}| = c_i-1$ and $|F| = z-1$.

By Theorem \ref{Omega-thm}
we have $w=w^{-1}=w_a$, and the following properties derive easily from the definition of $w_a$ in Lemma \ref{a-lem}:
\begin{itemize}
\item[(i)] For each $i \in [m]$, $\diamond$ restricts to inverse bijections $A_i \to B_{m+1-i}$ and $B_i \to A_{m+1-i}$
 which are  order-preserving (with the set of simple generators  ordered by index in the obvious way).
 
 \item[(ii)] The automorphism $\diamond$ fixes every element of $F$ since $w(i)=i$ for all $b_m<i\leq n-b_m$.
 
 \item[(iii)] Let $i \in [n-1]$. If $i$ is a term in the sequence $\beta(w)$, or equivalently if $s_i$ does not belong to any of the  sets $A_1,A_2,\dots,A_m$ or $B_1,B_2,\dots,B_m$ or $F$, then $s_i^\diamond \notin \{ s_1,s_2,\dots,s_{n-1}\}$.
 
 \end{itemize}
 We conclude from these facts that $K = \bigcup_{i \in [m]} A_i \cup B_i \cup F$. Moreover, writing $W = \tilde S_n$, it follows that we may identify the standard parabolic subgroup $W_K\subset \tilde S_n$ with the cartesian product
 \[W_K = S_{c_1} \times S_{c_2} \times \dots \times S_{c_m} \times S_z \times S_{c_m} \times \dots \times  S_{c_2} \times S_{c_1}\]
 and that $\diamond$ acts on $W_K$ with respect to this identification by the formula
 \[ (w_1,w_2,\dots,w_m,x,w_m',\dots,w_2',w_1')^\diamond  = (w_1',w_2',\dots,w_m',x,w_m,\dots,w_2,w_1)\]
 for $w_i ,w_i'\in S_{c_i}$ and $x \in S_z$.
In this sense, the elements in $W_K$ fixed by $\diamond$ are precisely the tuples of the form
$(w_1,w_2,\dots,w_m,x,w_m,\dots,w_2,w_1)$ where $w_i \in S_{c_i}$ and $x \in S_z$.
 As the length of such a generic fixed element is $\ell(x) + 2\sum_{i=1}^m \ell(w_i)$, we deduce that 
$ F_{K,\diamond}(q) = \prod_{i=1}^m P_{S_{c_i}}(q^2) \cdot P_{S_z}(q) $ which coincides with the desired formula as $P_{S_k}(q) = [k]_q!$ by Theorem \ref{Pfactor-thm}.
\end{proof}

Combining the preceding results yields the following sum-to-product identity, which will be the first main step in our proof of Theorem \ref{main-thm} from the introduction.

 \begin{lemma}\label{mufixed-lem}
 Fix  $m \in \NN$ and let $c=(c_1,c_2,\dots,c_m)$ be a sequence of positive integers with $\sum_{i=1}^m c_i \leq \lfloor \frac{n}{2}\rfloor$.
 Define $b_k = \sum_{i=1}^k c_i$  and $ z = n - 2(c_1+c_2+\dots+c_m)$.
 It then holds that
 \[ 
\sum_{\substack{(w,\diamond,K) 
\\ \mu^-(w) =c}}
  q^{\ell(w)} \cdot \tfrac{ P_{S_n}(q^2)}{F_{K,\diamond}(q)}
 =
q^{\binom{z}{2}-\binom{n}{2}} \cdot \tfrac{[n]_{q^2}!}{[z]_q!}\cdot
 \prod_{i=1}^m
 \(  \frac{q^{2b_i(n-b_i)}}{1-q^{2b_i(n-b_i)}
}
\cdot
\frac{q^{2\binom{c_i}{2}}}{[c_i]_{q^2}!}
\)
 \]
where the sum is over the triples $(w,\diamond,K)$ with $w \in \Imin_n$ such that $\mu^-(w) = c$, and with $\diamond$ and $K$  defined relative to $w$ as in Proposition \ref{FOmega-prop}.
 \end{lemma}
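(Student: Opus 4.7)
The proof will be a direct substitution and summation, essentially assembling the pieces from Lemma \ref{delta-lem}, Proposition \ref{ellOmega-prop}, and Proposition \ref{FOmega-prop}.

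First, I would re-index the sum. By Lemma \ref{delta-lem}, the map $w\mapsto \delta^-(w)$ is a bijection between the set $\{w\in\Imin_n : \mu^-(w)=c\}$ and the set of $m$-tuples of positive integers $(d_1,\dots,d_m)\in\ZZ_{>0}^m$. For each such $w$, Proposition \ref{FOmega-prop} gives
\[
F_{K,\diamond}(q) = [z]_q! \cdot \prod_{i=1}^m [c_i]_{q^2}!,
\]
which depends only on $c$ (not on the specific $d_i$'s), while Proposition \ref{ellOmega-prop} gives
\[
\ell(w) = \tbinom{z}{2} - \tbinom{n}{2} + 2\sum_{i=1}^m\Bigl(b_i(n-b_i)d_i + \tbinom{c_i}{2}\Bigr),
\]
with the only dependence on $d=(d_1,\dots,d_m)$ isolated in the linear term $2\sum_i b_i(n-b_i)d_i$.

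Next, I would invoke Theorem \ref{Pfactor-thm} to substitute $P_{S_n}(q^2) = [n]_{q^2}!$, and pull every factor independent of $d$ out of the sum. What remains inside is
\[
\sum_{d_1,\dots,d_m\ge 1} q^{2\sum_{i=1}^m b_i(n-b_i)d_i} \;=\; \prod_{i=1}^m \sum_{d_i\ge 1} q^{2b_i(n-b_i)d_i} \;=\; \prod_{i=1}^m \frac{q^{2b_i(n-b_i)}}{1-q^{2b_i(n-b_i)}},
\]
since $b_i(n-b_i)>0$ for every $i\in[m]$ (because $0 < b_i \le b_m \le \lfloor n/2\rfloor < n$), so each geometric series converges as a formal power series in $q$.

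Combining everything and regrouping the prefactors $q^{\binom{z}{2}-\binom{n}{2}}$, $\frac{[n]_{q^2}!}{[z]_q!}$, and the products $\prod_i q^{2\binom{c_i}{2}}/[c_i]_{q^2}!$ yields exactly the right-hand side. There is no serious obstacle; the entire argument is a mechanical calculation once one has the length formula in the form of Proposition \ref{ellOmega-prop}, so the only mild care required is in bookkeeping the factors of $q$ — in particular recognizing that the $d$-independent part of $\ell(w)$ contributes $q^{\binom{z}{2}-\binom{n}{2}}\prod_i q^{2\binom{c_i}{2}}$ and that the $d$-dependent part separates cleanly into $m$ independent geometric sums.
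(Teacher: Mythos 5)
Your proposal is correct and follows essentially the same route as the paper: substitute the formulas from Propositions \ref{ellOmega-prop} and \ref{FOmega-prop} together with $P_{S_n}(q^2)=[n]_{q^2}!$, pull out the $d$-independent factors, and use the bijection of Lemma \ref{delta-lem} to turn the remaining sum into a product of geometric series. No gaps.
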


 \begin{proof}
 Recall that $P_{S_n}(q^2) = [n]_{q^2}!$ by Theorem \ref{Pfactor-thm} (since the degrees of $S_n$ are $2,3,\dots,n$).
Given this, it follows from Propositions \ref{ellOmega-prop} and \ref{FOmega-prop}
that 
\[
\sum_{\substack{(w,\diamond,K) \\ \mu^-(w) = c}}
  q^{\ell(w)} \cdot \tfrac{ P_{S_n}(q^2)}{F_{K,\diamond}(q)}
  =
  q^{\binom{z}{2}-\binom{n}{2}} \cdot \tfrac{[n]_{q^2}!}{[z]_q!}\cdot
 \prod_{i=1}^m
\frac{q^{2\binom{c_i}{2}}}{[c_i]_{q^2}!}
\cdot
\sum_{\substack{w \in \Imin_n \\ \mu^-(w) = c}}
 \prod_{i=1}^m q^{2b_i(n-b_i) d_i(w) }
\]  
where in the last product on the right we define $d_i(w)$ to  be the $i^{\mathrm{th}}$ element of the sequence $\delta^-(w)$.
In view of Lemma \ref{delta-lem}, it holds that
\[
\sum_{\substack{w \in \Imin_n \\ \mu^-(w) = c}}
 \prod_{i=1}^m q^{2b_i(n-b_i) d_i(w) }
=
 \prod_{i=1}^m \sum_{d =1}^\infty q^{2b_i(n-b_i) d } = \prod_{i=1}^m \frac{q^{2b_i(n-b_i)}}{1-q^{2b_i(n-b_i)}}.
 \]
Substituting the last expression  into  the preceding equation gives the desired formula.
 \end{proof}

 \def\Seq{\Sigma}
 We wish to give a closed formula for the sum of the right side of Lemma \ref{mufixed-lem} over all positive integer sequences $c$ with a fixed sum. It turns out that we may accomplish this by invoking a more general identity, which we prove next.
 Let $x$ and $y$ be indeterminates. 
 Given a sequence of positive integers $c=(c_1,c_2,\dots,c_m)$, define $b_i =(c_1+c_2+\dots+c_i)$ for $i \in [m]$ and set
 \be\label{Pi-eq} \Pi(c;x,y) = \prod_{i=1}^m \( \frac{1}{x^{(b_i)^2} y^{b_i}-1} \cdot \frac{x^{\binom{c_i}{2}}}{[c_i]_x!}\).\ee
 When $c=()$ is the empty sequence, then we set $\Pi(c;x,y) = 1$, following our usual conventions governing empty products. Observe that in this notation, the product on the right side of the equation in Lemma \ref{mufixed-lem} is $  \Pi(c;q^2,q^{-2n}) $.
 
 \begin{proposition}\label{csumfixed-prop} Fix a nonnegative integer $k$ and let $C_{k}$ be the set of all positive integer  sequences $c=(c_1,c_2,\dots,c_m)$, of any length $m \geq 0$,
 such that $c_1+c_2+\dots + c_m=k$. Then
    \[
\sum_{c \in C_k } \Pi(c;x,y) = 
\frac{1}{[k]_x!} \prod_{i=1}^k \frac{1}{x^iy-1} .
 \] 
 \end{proposition}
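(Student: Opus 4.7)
The plan is to prove this by induction on $k$, reducing the statement to a classical $q$-analogue of the binomial theorem.

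The base case $k=0$ is immediate since $C_0 = \{()\}$ contains only the empty sequence, and both sides evaluate to $1$ under our convention that empty products equal $1$.

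For the inductive step, I would partition $C_k$ according to the value of the final part $c_m$. Writing $F_k(x,y) = \sum_{c \in C_k} \Pi(c;x,y)$ and noting that for any $c = (c_1,\dots,c_m) \in C_k$ with $c_m = j$ the truncation $c' = (c_1,\dots,c_{m-1})$ lies in $C_{k-j}$ and satisfies $b_m = k$, the definition \eqref{Pi-eq} factorizes as
\[
\Pi(c;x,y) = \frac{1}{x^{k^2}y^k - 1} \cdot \frac{x^{\binom{j}{2}}}{[j]_x!} \cdot \Pi(c';x,y).
\]
Summing over $j$ and applying the induction hypothesis to $F_{k-j}$ gives
\[
F_k = \frac{1}{x^{k^2}y^k-1}\sum_{j=1}^{k} \frac{x^{\binom{j}{2}}}{[j]_x!\,[k-j]_x!} \prod_{i=1}^{k-j}\frac{1}{x^iy-1}.
\]
Clearing denominators, the desired identity $F_k = \frac{1}{[k]_x!}\prod_{i=1}^k \frac{1}{x^iy-1}$ reduces to showing
\[
x^{k^2}y^k - 1 = \sum_{j=1}^{k}\binom{k}{j}_x x^{\binom{j}{2}} \prod_{i=k-j+1}^{k}(x^iy - 1).
\]

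The key step is to recognize this as a classical $q$-identity after a change of variable. Setting $u = x^k y$, so that $x^iy = x^{i-k}u$, I would rewrite each factor as $x^{-l}u - 1 = x^{-l}(u - x^l)$ for $l = k - i \in \{0,1,\dots,j-1\}$. This produces
\[
\prod_{i=k-j+1}^{k}(x^iy - 1) = x^{-\binom{j}{2}}\prod_{l=0}^{j-1}(u - x^l),
\]
and after cancelling $x^{\binom{j}{2}}$ and moving the $-1$ across as the $j=0$ term, the identity becomes
\[
u^k = \sum_{j=0}^{k} \binom{k}{j}_x \prod_{l=0}^{j-1}(u - x^l).
\]
This is the standard expansion of $u^k$ in the basis of $q$-falling factorials and can be extracted from the Cauchy/Gauss $q$-binomial theorem; alternatively, one verifies it directly by a short secondary induction on $k$ using the Pascal-type recurrence $\binom{k}{j}_x = \binom{k-1}{j-1}_x + x^j \binom{k-1}{j}_x$ together with the telescoping $\prod_{l=0}^{j-1}(u-x^l) = (u - x^{j-1})\prod_{l=0}^{j-2}(u-x^l)$.

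The main obstacle is simply the bookkeeping in the change of variable $u = x^k y$: once one spots that both the $x^{\binom{j}{2}}$ factor and the non-polynomial-looking $x^iy$ factors conspire to produce the clean polynomial identity in $u$, the rest is mechanical. Everything else amounts to standard manipulations with $q$-binomial coefficients.
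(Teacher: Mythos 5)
Your proof is correct and follows essentially the same route as the paper: induction on $k$, splitting $C_k$ by the last part, and reducing to a classical $q$-identity. Your final identity $u^k = \sum_{j=0}^{k}\binom{k}{j}_x\prod_{l=0}^{j-1}(u-x^l)$ is exactly the paper's Lemma \ref{tech-lem} after the substitution $y = u^{-1}$ and multiplication by $u^k$ (since $(u^{-1};x)_j\,u^j = \prod_{l=0}^{j-1}(u-x^l)$), and your secondary induction via the Pascal recurrence is a valid alternative to the paper's recurrence $F_{k+1}(x,y) = (1-y)F_k(x,xy) + yF_k(x,y)$.
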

 
 \begin{remark}
  We consider $C_0$ to be the set with one element given by the unique empty sequence.
 \end{remark}
 
 Before giving the proof of Proposition \ref{csumfixed-prop}, we require a brief technical lemma.
 
 \begin{lemma}\label{tech-lem}
If $k \in \NN$ and $x,y$ are indeterminates then $\sum_{i=0}^k  { k \choose i}_x \cdot (y;x)_{k-i} \cdot y^{i} =1$.
\end{lemma}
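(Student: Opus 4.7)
The plan is to reduce this to an application of the finite $q$-binomial theorem. Recall that the $q$-binomial theorem states
\[
(y;x)_{k-i} = \sum_{j=0}^{k-i} \binom{k-i}{j}_x (-y)^j x^{\binom{j}{2}},
\]
so substituting this into the left-hand side and setting $n = i + j$ yields
\[
\sum_{i=0}^k \binom{k}{i}_x (y;x)_{k-i} y^i
= \sum_{n=0}^k y^n \sum_{i=0}^n \binom{k}{i}_x \binom{k-i}{n-i}_x (-1)^{n-i} x^{\binom{n-i}{2}}.
\]

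Next I would apply the standard trinomial identity for $q$-binomial coefficients, namely $\binom{k}{i}_x \binom{k-i}{n-i}_x = \binom{k}{n}_x \binom{n}{i}_x$, which is immediate from the product formula $\binom{a}{b}_x = [a]_x!/([b]_x![a-b]_x!)$. Pulling $\binom{k}{n}_x$ out of the inner sum and reindexing $j = n - i$ transforms the expression into
\[
\sum_{n=0}^k y^n \binom{k}{n}_x \sum_{j=0}^n \binom{n}{j}_x (-1)^{j} x^{\binom{j}{2}}.
\]

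The inner sum is a second application of the $q$-binomial theorem, this time with parameter $1$ in place of $y$: it equals $(1;x)_n = \prod_{l=0}^{n-1}(1 - x^l)$. But this product contains the factor $(1 - x^0) = 0$ whenever $n \geq 1$, so the inner sum vanishes for all $n \geq 1$. Only the $n = 0$ term survives, contributing $y^0 \binom{k}{0}_x \cdot (1;x)_0 = 1$, which gives the claimed identity. The only genuinely substantive step is recognizing that the two-variable sum collapses via a trinomial rearrangement into a single application of the vanishing identity $(1;x)_n = 0$ for $n \geq 1$; the rest is routine bookkeeping with $q$-binomial coefficients.
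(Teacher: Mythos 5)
Your proof is correct, but it takes a genuinely different route from the paper. The paper's argument is a self-contained induction on $k$: setting $F_k(x,y)=\sum_{i=0}^k\binom{k}{i}_x (y;x)_{k-i}y^i$, it verifies the recurrence $F_{k+1}(x,y)=(1-y)F_k(x,xy)+yF_k(x,y)$ using the Pascal-type identity $x^i\binom{k}{i}_x+\binom{k}{i-1}_x=\binom{k+1}{i}_x$ together with $(1-y)(xy;x)_{k-i}=(y;x)_{k+1-i}$, and then concludes $F_k\equiv 1$ by induction. You instead expand both Pochhammer symbols via the finite $q$-binomial theorem, collapse the double sum with the trinomial identity $\binom{k}{i}_x\binom{k-i}{n-i}_x=\binom{k}{n}_x\binom{n}{i}_x$, and observe that every coefficient of $y^n$ with $n\geq 1$ carries the vanishing factor $(1;x)_n=0$. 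I checked the reindexing: the coefficient of $y^n$ is $\binom{k}{n}_x\sum_{j=0}^n\binom{n}{j}_x(-1)^jx^{\binom{j}{2}}=\binom{k}{n}_x(1;x)_n$, which indeed vanishes for $n\geq 1$ and equals $1$ for $n=0$, so the argument is sound. The trade-off: your approach is non-inductive and explains \emph{why} the identity holds (everything except the constant term is killed by a single vanishing Pochhammer), but it imports the $q$-binomial theorem as a prerequisite; the paper's induction uses only the two elementary one-step identities and is entirely self-contained, at the cost of obscuring the mechanism.
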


\begin{proof}
Define $F_k(x,y) =\sum_{i=0}^k  { k \choose i}_x \cdot (y;x)_{k-i} \cdot y^{i}$.
For $0\leq i \leq k$ it holds that
  $ (1-y)(xy;x)_{k-i} = (y;x)_{k+1-i}$ and $ x^i \tbinom{k}{i}_x + \tbinom{k}{i-1}_x = \tbinom{k+1}{i}_x$.
  Using these identities, it is straightforward to check that
$ F_{k+1}(x,y) =  (1-y) F_k(x,xy) + y F_k(x,y)$, from which 
 it follows   by induction that $F_k(x,y) =1$.
\end{proof}

 \begin{proof}[Proof of Proposition \ref{csumfixed-prop}]
The desired identity  holds trivially when $k=0$. Assume $k>0$ and 
let $C^{(i)}_k$ denote the subset of sequences in $ C_k$ whose last term is $i$.
 Then 
 \[ 
 \sum_{c \in C_k } \Pi(c;x,y) 
 =
 \sum_{i=1}^k \sum_{c \in C_k^{(i)}} \Pi(c;x,y) 
 =
 \frac{1}{x^{k^2}y^k-1}   \sum_{i=1}^k  \( \frac{x^{\binom{i}{2}}}{[i]_x!}  \sum_{c \in C_{k-i}} \Pi(c;x,y) \)
 .\] 
When $i \in [k]$ we may assume by induction that $\sum_{c \in C_{k-i}} \Pi(c;x,y) = \frac{1}{[k-i]_x!} \prod_{i=1}^{k-i} \frac{1}{x^iy-1}$.
 After making this substitution  in the preceding equation and carrying out a few straightforward manipulations, we deduce that 
 \[
   \sum_{c \in C_k } \Pi(c;x,y) =  \(\frac{1}{[k]_x!}   \prod_{i=1}^k \frac{1}{x^iy-1}\) \frac{\Sigma}{z^k-1}
 \]
 where 
$z = x^ky$ and $ \Sigma = \sum_{i=1}^k      \tbinom{k}{i}_x \cdot  (z^{-1};x)_i  \cdot z^i.$
It   remains just  to show  that $\Sigma = z^k-1$,
and 
 for this  we  note that 
 $ \Sigma =
z^k \(\sum_{i=0}^k  \tbinom{ k }{ i}_x  \cdot (z^{-1};x)_{k-i} \cdot z^{-i}\)- 1 
 $
and then invoke Lemma \ref{tech-lem}.
 \end{proof}

 \subsection{Proof of main theorem}\label{last-sect}

Let $(W,S)$ be a finite Weyl group and write $(\tilde W,\tilde S)$ for its associated affine Weyl group, and 
 for each nonnegative integer $k$ define 
 \[
 \Sigma_{\tilde W}(k,q) = 
  \sum_{\substack{(w,\diamond,K) \\ \ellprime (w) = k}}
  q^{\ell(w)} \cdot \tfrac{ P_{W}(q^2)}{F_{K,\diamond}(q)} \in \ZZ[[q]]
\]
where the sum is over  the $(\tilde W, S,\id)$-double coset data $(w,\diamond,K)$ with $\ellprime (w)=\ellinv{\id}(w)=k$.
Observe from \eqref{Tdef} that $ T_{\tilde W}(s,q) =
 \tfrac{P_{\tilde W}(q)}{P_{\tilde W}(q^2)}\cdot  \sum_{ k  \geq 0} \(s\cdot \tfrac{1-q}{1+q}\)^k \cdot 
\Sigma_{\tilde W}(k,q).
$
We  prove the following closed formula for $\Sigma_{\tilde W}(k,q)$ when $\tilde W =\tilde S_n$.
This result  is equivalent to Theorem \ref{main-thm}
since Theorem \ref{bott-thm} implies $ {P_{\tilde S_n}(q)}/{P_{\tilde S_n}(q^2)} = \tfrac{(1+q)^n}{1+q^n}$ and   Proposition \ref{ell*-prop} implies that $\Sigma_{\tilde S_n}(k,q)=0$ if $2k>n$.

 \begin{theorem}\label{last-thm}
 For each positive integer $n$ and nonnegative integer $k \leq \lfloor \frac{n}{2}\rfloor$,
 it holds that
 \[
\Sigma_{\tilde S_n}(k,q)
  = 
  q^{k^2} \cdot \tfrac{[n]_q}{[2n-2k]_q} \cdot \tbinom{n-k}{k}_q \cdot (-q^{k+1};q)_{n-2k} \cdot \(\tfrac{1+q}{1-q} \)^k \cdot \tfrac{1+q^n}{(1+q)^n}
  .\]
  \end{theorem}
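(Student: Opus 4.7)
The plan is to reduce the sum defining $\Sigma_{\tilde S_n}(k,q)$ to a sum of the products $\Pi(c; x, y)$ from \eqref{Pi-eq}, and then apply Proposition \ref{csumfixed-prop} to collapse it to a closed form.

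First, I would exploit Proposition \ref{ellOmega-prop}, which says $\ellprime(w) = \sum_{i=1}^{m} c_i$ whenever $\mu^-(w) = (c_1, c_2, \ldots, c_m)$. Thus the $(w, \diamond, K)$ with $\ellprime(w) = k$ are precisely those for which $\mu^-(w) \in C_k$, and so
\[
\Sigma_{\tilde S_n}(k, q) = \sum_{c \in C_k} \ \sum_{\substack{(w, \diamond, K) \\ \mu^-(w) = c}} q^{\ell(w)} \cdot \tfrac{P_{S_n}(q^2)}{F_{K, \diamond}(q)}.
\]
The inner sum is computed by Lemma \ref{mufixed-lem}, and, crucially, the prefactor $q^{\binom{z}{2} - \binom{n}{2}} \cdot \tfrac{[n]_{q^2}!}{[z]_q!}$ there depends only on $z = n - 2k$, not on the individual entries of $c$; the remaining $c$-dependent product is exactly $\Pi(c; q^2, q^{-2n})$ (a direct check using $x^{b_i^2} y^{b_i} = q^{-2 b_i (n - b_i)}$ when $x = q^2$, $y = q^{-2n}$).

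Next I would invoke Proposition \ref{csumfixed-prop} with $x = q^2$, $y = q^{-2n}$ to obtain
\[
\sum_{c \in C_k} \Pi(c; q^2, q^{-2n}) = \frac{1}{[k]_{q^2}!} \prod_{i=1}^{k} \frac{1}{q^{2(i - n)} - 1}.
\]
Writing $q^{2(i - n)} - 1 = q^{2(i-n)}(1 - q^{2(n-i)})$ lets me clear negative powers of $q$; the cumulative power of $q$ released here, namely $q^{2kn - k^2 - k}$, cancels precisely against the $q^{\binom{n - 2k}{2} - \binom{n}{2}} = q^{-2kn + 2k^2 + k}$ prefactor up to the desired $q^{k^2}$. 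The denominator $\prod_{i=1}^k (1 - q^{2(n-i)})$ simplifies to $(1 - q^2)^k \cdot [n-1]_{q^2}! / [n-k-1]_{q^2}!$, and combining with $[n]_{q^2}!/[n-1]_{q^2}! = [n]_{q^2}$ produces the intermediate identity
\[
\Sigma_{\tilde S_n}(k, q) = q^{k^2} \cdot \frac{[n]_{q^2} \cdot [n - k - 1]_{q^2}!}{[n - 2k]_q! \cdot [k]_{q^2}! \cdot (1 - q^2)^k}.
\]

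The remaining task, which I expect to be the most fiddly part of the argument, is to convert this to the form in the theorem, which mixes $q$-integers, $q$-binomials, and the $q$-Pochhammer $(-q^{k+1}; q)_{n - 2k}$. The key identities are $[m]_{q^2} = [m]_q \cdot (1 + q^m)/(1 + q)$, giving $[m]_{q^2}! = [m]_q! \cdot (-q; q)_m / (1 + q)^m$, together with $(-q^{k+1}; q)_{n - 2k} = (-q; q)_{n-k} / (-q; q)_k$ and $1 - q^2 = (1 - q)(1 + q)$. Carefully accounting for powers of $(1 + q)$ (yielding total exponent $k - n$) converts the intermediate expression into
\[
q^{k^2} \cdot \frac{[n]_q (1 + q^n) [n - k - 1]_q! (-q; q)_{n - k - 1}}{[n - 2k]_q! [k]_q! (-q; q)_k (1 - q)^k (1 + q)^{n - k}},
\]
and the target expression reduces to the same thing after applying $[2(n-k)]_q = [n-k]_q (1 + q^{n-k})$, which provides precisely the factor needed to pass from $[n-k-1]_q! (-q;q)_{n-k-1}$ to $[n-k]_q! (-q;q)_{n-k} / [2n - 2k]_q$. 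Finally, I would note that Theorem \ref{last-thm} is equivalent to Theorem \ref{main-thm} by multiplying through by $\sum_k (s \cdot \tfrac{1-q}{1+q})^k$ and using Bott's formula $P_{\tilde S_n}(q)/P_{\tilde S_n}(q^2) = (1+q)^n/(1+q^n)$ together with the vanishing $\Sigma_{\tilde S_n}(k, q) = 0$ for $2k > n$ guaranteed by Proposition \ref{ell*-prop}.
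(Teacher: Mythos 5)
Your proposal is correct and follows essentially the same route as the paper: both decompose $\Sigma_{\tilde S_n}(k,q)$ over the fibers $\mu^-(w)=c$ via Proposition \ref{ellOmega-prop} and Lemma \ref{mufixed-lem}, collapse the sum over $c\in C_k$ with Proposition \ref{csumfixed-prop} at $x=q^2$, $y=q^{-2n}$, and then perform the same $q$-arithmetic (the paper packages it as four displayed identities, you group it slightly differently, but the computations agree, e.g.\ your exponent bookkeeping $q^{-2kn+2k^2+k}\cdot q^{2kn-k^2-k}=q^{k^2}$ matches the paper's identity (ii)). Your intermediate expression and its conversion to the stated form check out, so there is nothing to flag.
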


\begin{proof}
On noting the formula for $\ellprime (w)$  in Proposition \ref{ellOmega-prop}, we deduce from Lemma \ref{mufixed-lem}
that 
\be\label{Ipenum-eq}
\Sigma_{\tilde S_n}(k,q)
  =
 q^{\binom{n-2k}{2}-\binom{n}{2}} \cdot \tfrac{[n]_{q^2}!}{[n-2k]_q!}\cdot \sum_{c \in C_k} \Pi(c;q^2,q^{-2n})
 \ee
 with $C_k$ as in Proposition \ref{csumfixed-prop} and $\Pi(c;x,y)$  as in \eqref{Pi-eq}.
We now observe that the following identities  hold (for all integers $0\leq k \leq \lfloor \tfrac{n}{2}\rfloor$):
\ben
\item[(i)] $\sum_{c \in C_k} \Pi(c;q^2,q^{-2n}) = \tfrac{1}{ [k]_{q^2}!} \cdot \prod_{i=1}^k \frac{q^{2(n-i)}}{1-q^{2(n-i)}}.$

\item[(ii)] $q^{\binom{n-2k}{2}-\binom{n}{2}} \prod _{i=1}^k q^{2(n-i)} = q^{k^2}$.

\item[(iii)] $ [n]_{q^2}! \cdot  \prod_{i=1}^k \frac{1}{1-q^{2(n-i)}}
=
\frac{[n]_q}{[2n-2k]_q} \cdot  [n-k]_{q^2}! \cdot \frac{1+q^n}{(1-q^2)^k}
$.

\item[(iv)] $ \frac{ [n-k]_{q^2}!}{[n-2k]_q!} \cdot \frac{1}{ [k]_{q^2}!}
=
 \binom{n-k}{k}_q \cdot  (-q^{k+1};q)_{n-2k} \cdot (1+q)^{2k-n}$.
\een
Only the first of these equations (which follows from Proposition \ref{csumfixed-prop}) is nontrivial;
the rest derive straightforwardly from the standard definitions of the various $q$-numbers 
 given before Theorem \ref{main-thm}.
Applying these substitutions to \eqref{Ipenum-eq} produces the desired formula.
 \end{proof}


\begin{thebibliography}{99}

\bibitem{AI} W. A. Al-Salam and M. E. H. Ismail, Orthogonal polynomials associated with the Rogers- Ramanujan continued fraction, \emph{Pacific J. Math.} \textbf{104} (1983), 269--283.

\bibitem{AA} M. Atakishiyeva and N. Atakishiyev, On discrete $q$-extensions of Chebyshev polynomials, \emph{Commun. Math. Anal.} \textbf{14} (2013), 1--12.


\bibitem{Bott} R. Bott, An application of the Morse theory to the topology of Lie-groups, \emph{Bull. Soc. Math. France} \textbf{84} (1956), 251--281.

\bibitem{Carter} R. W. Carter, \emph{Finite Groups of Lie Type: Conjugacy Classes and Complex Characters}, Pure and Applied Mathematics, Wiley Interscience, Chichester, New York, 1985.



\bibitem{Chevalley3} C. Chevalley, Sur certains groupes simples, \emph{Tohoku Math. J.} \textbf{7} (1955), 14--66.


\bibitem{Cigler1} J. Cigler, A simple approach to $q$-Chebyshev polynomials, preprint (2012), {\tt arXiv:1201.4703}.

\bibitem{Cigler2} J. Cigler,  $q$-Chebyshev polynomials, preprint (2012), {\tt arXiv:1205.5383}.

\bibitem{Cigler3} J. Cigler,  Some remarks about $q$-Chebyshev polynomials and $q$-Catalan numbers and related results, preprint (2013), {\tt arXiv:1312.2767}.




\bibitem{Dyer} M. J. Dyer, On minimal lengths of expressions of Coxeter group elements as products of reflections, \emph{Proc. Amer. Math. Soc.} \textbf{129} (2001), 2591--2595.



\bibitem{Eriksson} H. Eriksson and K. Eriksson, Affine Weyl groups as infinite permutations, \emph{Electron. J. Comb.} \textbf{5} (1998), Research Paper 18. 

\bibitem{GR} G. Gasper and M. Rahman, \emph{Basic hypergeometric series}, 2nd ed., Cambridge University Press, 2004.

 
\bibitem{GP} M. Geck and G. Pfeiffer, \emph{Characters of finite Coxeter groups and Iwahori-Hecke algebras}, Oxford University Press, 2000.
 
\bibitem{HW} G. H. Hardy and E. M. Wright, \emph{An Introduction to the Theory of Numbers}, Fifth edition, Oxford University Press, New York, 1979.
 
 
 \bibitem{H1} A. Hultman, Fixed points of involutive automorphisms of the Bruhat order, \emph{Adv. Math.} \textbf{195} (2005), 283--296.




\bibitem{Hu} J. E. Humphreys, \emph{Reflection groups and Coxeter groups}, Cambridge University Press, Cambridge, 1990.


\bibitem{Incitti1} F. Incitti, The Bruhat Order on the Involutions of the Symmetric Group, \emph{J. Algebr. Combin.} \textbf{20} (2004), 243--261.

\bibitem{Incitti2} F. Incitti, Bruhat order on the involutions of classical Weyl groups, \emph{Adv. Appl. Math.} \textbf{37} (2006), 68--111.

\bibitem{IPS} M. E. H. Ismail, H. Prodinger, and D. Stanton, Schur's determinants and partition theorems, \emph{S\'em. Lothar. Combin.} (2000), B44a.




\bibitem{Koornwinder} T. H. Koornwinder, Additions to the formula lists in ``Hypergeometric orthogonal polynomials and their $q$-analogues'' by Koekoek, Lesky and Swarttouw, available online at \url{https://staff.fnwi.uva.nl/t.h.koornwinder/art/informal/KLSadd.pdf}.



\bibitem{Lu} G. Lusztig, \emph{Hecke algebras with unequal parameters}, {CRM Monograph Ser. 18}, Amer. Math. Soc., 2003.

\bibitem{LV2} G. Lusztig, A bar operator for involutions in a Coxeter group, \emph{Bull. Inst. Math. Acad. Sinica (N.S.)} \textbf{7} (2012), 355--404.



\bibitem{L6} G. Lusztig, Some power series involving involutions in Coxeter groups, preprint (2014) {\tt arXiv:1411.3233}.


\bibitem{MacDonald} I. G. Macdonald, The Poincar\'e Series of a Coxeter Group, \emph{Math. Ann.} \textbf{199} (1972), 161--174.



\bibitem{Reiner} V. Reiner, Notes on Poincar\'e series of finite and affine Coxeter groups, available online at  \url{http://www.math.umn.edu/~reiner/Papers/SteinbergNotes.pdf}.



\bibitem{Solomon3} L. Solomon, The orders of the finite Chevalley groups, \emph{J. Algebra} \textbf{3} (1966), 376--393.

\bibitem{Solomon4} L. Solomon, A decomposition of the group algebra of a finite Coxeter group, \emph{J. Algebra} \textbf{9} (1968), 220--239.

\bibitem{S} T. A. Springer, Some results on algebraic groups with involutions, \emph{Advanced Studies in Pure Math.} \textbf{6}, 525--543, Kinokuniya/North-Holland, 1985.


\bibitem{Steinberg} R. Steinberg, Endomorphisms of linear algebraic groups, \emph{Memoirs of the Amer. Math. Soc. 80}, 1968.


\bibitem{Weisstein} Eric W. Weisstein, ``Chebyshev Polynomial of the First Kind,'' from MathWorld--A Wolfram Web Resource, available online at
\url{http://mathworld.wolfram.com/ChebyshevPolynomialoftheFirstKind.html}.

\end{thebibliography}
\end{document}